\providecommand{\U}[1]{\protect\rule{.1in}{.1in}}
\newtheorem{theorem}{Theorem}
\theoremstyle{plain}
\newtheorem{corollary}{Corollary}
\newtheorem{lemma}{Lemma}
\newtheorem{proposition}{Proposition}
\newtheorem{remark}{Remark}
\numberwithin{equation}{section}
\begin{document}
\title{Functional inequalities in the framework of Banach spaces}
\author{Constantin P. Niculescu}
\address{Department of Mathematics, University of Craiova, Craiova 200585, Romania}
\email{constantin.p.niculescu@gmail.com}
\date{August 15, 2024}
\subjclass[2020]{Primary 26A51, 39B62 46 B20; Secondary 26D15}
\keywords{Hanner's inequalities; optimal $2$-uniform convexity inequalities; higher
order convexity; majorization; von Neumann-Jordan constant; quadruple
functional inequalities.}

\begin{abstract}
A quadrilateral inequality established by Schötz \cite{Sch} in the context of
Hilbert spaces is extended to the framework of Banach spaces. Our approach is
based on the theory of majorization and a substitute for the parallelogram law
associated with Clarkson's notion of von Neumann-Jordan constant. As a
by-product, several functional inequalities that extend classical inequalities
from linear algebra and geometry of Banach spaces are also obtained.

\end{abstract}
\maketitle

\section{Introduction}

This paper aims to illustrate the usefulness of majorization theory and
higher-order convexity theory in generalizing several classical inequalities
as functional inequalities. This includes results due to Ball, Carlen, and
Lieb \cite{BCL1994}, Hanner \cite{Han56}, Popoviciu \cite{Pop1965} and Zhang
\cite{Zhang}.

Our starting point was a recent paper by Schötz \cite{Sch}, that reveals the
connection between a class of higher-order convex functions and the following
quadruple inequality,%
\begin{equation}
\left\Vert p-x\right\Vert +\left\Vert y-z\right\Vert \leq\left\Vert
p-y\right\Vert +\left\Vert x-z\right\Vert +\left\Vert p-z\right\Vert
+\left\Vert x-y\right\Vert , \label{Sch_ineq}%
\end{equation}
that occurs in any normed vector space. To see that (\ref{Sch_ineq}) indeed
works, add side by side the following two inequalities that result from the
triangle inequality,%
\[
2\left\Vert p-x\right\Vert \leq\left\Vert p-y\right\Vert +\left\Vert
p-z\right\Vert +\left\Vert y-x\right\Vert +\left\Vert z-x\right\Vert
\]
and%
\[
2\left\Vert y-z\right\Vert \leq\left\Vert y\right\Vert +\left\Vert
z\right\Vert +\left\Vert y-x\right\Vert +\left\Vert z-x\right\Vert .
\]

When dealing with inner product spaces, we also have
\begin{align*}
\left\Vert y-z\right\Vert ^{2}-\left\Vert p-y\right\Vert ^{2}-\left\Vert
x-z\right\Vert ^{2}+\left\Vert p-x\right\Vert ^{2}  &  =2\langle
z-p,x-y\rangle\\
&  \leq2\left\Vert p-z\right\Vert \left\Vert x-y\right\Vert \\
&  \leq\left\Vert p-z\right\Vert ^{2}+\left\Vert x-y\right\Vert ^{2},
\end{align*}
so the inequality (\ref{Sch_ineq}) remains valid by squaring the norms. Schötz
was able to single out an entire class of functions $f:[0,\infty
)\rightarrow\lbrack0,\infty)$ that extend this inequality by replacing
$\left\Vert \cdot\right\Vert $ with $f\left(  \left\Vert \cdot\right\Vert
\right)  $:

\begin{theorem}
\emph{(Schötz \cite{Sch}, Theorem }$3$\emph{)}\label{thm1} Let $x,y,z,p$ be
four points in an inner product space $V$ and let $f:[0,\infty)\rightarrow
\lbrack0,\infty)$ be a nondecreasing, convex, and differentiable function such
that $f(0)=0$ and $f^{\prime}$ is concave. Then%
\begin{multline*}
f\left(  \left\Vert p-x\right\Vert \right)  +f\left(  \left\Vert
y-z\right\Vert \right) \\
\leq f\left(  \left\Vert p-y\right\Vert \right)  +f\left(  \left\Vert
x-z\right\Vert \right)  +f\left(  \left\Vert p-z\right\Vert \right)  +f\left(
\left\Vert x-y\right\Vert \right)  .
\end{multline*}

\end{theorem}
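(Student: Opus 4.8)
The plan is to recast the inequality in a symmetric form by a linear change of variables, then decompose $f$ into three explicit ``building blocks'' via an integral representation, and finally treat each block separately; only the last block needs a genuinely new idea.

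\emph{Step 1: reparametrisation.} Put
\[
v_1=p-y,\qquad v_2=x-z,\qquad v_3=p-z,\qquad v_4=x-y ,
\]
so that $v_1+v_2=v_3+v_4$ (both equal $p+x-y-z$); conversely any four vectors with this property arise from four points of $V$ after a translation (take $y=0$, $p=v_1$, $x=v_4$, $z=v_1-v_3$). Writing $c=\tfrac12(v_1+v_2)$, $a=\tfrac12(v_1-v_2)$, $b=\tfrac12(v_3-v_4)$ we get $v_1=c+a$, $v_2=c-a$, $v_3=c+b$, $v_4=c-b$, while $p-x=v_1-v_4=a+b$ and $y-z=v_3-v_1=b-a$. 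Hence, with $h=f(\|\cdot\|)$, the statement of the theorem is equivalent to
\[
h(a+b)+h(a-b)\le h(c+a)+h(c-a)+h(c+b)+h(c-b)\qquad(a,b,c\in V).
\]

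\emph{Step 2: integral representation of $f$.} Since $f$ is convex and nondecreasing with $f(0)=0$ and $f'$ is concave, the function $f''$ (which exists a.e., is nonnegative because $f$ is convex, and nonincreasing because $f'$ is concave) can be written as $f''(t)=c_\infty+\mu\big((t,\infty)\big)$ for a constant $c_\infty\ge0$ and a positive Borel measure $\mu$ on $(0,\infty)$. Integrating twice (using Fubini's theorem and $f(0)=0$) yields
\[
f(t)=f'(0)\,t+\tfrac{c_\infty}{2}\,t^2+\int_0^\infty \psi_\rho(t)\,d\mu(\rho),\qquad
\psi_\rho(t)=\int_0^t\min(u,\rho)\,du ,
\]
where $\psi_\rho$ is the Huber function ($\psi_\rho(t)=\tfrac12 t^2$ for $t\le\rho$, $\psi_\rho(t)=\rho t-\tfrac12\rho^2$ for $t\ge\rho$) and $f'(0)\ge0$. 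Since the inequality of Step 1 is linear in $h$ with nonnegative coefficients $f'(0)$, $c_\infty/2$, $d\mu$, it suffices to prove it for $h=\|\cdot\|$, for $h=\|\cdot\|^2$, and for $h=\psi_\rho(\|\cdot\|)$ with $\rho>0$.

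\emph{Step 3: the three blocks.} For $h=\|\cdot\|$ and $h=\|\cdot\|^2$ the required inequality is precisely what the Introduction records (the sum of two triangle inequalities, and the identity for $\|y-z\|^2-\|p-y\|^2-\|x-z\|^2+\|p-x\|^2$ together with $2\langle z-p,x-y\rangle\le\|p-z\|^2+\|x-y\|^2$). For the Huber block I would use the dual description, valid in any inner product space,
\[
2\,\psi_\rho(\|v\|)=\sup_{\|w\|\le\rho}\big(2\langle v,w\rangle-\|w\|^2\big),
\]
the supremum being attained at $w=v$ (if $\|v\|\le\rho$) or $w=\rho\,v/\|v\|$ (if $\|v\|>\rho$). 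Pick $w_1,w_2\in V$ with $\|w_i\|\le\rho$ realising the suprema for $v=a+b$ and $v=a-b$, and in the four terms on the right feed the test vectors $\tfrac12(w_1+w_2)$, $-\tfrac12(w_1+w_2)$, $\tfrac12(w_1-w_2)$, $-\tfrac12(w_1-w_2)$, all of norm $\le\rho$ by the triangle inequality. Adding the four resulting lower bounds, the contributions containing $c$ cancel in pairs, the linear contributions reassemble into $2\langle a+b,w_1\rangle+2\langle a-b,w_2\rangle$, and the quadratic contributions sum to $\tfrac12\|w_1+w_2\|^2+\tfrac12\|w_1-w_2\|^2=\|w_1\|^2+\|w_2\|^2$ by the parallelogram law; the total is exactly $2\psi_\rho(\|a+b\|)+2\psi_\rho(\|a-b\|)$, which is the desired bound.

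\emph{Main obstacle.} Steps 1 and 2 are routine: a change of variables, and the classical representation of a convex function with concave derivative as a superposition of Huber functions together with a linear and a quadratic term. The heart of the argument is Step 3, and there the only non-mechanical point is spotting the dual formula for $\psi_\rho(\|\cdot\|)$ and the symmetric choice $\pm\tfrac12(w_1\pm w_2)$ of dual vectors — this is where the parallelogram law enters and makes the otherwise lossy convexity estimates close up to an equality. In a general Banach space this exact cancellation is no longer available, which is presumably where a quantitative substitute such as the von Neumann--Jordan constant has to be paid.
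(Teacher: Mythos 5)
Your proof is correct, and it takes a genuinely different route from the paper's. The paper obtains Theorem \ref{thm1} as the case $N(X)=2$ of Theorem \ref{thm_Schotz_gen}: it first proves the midpoint inequality $f(\|u\|)+f(\|v\|)\le 2f\bigl(\|\tfrac{u-v}{2}\|\bigr)+2f\bigl(\|\tfrac{u+v}{2}\|\bigr)$ by applying truncated majorization (Corollary \ref{corHLP2}) to the concave function $f\circ\sqrt{\cdot}$ (Theorem \ref{thm-comp}) together with the parallelogram law, and then uses convexity of $f\circ\|\cdot\|$ to pass to the quadruple form. You instead linearize in $f$: the representation $f(t)=f'(0)t+\tfrac{c_\infty}{2}t^2+\int_0^\infty\psi_\rho(t)\,d\mu(\rho)$ is legitimate because $f''$ is nonnegative and nonincreasing (and $f'$, being the derivative of a differentiable convex function, is automatically continuous, so the double integration and Fubini step go through), and it reduces the claim to the three extremal blocks $t$, $t^2$, $\psi_\rho$. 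The first two are the triangle-inequality and parallelogram-law computations already recorded in the Introduction, and the Huber block closes exactly: with test vectors $t_1=-t_2=\tfrac12(w_1+w_2)$, $t_3=-t_4=\tfrac12(w_1-w_2)$ one gets $t_1+t_2+t_3+t_4=0$, $t_1-t_2=w_1+w_2$, $t_3-t_4=w_1-w_2$ and $\sum_i\|t_i\|^2=\|w_1\|^2+\|w_2\|^2$ by the parallelogram law, so the four lower bounds sum to precisely $2\psi_\rho(\|a+b\|)+2\psi_\rho(\|a-b\|)$. What each approach buys: yours identifies the extreme rays of the cone $\mathcal{S}_0$ and yields the sharp constant $1$ with no majorization machinery, making transparent that the only Hilbertian input is the parallelogram identity inside the duality step; the paper's majorization route sacrifices sharpness (the integer rounding in $N(X)$ and the factor $N(X)/2$) but survives the passage to general Banach spaces, where, as you correctly anticipate, your exact cancellation is unavailable and must be replaced by the von Neumann--Jordan substitute (\ref{genParLaw}).
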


For convenience, we will denote by $\mathcal{S}$ the set of all functions
$f:[0,\infty)\rightarrow\lbrack0,\infty)$ which are nondecreasing, convex and
differentiable, and have concave derivatives and by $\mathcal{S}_{0}$ the set
of functions in $\mathcal{S}$ which vanishes at the origin. Some few examples
of functions belonging to $\mathcal{S}_{0}$ are%
\begin{gather*}
\left(  1+\alpha x^{2}\right)  ^{1/2}-1\text{\quad}(\text{for }\alpha
>0),\text{ }\\
x^{\alpha}\text{\quad(for }\alpha\in\lbrack1,2]),\quad x\log\left(
x+1\right)  ,\text{ and }\log(\cosh x).
\end{gather*}

The aim of the present paper is to gain more insight into this matter by
noticing that the functions considered by Schötz belong to the subject of
higher order convexity (briefly summarized in Section 2). Indeed, the set
$\mathcal{S}$ coincides with the set of all functions $f:[0,\infty
)\rightarrow\mathbb{R}$ which are nondecreasing, convex and $3$-concave in the
sense of E. Hopf \cite{Hopf} and T. Popoviciu \cite{Pop34}, \cite{Pop44}. See
Theorem \ref{thmH_P}, Section 2. Combining this fact with Popoviciu's
approximation theorem (Theorem \ref{thm_Pop35} in Section 2) we may reduce the
reasoning with functions in $\mathcal{S}$ to the case where they are also
three times continuously differentiable.

In Section 3 we establish the "truncated form" of the two basic results of
majorization theory, the Hardy-Littlewood-Pólya theorem and the Tomi\'{c}-Weyl
theorem on weak majorization. Their main advantage is the possibility to
handle pairs of finite sequences of numbers, of different sizes. See Theorem
\ref{thm_convex_trunc_maj} and Theorem \ref{thm_concave_trunc_maj}. Unlike
Sherman's theorem of majorization (see \cite{NP2024}) our results do not make
explicit use of the stochastic matrices.

The applications proved in Section 4 represent functional generalization of
the determinantal inequalities of Zhang \cite{Zhang} and Popoviciu, of the
optimal $2$-uniform convexity inequality for $L^{p}$ spaces (with $p\in(1,2])$
and of Hanner's inequality. See respectively Theorem \ref{thm_Zhang}, Theorem
\ref{thm-extension of 2-unif cv}, Theorem \ref{thm_Hanner p<=2} and
\ref{thm_Hanner p>=2}. The $2$-uniform convexity functional inequality for
$p\in\lbrack2,\infty)$ makes the objective of Theorem
\ref{thm-extension of 2-unif cv p>=2} in Section 6.

The Section 5 is devoted to a quick presentation of a basic ingredient
necessary in our extension of Theorem 1 to the general context of Banach
spaces: the von Neumann-Jordan constant. According to Clarkson \cite{Cl1937},
the \emph{von Neumann-Jordan constant} $C_{NJ}(X)$ of a Banach space $X$ is
defined by the formula
\[
C_{NJ}(X)=\sup\left\{  \frac{\left\Vert u+v\right\Vert ^{2}+\left\Vert
u-v\right\Vert ^{2}}{2\left\Vert u\right\Vert ^{2}+2\left\Vert v\right\Vert
^{2}}:u,v\in X\text{ and }\left\Vert u\right\Vert +\left\Vert v\right\Vert
\neq0\right\}  .
\]
He noticed that $C_{NJ}(X)\in\lbrack1,2]$ and the equality $C_{NJ}(X)=1$
characterizes inner product spaces.

The \emph{von Neumann-Jordan constant }makes possible the following substitute
of the parallelogram law in any Banach space $X:$%
\[
\left\Vert x+y\right\Vert ^{2}+\left\Vert x-y\right\Vert ^{2}\leq
2C_{NJ}(X)\left(  \left\Vert x\right\Vert ^{2}+\left\Vert y\right\Vert
^{2}\right)  ,\text{\quad for all }x,y\in X.
\]

A functional inequality generalizing this fact makes the objective of Theorem
\ref{thm_functional-Par-law}. This theorem is used in Section 6 to prove the
following generalization of Theorem 1:

\begin{theorem}
\label{thm_main}Let $y,z,q,r$ be four points in the Banach space $X$ and let
$f$ be a nondecreasing, convex and $3$-concave function defined on
$[0,\infty)$ such that $f(0)=0.$ Then%
\begin{multline*}
f\left(  \left\Vert y-q\right\Vert \right)  +f\left(  \left\Vert
z-r\right\Vert \right) \\
\leq\frac{N(X)}{2}\left\{  f\left(  \left\Vert y-z\right\Vert \right)
+f\left(  \left\Vert r-q\right\Vert \right)  \right.  \left.  +f\left(
\left\Vert z-q\right\Vert \right)  +f\left(  \left\Vert y-r\right\Vert
\right)  \right\}  .
\end{multline*}

Here $N(X)=2C_{NJ}(X)$ if $2C_{NJ}(X)$ is an integer and $N(X)=4$ otherwise.
\end{theorem}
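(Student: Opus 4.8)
My plan is to reduce the statement, by elementary vector bookkeeping, to a single application of the functional parallelogram-law inequality (Theorem~\ref{thm_functional-Par-law}), and then to treat that inequality as the genuine content of the result; the rest is only monotonicity and convexity of $f$.

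For the reduction, write $P=y-q$ and $Q=z-r$ for the two vectors whose norms occur on the left, and set
\[
a=y-z,\qquad b=r-q,\qquad c=z-q,\qquad d=y-r,
\]
so that $\|a\|,\|b\|,\|c\|,\|d\|$ are the four norms on the right. A one-line computation gives $a+b=(y-q)-(z-r)$ and $c+d=(y-q)+(z-r)$, whence
\[
y-q=\tfrac12\bigl[(a+b)+(c+d)\bigr],\qquad z-r=\tfrac12\bigl[(c+d)-(a+b)\bigr].
\]
Hence, putting $u=\tfrac12(a+b)$ and $v=\tfrac12(c+d)$, we have $P=v+u$, $Q=v-u$, and so $f(\|y-q\|)+f(\|z-r\|)=f(\|v+u\|)+f(\|v-u\|)$.

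Next I would invoke Theorem~\ref{thm_functional-Par-law}, in the form that extends $\|v+u\|^{2}+\|v-u\|^{2}\le 2C_{NJ}(X)\bigl(\|u\|^{2}+\|v\|^{2}\bigr)$ from $\|\cdot\|$ to $f(\|\cdot\|)$, namely $f(\|v+u\|)+f(\|v-u\|)\le N(X)\bigl(f(\|u\|)+f(\|v\|)\bigr)$; this is the single place where the constant $N(X)$, and the alternative ``$2C_{NJ}(X)$ an integer or not'', enter. It then suffices to bound $f(\|u\|)$ and $f(\|v\|)$. Since $f$ is nondecreasing and $\|a+b\|\le\|a\|+\|b\|$, we have $f(\|u\|)=f\bigl(\tfrac12\|a+b\|\bigr)\le f\bigl(\tfrac12(\|a\|+\|b\|)\bigr)$, and since $f$ is convex, $f\bigl(\tfrac12(\|a\|+\|b\|)\bigr)\le\tfrac12\bigl(f(\|a\|)+f(\|b\|)\bigr)$; treating $v$, $c$, $d$ the same way and substituting back yields exactly $\tfrac12 N(X)\bigl(f(\|a\|)+f(\|b\|)+f(\|c\|)+f(\|d\|)\bigr)$, which is the assertion. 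When $X$ is a Hilbert space $N(X)=2$ and one recovers Theorem~\ref{thm1}. Note that this reduction uses only monotonicity and convexity of $f$; the $3$-concavity hypothesis is consumed entirely inside Theorem~\ref{thm_functional-Par-law}.

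So that theorem is the main obstacle. Two applications of the triangle inequality together with $f(2t)\le 4f(t)$ -- a consequence of $3$-concavity -- already prove it with $N(X)$ replaced by the always-admissible value $4$, which disposes of the non-integer case; the delicate point is lowering the constant to $2C_{NJ}(X)$ when that number is an integer. For this I would reduce, by Popoviciu's approximation theorem (Theorem~\ref{thm_Pop35}), to $f\in\mathcal{S}_{0}$ three times continuously differentiable; pass, via Theorem~\ref{thmH_P}, to the nondecreasing concave function $t\mapsto f(\sqrt{t})$; and then derive $f(\|v+u\|)+f(\|v-u\|)\le N(X)\bigl(f(\|u\|)+f(\|v\|)\bigr)$ from the truncated Tomi\'{c}--Weyl theorem (Theorem~\ref{thm_concave_trunc_maj}), comparing the two-term sequence $\bigl(\|v+u\|^{2},\|v-u\|^{2}\bigr)$ with the $N(X)$-fold repetition of $\bigl(\|u\|^{2},\|v\|^{2}\bigr)$; here the bound $\|v+u\|^{2}+\|v-u\|^{2}\le N(X)\bigl(\|u\|^{2}+\|v\|^{2}\bigr)$, valid since $N(X)\ge 2C_{NJ}(X)$, furnishes the quantitative input, and the integrality of $N(X)$ is exactly what makes the comparison sequence -- hence the weak majorization -- available. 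The hard point throughout is that mere comparison of the totals does not suffice: one genuinely needs the truncated majorization, which is precisely why the results of Section~3, rather than the classical Hardy--Littlewood--P\'olya and Tomi\'{c}--Weyl theorems, are required.
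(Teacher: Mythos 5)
Your proposal is correct and follows essentially the same route as the paper: the substitution $u=\tfrac12(a+b)$, $v=\tfrac12(c+d)$ is just a reparametrization of the paper's change of variables $x=z-q$, $u=q-y$, $v=r-z$, and your convexity step $f\bigl(\|\tfrac{a+b}{2}\|\bigr)\le\tfrac12\bigl(f(\|a\|)+f(\|b\|)\bigr)$ reproduces exactly how the paper derives the second inequality of Theorem~\ref{thm_functional-Par-law} from the first. Your sketch of Theorem~\ref{thm_functional-Par-law} itself (truncated majorization for $f\circ\sqrt{\cdot}$ against $N(X)$-fold repetitions, with integrality of $N(X)$ enabling the comparison) likewise matches the paper's argument via Corollary~\ref{corHLP2}.
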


Since $N(X)=2$ if $X$ is an inner product space, in this case Theorem
\ref{thm_main} reduces to Theorem \ref{thm1}.

\section{Some basic facts concerning the $n$-convex functions $(n\leq3)$}

Higher order convexity was introduced by Hopf \cite{Hopf} and Popoviciu
\cite{Pop34}, \cite{Pop44}, who defined it in terms of divided differences.
Assuming that $f$ is a real-valued function defined on an interval $I,$ its
divided differences of order $0,1,\ldots,n$ associated to a family
$x_{0},x_{1},\ldots,x_{n}$ of $n+1$ distinct points are respectively defined
by the formulas:
\begin{align*}
\lbrack x_{0};f]  &  =f(x_{0})\\
\lbrack x_{0},x_{1};f]  &  =\frac{f(x_{1})-f(x_{0})}{x_{1}-x_{0}}\\
&  ...\\
\lbrack x_{0},x_{1},...,x_{n};f]  &  =\frac{[x_{1},x_{2},...,x_{n}%
;f]-[x_{0},x_{1},...,x_{n-1};f]}{x_{n}-x_{0}}\\
&  =%
{\displaystyle\sum\limits_{j=0}^{n}}
\frac{f(x_{j})}{\prod\nolimits_{k\neq j}\left(  x_{j}-x_{k}\right)  }.
\end{align*}

Notice that all these divided differences are invariant under the permutation
of points $x_{0},x_{1},...,x_{n}.$ As a consequence, we may always assume that
$x_{0}<x_{1}<\cdots<x_{n}.$

A function $f$ is called $n$-\emph{convex }(respectively\emph{\ }%
$n$-\emph{concave}) if all divided differences $[x_{0},x_{1},\ldots,x_{n};f]$
are nonnegative (respectively nonpositive). In particular,

\begin{itemize}
\item the convex functions of order 0 are precisely the nonnegative functions;

\item the convex functions of order 1 are the nondecreasing functions;

\item the convex functions of order 2 are nothing but the usual convex
functions since in this case for all $x_{0}<x_{1}<x_{2}$ in $I,$%
\[
\lbrack x_{0},x_{1},x_{2};f]=\frac{\frac{f(x_{0})-f(x_{1})}{x_{0}-x_{1}}%
-\frac{f(x_{1})-f(x_{2})}{x_{1}-x_{2}}}{x_{0}-x_{2}}\geq0,
\]
that is,
\[
\left(  x_{2}-x_{0}\right)  f(x_{1})\leq\left(  x_{2}-x_{1}\right)
f(x_{0})+\left(  x_{1}-x_{0}\right)  f(x_{2}).
\]

\end{itemize}

The description of 3-convex functions (as well as of the higher order convex
functions) in terms of divided differences is rather intricate. For example, a
function $f$ is $3$-\emph{convex} if for every quadruple $x_{0}<x_{1}%
<x_{2}<x_{3}$ of elements we have%
\begin{multline*}
\lbrack x_{0},x_{1},x_{2},x_{3};f]=\frac{f(x_{0})}{(x_{0}-x_{1})(x_{0}%
-x_{2})(x_{0}-x_{3})}-\frac{f(x_{1})}{(x_{0}-x_{1})(x_{1}-x_{2})(x_{1}-x_{3}%
)}\\
+\frac{f(x_{2})}{(x_{0}-x_{2})(x_{1}-x_{2})(x_{2}-x_{3})}-\frac{f(x_{3}%
)}{(x_{0}-x_{3})(x_{1}-x_{3})(x_{2}-x_{3})}\geq0,
\end{multline*}
equivalently,%
\begin{multline*}
(x_{2}-x_{0})(x_{3}-x_{0})(x_{3}-x_{2})f(x_{1})+(x_{1}-x_{0})(x_{2}%
-x_{0})(x_{2}-x_{1})f(x_{3})\\
\geq(x_{2}-x_{1})(x_{3}-x_{1})(x_{3}-x_{2})f(x_{0})+(x_{1}-x_{0})(x_{3}%
-x_{0})(x_{3}-x_{1})f(x_{2}).
\end{multline*}
When the points $x_{0},x_{1},x_{2},x_{3}$ are equidistant, that is, when
$x_{1}=x_{0}+h,$ $x_{2}=x_{0}+2h,$ $x_{3}=x_{0}+3h$ for some $h>0,$ the last
inequality becomes%
\[
f(x_{0}+3h)-3f(x_{0}+2h)+3f(x_{0}+h)-f(x_{0})\geq0,
\]
equivalently,%
\begin{equation}
f(x_{0})+3f\left(  \frac{x_{0}+2x_{3}}{3}\right)  \leq3f\left(  \frac
{2x_{0}+x_{3}}{3}\right)  +f(x_{3}). \label{eq3convJ}%
\end{equation}

Fortunately, some others, more convenient, approaches are available.

If $f$ is $3$-times differentiable, then a repeated application of the mean
value theorem yields the existence of a point $\xi\in\left(  \min_{k}%
x_{k},\max_{k}x_{k}\right)  $ such that
\[
\lbrack x_{0},x_{1},x_{2},x_{3};f]=\frac{f^{(3)}(\xi)}{6}.
\]

As a consequence, one obtains the sufficiency part of the following practical
criterion of $3$-convexity.

\begin{lemma}
\label{lemHopf}Suppose that $f$ is a continuous function defined on an
interval $I$ which is $3$-times differentiable on the interior of $I.$ Then
$f$ is $3$-convex if and only if its derivative of third order is nonnegative.
\end{lemma}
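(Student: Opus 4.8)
The plan is to prove the two implications separately. For the sufficiency direction, I would invoke the mean-value-type representation already recorded just above the statement: if $f$ is three times differentiable on the interior of $I$, then for any four distinct points $x_0,x_1,x_2,x_3$ in $I$ there is a point $\xi$ strictly between $\min_k x_k$ and $\max_k x_k$ with $[x_0,x_1,x_2,x_3;f]=f^{(3)}(\xi)/6$. If $f^{(3)}\geq 0$ throughout the interior, then this divided difference is nonnegative for every such quadruple, which is exactly the definition of $3$-convexity; the endpoints of $I$ (if any belong to $I$) cause no trouble since the divided difference is continuous in the $x_i$ and we can take limits, or simply note that in any quadruple at most the extreme points can be endpoints and $\xi$ still lies in the open interval. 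So this direction is essentially a quotation of what precedes the lemma.

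For the necessity direction, I would argue by contradiction: suppose $f$ is $3$-convex but $f^{(3)}(c)<0$ for some interior point $c$. Since $f$ is three times differentiable near $c$, by continuity — more precisely, by Darboux's theorem $f^{(3)}$, being a derivative (of $f''$), has the intermediate value property, but here it is simpler to use that $f^{(3)}$ need not be assumed continuous, so I would instead use a Taylor-expansion argument. Expand $f$ around $c$ to third order: $f(c+t)=f(c)+f'(c)t+\tfrac12 f''(c)t^2+\tfrac16 f^{(3)}(c)t^3+o(t^3)$ as $t\to 0$. Evaluate at the four equidistant points $c+jh$, $j=0,1,2,3$ (shifting the base point slightly so all four lie in the interior), and form the third finite difference $\Delta_h^3 f(c) = f(c+3h)-3f(c+2h)+3f(c+h)-f(c)$. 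The polynomial part of degree $\leq 2$ is annihilated by the third difference, and a short computation gives $\Delta_h^3 f(c) = f^{(3)}(c)h^3 + o(h^3)$ as $h\to 0^+$. Hence for all sufficiently small $h>0$ the sign of $\Delta_h^3 f(c)$ equals the sign of $f^{(3)}(c)$, namely negative. But $\Delta_h^3 f(c)\geq 0$ is precisely the equidistant instance of $3$-convexity displayed in the line preceding \eqref{eq3convJ} (with $x_0=c$, $x_3=c+3h$). This contradiction forces $f^{(3)}\geq 0$ on the interior of $I$.

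The only genuinely delicate point is making the finite-difference asymptotics rigorous when $f^{(3)}$ is not assumed continuous: one has a pointwise third-order Taylor expansion with a little-$o$ remainder at the single point $c$, but one must be sure the $o(t^3)$ terms coming from the four shifted evaluations combine into a single $o(h^3)$. This is straightforward since each remainder is $o((jh)^3)=o(h^3)$ and there are only four of them with fixed coefficients. Apart from that, everything is routine: the sufficiency half is a direct consequence of the displayed mean value representation, and the necessity half reduces to the elementary observation that the third finite difference detects the sign of the third derivative in the limit $h\to 0^+$.
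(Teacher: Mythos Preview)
Your proposal is correct and follows essentially the same route as the paper. For sufficiency you both quote the mean-value representation $[x_0,x_1,x_2,x_3;f]=f^{(3)}(\xi)/6$, and for necessity the paper simply invokes the standard formula $f^{(3)}(x)=\lim_{h\to 0^+}\Delta_h^3 f(x)/h^3$, which is precisely what your Taylor-expansion computation $\Delta_h^3 f(c)=f^{(3)}(c)h^3+o(h^3)$ establishes; your version is just a more explicit justification of that limit (and your remark about the $o(h^3)$ remainders combining correctly is exactly the care needed when $f^{(3)}$ is not assumed continuous).
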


The necessity part is also immediate by using the standard formulas for
derivatives via iterated differences,%
\[
f^{(3)}(x)=\lim_{h\rightarrow0+}\frac{f(x_{0}+3h)-3f(x_{0}+2h)+3f(x_{0}%
+h)-f(x_{0})}{h^{3}}.
\]

According to Lemma \ref{lemHopf}, the following functions are $3$-convex
functions on $\mathbb{R}_{+}$:
\begin{gather*}
x^{\alpha}\text{ }(\text{for }\alpha\in(0,1]\cup\lbrack2,\infty)),\text{\quad
}x/(1+x),\text{ }\\
\log(1+x),~-x\log x,~\sinh,~\cosh,~\text{and }-\log\left(  \Gamma(x)\right)  .
\end{gather*}

Notice that the polynomials of degree $\leq2$ are both $3$-convex and
3-concave functions on the whole real line.

\begin{remark}
\emph{(}Permanence proprieties\emph{)}\label{rem_permanence prop}The
continuous $n$-convex functions defined on an interval $I$ constitute a convex
cone in the vector space $C(I),$ of all continuous functions on $I.$

Every continuous function which is $n$-convex on the interior of $I$ is
$n$-convex on the whole interval.

The limit of a pointwise convergent sequence of $n$-convex functions is also
an $n$-convex function.
\end{remark}

The following characterization of higher order convexity is due to Hopf
(\cite{Hopf}, p. 24) and Popoviciu (\cite{Pop34}, p. 48):

\begin{theorem}
\label{thmH_P}Suppose that $f$ is a continuous function defined on an interval
$I.$ Then $f$ is $3$-convex if and only if it is differentiable on the
interior of $I$ and $f^{\prime}$ is a convex function.
\end{theorem}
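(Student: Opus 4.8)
The plan is to deduce both implications from one identity for divided differences together with the elementary fact, recorded in Section~2, that a function on an interval has all its second-order divided differences nonnegative precisely when it is convex. The identity is: for fixed $a\in I$, the first-order slope function $\phi_{a}(x)=\dfrac{f(x)-f(a)}{x-a}$ satisfies
\[
[x_{1},x_{2},x_{3};\phi_{a}]=[a,x_{1},x_{2},x_{3};f]\qquad\text{for all distinct }x_{1},x_{2},x_{3}\in I\setminus\{a\},
\]
which is immediate from the recursion for divided differences and their invariance under permutations. Hence $f$ is $3$-convex on $I$ if and only if, for every $a\in I$, every second divided difference of $\phi_{a}$ is nonnegative, i.e.\ (up to the behaviour at the excluded point $a$) each $\phi_{a}$ is convex.

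For the ``if'' part, suppose $f$ is continuous on $I$, differentiable on its interior, and $f'$ is convex there. A convex function on an open interval is continuous, so $f$ is of class $C^{1}$ on the interior of $I$; thus, for $a$ and $x$ in the interior, the substitution $t=(1-s)a+sx$ yields
\[
\phi_{a}(x)=\frac{1}{x-a}\int_{a}^{x}f'(t)\,dt=\int_{0}^{1}f'\bigl((1-s)a+sx\bigr)\,ds .
\]
For each $s\in[0,1]$ the integrand is $f'$ composed with an affine function of $x$, hence convex in $x$; integrating over $s$ preserves convexity, so $\phi_{a}$ is convex. By the identity above, $[a,x_{1},x_{2},x_{3};f]=[x_{1},x_{2},x_{3};\phi_{a}]\geq0$ for every quadruple of distinct points in the interior of $I$, and the permanence properties of Remark~\ref{rem_permanence prop} promote this to $3$-convexity on all of $I$. (Alternatively one may first smooth $f'$ and invoke Lemma~\ref{lemHopf} for the resulting $C^{3}$ function.)

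For the ``only if'' part, suppose $f$ is $3$-convex. By the identity, every second divided difference of each $\phi_{a}$ is nonnegative; in particular $\phi_{a}$ is convex on each of the two pieces of $I\setminus\{a\}$, and the inequalities coming from triples that straddle $a$ force the one-sided limits $\lim_{x\to a\pm}\phi_{a}(x)$ to be finite (otherwise such a divided difference would tend to $-\infty$) and to coincide. Consequently $f$ is differentiable at every interior point $a$, with $f'(a)$ equal to this common limit, and $\phi_{a}$ extends to a bona fide convex function $\widetilde{\phi}_{a}$ on $I$ with $\widetilde{\phi}_{a}(a)=f'(a)$. To see that $f'$ is convex, fix $a<c<b$ in the interior and apply convexity of $\widetilde{\phi}_{c}$ at $c=\tfrac{b-c}{b-a}\,a+\tfrac{c-a}{b-a}\,b$:
\[
f'(c)\leq\frac{b-c}{b-a}\cdot\frac{f(c)-f(a)}{c-a}+\frac{c-a}{b-a}\cdot\frac{f(b)-f(c)}{b-c}.
\]
Since $\dfrac{f(c)-f(a)}{c-a}-f'(a)=(c-a)\,[a,a,c;f]$ and $f'(b)-\dfrac{f(b)-f(c)}{b-c}=(b-c)\,[c,b,b;f]$, the desired inequality $f'(c)\leq\tfrac{b-c}{b-a}f'(a)+\tfrac{c-a}{b-a}f'(b)$ is equivalent to $[a,a,c;f]\leq[c,b,b;f]$. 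This last inequality holds because $[x_{0},x_{1},x_{2};f]$ is nondecreasing in each of its three nodes --- the partial derivative with respect to a node equals the divided difference with that node doubled, which is $\geq0$ by the $3$-convexity (and differentiability) of $f$ --- so one can pass from $(a,a,c)$ to $(c,b,b)$ via $(a,c,c)$, $(a,b,c)$ and $(a,b,b)$, raising one coordinate at a time.

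The step I expect to need real care is the differentiability of $f$ on the interior: since a $3$-convex function need not be convex (for instance $\sqrt{\,\cdot\,}$ on $[0,\infty)$), one cannot simply quote the existence of one-sided derivatives, and the argument must genuinely exploit the straddling divided-difference inequalities to rule out a jump of $\phi_{a}$ at $a$. The remaining ingredients --- the key identity, the integral representation of $\phi_{a}$, the confluent identities, and the node-monotonicity of $[x_{0},x_{1},x_{2};f]$ --- are routine, provided one respects the logical order (first differentiability of $f$, then convexity of $f'$, which depends on it).
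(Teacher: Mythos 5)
The paper does not prove Theorem~\ref{thmH_P} at all: it is quoted as a known result of Hopf and Popoviciu with page references, so there is no in-paper argument to compare against. Your proof is, as far as I can check, correct and self-contained, and it is a genuinely useful supplement. The reduction via the identity $[x_{1},x_{2},x_{3};\phi_{a}]=[a,x_{1},x_{2},x_{3};f]$ is the right organizing device: it converts $3$-convexity of $f$ into (punctured) convexity of every slope function $\phi_{a}$, after which the ``if'' direction is a one-line consequence of the integral representation $\phi_{a}(x)=\int_{0}^{1}f'((1-s)a+sx)\,ds$ and the ``only if'' direction reduces to removing the puncture at $a$. Two places deserve to be written out rather than asserted. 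First, the coincidence of the one-sided limits of $\phi_{a}$ at $a$: your parenthetical is correct but terse; the point is that if $\lim_{x\to a-}\phi_{a}(x)\neq\lim_{x\to a+}\phi_{a}(x)$, then for $x_{1}<x_{2}<x_{3}$ with exactly one of the gaps $(x_{1},x_{2})$ or $(x_{2},x_{3})$ shrinking onto $a$ from the appropriate side, the term $[x_{1},x_{2};\phi_{a}]$ or $[x_{2},x_{3};\phi_{a}]$ blows up with the sign that drives $[x_{1},x_{2},x_{3};\phi_{a}]$ to $-\infty$, contradicting nonnegativity; this (together with the bound from a fixed straddling chord, which rules out $+\infty$) is exactly what yields differentiability of $f$ at $a$. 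Second, in the last step the displayed inequality for $f'(c)$ is not \emph{equivalent} to $[a,a,c;f]\leq[c,b,b;f]$; rather, combined with the chord inequality for $\widetilde{\phi}_{c}$ it \emph{follows from} it. The node-monotonicity of $[x_{0},x_{1},x_{2};f]$ used there is legitimate once differentiability is in hand (each confluent difference is a limit of differences at distinct nodes, and raising a distinct node adds a nonnegative multiple of a third-order difference), and you correctly flag that the logical order matters. With those two expansions the argument is complete; an alternative shortcut for the second half, once each $\widetilde{\phi}_{c}$ is known to be convex with $\widetilde{\phi}_{c}(c)=f'(c)$, is to note that $f'(c)\leq\frac{b-c}{b-a}[a,c;f]+\frac{c-a}{b-a}[c,b;f]$ and then let $a\uparrow c$, $b\downarrow c$ only after symmetrizing, but your divided-difference bookkeeping is just as good.
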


\begin{corollary}
\label{corH_P_S}Every function $f\in\mathcal{S}$ is $3$-concave and every
nondecreasing, convex and $3$-concave function $f:[0,\infty)\rightarrow
\lbrack0,\infty)$ belongs to $\mathcal{S}$.
\end{corollary}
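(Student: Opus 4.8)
The goal is to prove Corollary \ref{corH_P_S}, which asserts the equivalence of two descriptions of the class $\mathcal{S}$: namely that $f\in\mathcal{S}$ (nondecreasing, convex, differentiable, with concave derivative) is the same as saying $f$ is nondecreasing, convex, and $3$-concave.

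My plan is to invoke Theorem \ref{thmH_P}, applied to $-f$ rather than $f$. Since $f$ is defined on the interval $I=[0,\infty)$, Theorem \ref{thmH_P} tells us that the continuous function $-f$ is $3$-convex if and only if $-f$ is differentiable on the interior of $I$ with $(-f)'=-f'$ convex. Now $-f$ being $3$-convex means precisely that all divided differences $[x_0,x_1,x_2,x_3;-f]=-[x_0,x_1,x_2,x_3;f]$ are nonnegative, i.e.\ that $f$ is $3$-concave. Likewise, $-f'$ being convex is the same as $f'$ being concave. So Theorem \ref{thmH_P} immediately gives: a continuous $f$ on $[0,\infty)$ is $3$-concave if and only if it is differentiable on $(0,\infty)$ and $f'$ is concave there.

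First, let $f\in\mathcal{S}$. By definition $f$ is nondecreasing, convex, differentiable, and $f'$ is concave; moreover a convex function on an interval is continuous on the interior, and in the present setting $f$ is even assumed differentiable (hence continuous) throughout $[0,\infty)$. By the equivalence just derived, $f$ is $3$-concave. Thus $f$ is a nondecreasing, convex, $3$-concave function on $[0,\infty)$, which is the second description. Conversely, suppose $f:[0,\infty)\rightarrow[0,\infty)$ is nondecreasing, convex, and $3$-concave. Being convex (and finite) on $[0,\infty)$, $f$ is continuous on $(0,\infty)$; continuity at the endpoint $0$ needs a word, but a convex function on $[0,\infty)$ taking finite values is automatically upper semicontinuous at $0$ and, being also monotone (nondecreasing) and bounded below by $0$, is continuous at $0$ as well, so $f$ is continuous on all of $[0,\infty)$. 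Then the equivalence derived above applies: $f$ is differentiable on $(0,\infty)$ with $f'$ concave there. Right-differentiability at $0$ follows from convexity (the right derivative of a convex function exists everywhere), so $f$ is differentiable on $[0,\infty)$ in the one-sided sense used throughout, and $f'$ is concave. Hence $f\in\mathcal{S}$.

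The only genuinely delicate point is the matching of regularity hypotheses at the endpoint $0$ and the precise sense of ``differentiable'' at $0$; everything else is a direct translation of Theorem \ref{thmH_P} via the substitution $f\mapsto -f$, using that divided differences are linear in $f$ and that the sign flips under negation. One should also note, using Remark \ref{rem_permanence prop}, that a function $3$-concave on $(0,\infty)$ is $3$-concave on $[0,\infty)$, which removes any worry about the boundary in the divided-difference condition. I do not expect any computational obstacle here; the corollary is essentially a bookkeeping consequence of the Hopf--Popoviciu theorem together with the elementary fact that a convex nondecreasing function on $[0,\infty)$ with values in $[0,\infty)$ is continuous and admits a right derivative at $0$.
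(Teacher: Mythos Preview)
Your approach---reduce both directions to Theorem \ref{thmH_P} applied to $-f$---is exactly the paper's, and the only substantive content is the behaviour at the endpoint $x=0$, which you flag as ``delicate'' but do not actually settle. The paper's proof is devoted entirely to this point, and two of your claims need tightening.

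First, ``the right derivative of a convex function exists everywhere'' is not sufficient at the left endpoint: for a convex function on $[0,\infty)$ the right derivative at $0$ may equal $-\infty$ (take $-\sqrt{x}$). The paper uses the \emph{nondecreasing} hypothesis to bound the difference quotients $(f(x)-f(0))/x$ below by $0$, so that their infimum is finite and $f'(0)$ exists. Second, Theorem \ref{thmH_P} only gives concavity of $f'$ on the interior $(0,\infty)$; membership in $\mathcal{S}$ requires $f'$ concave on all of $[0,\infty)$, and for this one needs $f'$ to be continuous at $0$. The paper proves this explicitly: since $f$ is convex, $f'$ is nondecreasing, so $\lim_{x\to 0+}f'(x)=\inf_{x>0}f'(x)$, and this infimum is shown to equal $f'(0)$. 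Neither step is hard, but together they are the entire content of the corollary beyond the direct appeal to Theorem \ref{thmH_P}, and your sketch does not supply them.
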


\begin{proof}
It suffices to show that every nondecreasing, convex and $3$-concave function
$f:[0,\infty)\rightarrow\lbrack0,\infty)$ is continuously differentiable at
the origin. For this, notice first that%
\[
0<x<y\text{\quad implies\quad}0\leq\frac{f(x)-f(0)}{x}\leq\frac{f(y)-f(0)}%
{y},
\]
since~$f$ is nondecreasing and convex. As a consequence,%
\[
\lim_{x\rightarrow0+}\frac{f(x)-f(0)}{x}=\inf_{x>0}\frac{f(x)-f(0)}{x}\geq0,
\]
which assures the differentiability at the origin (and thus everywhere,
according to Theorem \ref{thmH_P}). Since $f$ is convex, its derivative
$f^{\prime}$ is nondecreasing. Therefore,%
\begin{align*}
\lim_{x\rightarrow0+}f^{\prime}(x)  &  =\inf_{x>0}f^{\prime}(x)=\inf
_{x,h>0}\frac{f(x+h)-f(x)}{h}\\
&  =\underset{h\rightarrow0+}{\lim\inf}\frac{f(h)-f(0)}{h}=f^{\prime}(0),
\end{align*}
which means that $f^{\prime}$ is continuous at the origin.
\end{proof}

An important source of nonnegative, nondecreasing, convex and $3$-concave
functions on a compact interval $[0,A]$ is that of completely monotone
functions. Recall that a function $f:[0,\infty)\rightarrow\lbrack0,\infty)$ is
\emph{completely monotone }if it is continuous on $[0,\infty)$, indefinitely
differentiable on $(0,\infty)$ and
\[
(-1)^{n}f^{(n)}(x)\geq0\text{\quad for all }x>0\text{ and }n\geq0.
\]

Some simple examples are $e^{-x},$ $1/(1+x),~$and $\left(  1/x\right)
\log(1+x).$ Fore more details, see the monograph of Schilling, Song and
Vondra\v{c}ek \cite{SSV}. Every completely monotone function $f$ is
nonnegative, nonincreasing, convex and $3$-concave, but adding to it linear
functions $\alpha x$ with $\alpha\geq-\inf_{x\in\lbrack0,A]}f^{\prime}(x)$ one
obtains nonnegative, nondecreasing, convex and $3$-concave functions on a
given compact interval $[0,A].$

Popoviciu has characterized the property of $n$-convexity in terms of higher
order differences.

The \emph{difference operator} $\Delta_{h}$ $($of step size $h\geq0)$
associates to each function $f$ defined on an interval $I$ the function
$\Delta_{h}f$ defined by%
\[
\left(  \Delta_{h}f\right)  (x)=f(x+h)-f(x),
\]
for all $x$ such that the right-hand side formula makes sense. Notice that no
restrictions are necessary if $I=\mathbb{R}^{+}$ or $I=\mathbb{R}.$ The
difference operators are linear and commute to each other,%
\[
\Delta_{h_{1}}\Delta_{h_{2}}=\Delta_{h_{2}}\Delta_{h_{1}}.
\]

They also verify the following property of invariance under translation:%
\[
\Delta_{h}\left(  f\circ T_{a}\right)  =\left(  \Delta_{h}f\right)  \circ
T_{a},
\]
where $T_{a}$ is the translation defined by the formula $T_{a}(x)=x+a.$

The higher order iterated differences can be introduced via the formulas:%
\begin{align*}
\left(  \Delta_{h}\right)  ^{0}f(x)  &  =f(x)\\
\left(  \Delta_{h}\right)  ^{n}f(x)  &  =\underset{n~\text{times}}%
{\underbrace{\Delta_{h}\cdots\Delta_{h}}}f(x)\\
&  =%
{\displaystyle\sum\nolimits_{k=0}^{n}}
\left(  -1\right)  ^{n-k}\binom{n}{k}f\left(  x+kh\right)  \text{\quad for
}n\geq1.
\end{align*}
Their connection with the higher order divided differences is given by%
\[
\left(  \Delta_{h}\right)  ^{n}f(x)=h^{n}[x,x+h,...,x+nh;f]
\]
and this applies to every function $f$ defined on an interval $I$ of the form
$[0,A]$ or $[0,\infty),$ all points $x\in I$ and all steps $h>0$ such that
$x+nh\in I.$

Clearly, if $f$ is an $n$-convex function $(n\geq1)$ defined on an interval
$I$ of the form $[0,A]$ or $[0,\infty),$ then%
\begin{equation}
\left(  \Delta_{h}\right)  ^{n}f(x)\geq0 \label{eq_equal differences}%
\end{equation}
for all $x\in I$ and all $h>0$ such that $x+nh\in I.$ As was noticed by
Popoviciu \cite{Pop44} (at the beginning of Section 24, p. 49)) this property
characterizes the $n$-convex functions under the presence of continuity. See
also \cite{MN2023} and \cite{NS2023} (as well as the references therein).

The inequality (\ref{eq_equal differences}) together with Bernstein's variant
of the Weierstrass approximation theorem (see \cite{CN2014}, Theorem 8.8.1, p.
256) yields the following shape preserving approximation result.

\begin{theorem}
$($\emph{Popoviciu's approximation theorem} \emph{\cite{Pop35}}$)$%
\label{thm_Pop35} If a continuous function $f:[0,1]\rightarrow\mathbb{R}$ is
$k$-convex, then so are the Bernstein polynomials associated to it,
\[
B_{n}(f)(x)=\sum_{i=0}^{n}\binom{n}{i}x^{i}(1-x)^{n-i}f\left(  \frac{i}%
{n}\right)  .
\]
Moreover, by the well-known property of simultaneous uniform approximation of
a function and its derivatives by the Bernstein polynomials and their
derivatives, it follows that $B_{n}(f)$ and any derivative \emph{(}of any
order\emph{)} of it converge uniformly to $f$ and to its derivatives, correspondingly.
\end{theorem}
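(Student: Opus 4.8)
The plan is to reduce the $k$-convexity of $B_{n}(f)$ to a sign condition on its $k$-th derivative, which can be computed explicitly in terms of the iterated differences $\left(\Delta_{h}\right)^{k}f$ from Section 2, and then to invoke the characterization (\ref{eq_equal differences}). First I would record the elementary identity for the derivative of a Bernstein basis polynomial $p_{n,i}(x)=\binom{n}{i}x^{i}(1-x)^{n-i}$, namely
\[
\frac{d}{dx}p_{n,i}(x)=n\left[p_{n-1,i-1}(x)-p_{n-1,i}(x)\right]
\]
(with the convention $p_{n-1,-1}=p_{n-1,n}=0$), which follows from $\binom{n}{i}i=n\binom{n-1}{i-1}$ and $\binom{n}{i}(n-i)=n\binom{n-1}{i}$. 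Summing against the values $f(i/n)$ and collapsing the resulting telescoping sum yields
\[
B_{n}'(f)(x)=n\sum_{i=0}^{n-1}\left(\Delta_{1/n}f\right)\!\left(\tfrac{i}{n}\right)p_{n-1,i}(x),
\]
where $\Delta_{1/n}$ is the difference operator of step $1/n$. Iterating this identity $r$ times -- each application raises the order of the difference by one, lowers the degree of the underlying basis by one, and contributes one factor to a falling factorial -- produces the key formula
\[
B_{n}^{(r)}(f)(x)=\frac{n!}{(n-r)!}\sum_{i=0}^{n-r}\left(\Delta_{1/n}\right)^{r}f\!\left(\tfrac{i}{n}\right)p_{n-r,i}(x),\qquad 0\le r\le n.
\]

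Next I would specialize to $r=k$. If $k>n$, then $B_{n}(f)$ is a polynomial of degree at most $n$, so $B_{n}^{(k)}(f)\equiv 0$ and $B_{n}(f)$ is trivially both $k$-convex and $k$-concave. For $1\le k\le n$, the hypothesis that $f$ is $k$-convex gives, through (\ref{eq_equal differences}) applied with $h=1/n$ and $x=i/n$ (where $0\le i\le n-k$ guarantees $x+kh\le 1$), that $\left(\Delta_{1/n}\right)^{k}f(i/n)\ge 0$ for every admissible $i$; meanwhile the factor $n!/(n-k)!$ is positive and each basis polynomial $p_{n-k,i}(x)$ is nonnegative on $[0,1]$. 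Hence $B_{n}^{(k)}(f)(x)\ge 0$ throughout $[0,1]$. Since $B_{n}(f)$ is a polynomial, the mean-value identity for divided differences $[x_{0},\dots,x_{k};g]=g^{(k)}(\xi)/k!$ -- the general-order version of the computation preceding Lemma \ref{lemHopf} -- shows that every $k$-th divided difference of $g=B_{n}(f)$ is nonnegative, which is precisely $k$-convexity of $B_{n}(f)$. This completes the shape-preservation assertion.

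For the final statement on simultaneous convergence I would appeal to the same derivative formula. Rewriting the coefficient as $\frac{n!}{(n-k)!}\left(\Delta_{1/n}\right)^{k}f(i/n)=\frac{n!}{(n-k)!\,n^{k}}\cdot n^{k}\left(\Delta_{1/n}\right)^{k}f(i/n)$ and observing that $\frac{n!}{(n-k)!\,n^{k}}\to 1$ while $n^{k}\left(\Delta_{1/n}\right)^{k}f$ approximates $f^{(k)}$ at the nodes, one recognizes $B_{n}^{(k)}(f)$ as a Bernstein-type approximant of $f^{(k)}$; the uniform convergence $B_{n}^{(k)}(f)\to f^{(k)}$ (for those derivatives of $f$ that exist continuously) then follows from Bernstein's variant of the Weierstrass theorem (\cite{CN2014}, Theorem 8.8.1). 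I expect the principal technical obstacle to lie entirely in the inductive derivation of the $r$-th derivative formula -- managing the shifted basis indices so that the telescoping collapses correctly and the falling factorial emerges -- rather than in any conceptual difficulty. Once that formula is in hand, both the nonnegativity argument and the convergence statement follow at once, since all the analytic content of the theorem is carried by the single identity for $B_{n}^{(r)}(f)$.
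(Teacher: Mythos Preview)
Your proposal is correct and follows essentially the same route as the paper: both arguments rest on the inductive derivative formula
\[
B_{n}^{(r)}(f)(x)=\frac{n!}{(n-r)!}\sum_{i=0}^{n-r}\left(\Delta_{1/n}\right)^{r}f\!\left(\tfrac{i}{n}\right)\binom{n-r}{i}x^{i}(1-x)^{n-r-i},
\]
combine it with the nonnegativity of the iterated differences granted by (\ref{eq_equal differences}), and then invoke the sign-of-the-$k$th-derivative criterion (Lemma~\ref{lemHopf} in its general-order form) to conclude $k$-convexity. Your write-up simply supplies more of the details (the basis-polynomial differentiation identity, the trivial case $k>n$, and the recognition of $B_{n}^{(k)}(f)$ as a Bernstein approximant of $f^{(k)}$) that the paper leaves implicit.
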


Using a change of variable, one can easily see that the approximation theorem
extends to functions defined on compact intervals $[a,b]$ with $a<b.$

\begin{proof}
Using mathematical induction one can easily show that the derivatives of
Bernstein's polynomials verify the formula%
\begin{multline*}
B_{n}^{(k)}(f)(x)\\
=n(n-1)\cdots(n-k+1)\sum\nolimits_{i=0}^{n-k}\underset{k~\text{times}%
}{\underbrace{\Delta_{1/n}\cdots\Delta_{1/n}}}f(j/n)\binom{n-k}{i}%
x^{i}(1-x)^{n-k-i}.
\end{multline*}
The proof ends by taking into account the formula (\ref{eq_equal differences})
and Lemma \ref{lemHopf}.
\end{proof}

When combined with Remark \ref{rem_permanence prop}, Theorem \ref{thm_Pop35}
implies that any result valid for the smooth $n$-convex functions also works
for all $n$-convex continuous functions.

\begin{corollary}
\label{cor_comp}If $f:\mathbb{R}_{+}\rightarrow\lbrack0,\infty)$ is a
continuous $3$-convex function which is also nondecreasing and concave, then
the same properties hold for $f^{\alpha}$ if $\alpha\in(0,1].$
\end{corollary}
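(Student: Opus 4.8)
The plan is to treat the required properties of $f^{\alpha}$ separately, since only the $3$-convexity needs real work. Write $f^{\alpha}=\varphi\circ f$ with $\varphi(t)=t^{\alpha}$. On $[0,\infty)$ the map $\varphi$ is continuous, nonnegative, nondecreasing and concave (for $\alpha\in(0,1)$ one has $\varphi''<0$ on $(0,\infty)$; for $\alpha=1$ it is linear). Consequently $f^{\alpha}$ is continuous with values in $[0,\infty)$; it is nondecreasing as a composition of nondecreasing maps; and it is concave because a nondecreasing concave function of a concave function is concave. Thus the entire issue is to show that $f^{\alpha}$ is $3$-convex.

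For the $3$-convexity I would reduce to the smooth case. Fix $A>0$ and apply Popoviciu's approximation theorem (Theorem \ref{thm_Pop35}, together with its extension to arbitrary compact intervals) to $f|_{[0,A]}$. Since the Bernstein operator is linear and maps $k$-convex continuous functions to $k$-convex functions for every $k$, the Bernstein polynomials $g_{n}=B_{n}(f)$ on $[0,A]$ are simultaneously nonnegative (being convex combinations of the values $f(i/n)$), nondecreasing ($1$-convex), concave ($-g_{n}=B_{n}(-f)$ is $2$-convex) and $3$-convex, they are $C^{\infty}$, and $g_{n}\to f$ uniformly on $[0,A]$. As $\varphi$ is uniformly continuous on compacts, $g_{n}^{\alpha}\to f^{\alpha}$ uniformly on $[0,A]$. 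By the permanence properties of Remark \ref{rem_permanence prop} (stability of $3$-convexity under pointwise limits) it then suffices to prove that each $g_{n}^{\alpha}$ is $3$-convex on $[0,A]$; letting $A\to\infty$ afterwards gives $3$-convexity of $f^{\alpha}$ on $[0,\infty)$, since every divided difference of $f^{\alpha}$ involves only finitely many points.

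So let $g\colon[0,A]\to[0,\infty)$ be $C^{\infty}$, nondecreasing, concave and $3$-convex, i.e.\ $g\geq0$, $g'\geq0$, $g''\leq0$, $g'''\geq0$. If $g$ is constant, then $g^{\alpha}$ is constant and hence $3$-convex. Otherwise $g$ is a nonconstant nonnegative nondecreasing polynomial, so it cannot vanish on $(0,A]$ (a zero there would force $g\equiv0$ on an initial subinterval, hence identically), and $g^{\alpha}$ is smooth on $(0,A)$. A direct differentiation gives, for $\alpha\in(0,1)$,
\[
(g^{\alpha})'''=\alpha\,g^{\alpha-3}\Bigl[(\alpha-1)(\alpha-2)(g')^{3}+3(\alpha-1)\,g\,g'\,g''+g^{2}g'''\Bigr],
\]
and each bracketed term is nonnegative: the first because $(\alpha-1)(\alpha-2)>0$ and $g'\geq0$; the second from the signs $\alpha-1<0$, $g\geq0$, $g'\geq0$, $g''\leq0$; the third because $g^{2}\geq0$ and $g'''\geq0$. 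Hence $(g^{\alpha})'''\geq0$ on $(0,A)$, so by Lemma \ref{lemHopf} $g^{\alpha}$ is $3$-convex on the interior of $[0,A]$, and therefore on all of $[0,A]$ by Remark \ref{rem_permanence prop}; the case $\alpha=1$ is trivial. The only genuinely delicate point is the behaviour at the endpoint $x=0$, where $f^{\alpha}$ need not be thrice differentiable — already $f(x)=x$ shows this — which is exactly why the argument is routed through the polynomials $g_{n}$ and the permanence properties rather than through a direct differentiation of $f^{\alpha}$.
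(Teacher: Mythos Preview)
Your proof is correct and follows essentially the same route as the paper: reduce via Popoviciu's approximation theorem (Theorem~\ref{thm_Pop35}) to the smooth case and then verify $3$-convexity through Lemma~\ref{lemHopf}. You simply make explicit what the paper's one-line proof leaves to the reader --- the third-derivative computation for $g^{\alpha}$, the handling of possible zeros of the Bernstein polynomial, and the endpoint issue at $x=0$ --- and you dispose of the nondecreasing and concave properties directly by composition rather than via the approximation, which is a harmless shortcut.
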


\begin{proof}
According to Theorem \ref{thm_Pop35}, we may reduce the proof to the case
where the involved function is of class $C^{3}$, in which case the conclusion
follows from Lemma \ref{lemHopf}.
\end{proof}

For a second application of Theorem \ref{thm_Pop35} we need the following well
known fact on concave functions.

\begin{lemma}
\label{lemf/x}If $f:[0,\infty)\rightarrow\mathbb{R}$ is a concave function and
$f(0)\geq0,$ then the function $f(x)/x$ is nonincreasing on \thinspace
$(0,\infty).$
\end{lemma}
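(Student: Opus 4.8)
The plan is to reduce the monotonicity of $g(x):=f(x)/x$ to a single application of the concavity inequality together with the sign hypothesis $f(0)\geq 0$. Fix two points $0<x<y$ in $(0,\infty)$; since the denominators are positive, it suffices to establish the equivalent assertion $y\,f(x)\geq x\,f(y)$, from which $g(x)\geq g(y)$ follows at once.

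The key observation is that $x$ is a convex combination of $y$ and $0$: setting $t=x/y\in(0,1)$ we have $x=t\cdot y+(1-t)\cdot 0$. Applying the concavity of $f$ on the interval $[0,y]$ gives
\[
f(x)\geq t\,f(y)+(1-t)\,f(0).
\]
Now invoke $f(0)\geq 0$ to discard the last (nonnegative) term, which yields $f(x)\geq t\,f(y)=(x/y)\,f(y)$. Multiplying through by $y>0$ produces $y\,f(x)\geq x\,f(y)$, exactly the inequality needed, so $g$ is nonincreasing on $(0,\infty)$.

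I do not expect any genuine obstacle: the statement is essentially a one-line consequence of concavity. The only point worth flagging is that the hypothesis reads $f(0)\geq 0$ rather than $f(0)=0$; this is precisely what licenses dropping the term $(1-t)f(0)$, and it is also the reason the conclusion is stated only on the open half-line $(0,\infty)$, where $g$ is defined, the value of $f$ at the origin entering solely through its sign. If strict monotonicity were wanted one would need strict concavity of $f$ (or $f(0)>0$), but that is not claimed here.
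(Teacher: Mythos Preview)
Your proof is correct and is the standard one-line argument via the concavity inequality applied to the convex combination $x=(x/y)\,y+(1-x/y)\cdot 0$. The paper itself does not supply a proof of this lemma, stating it only as a ``well known fact on concave functions,'' so there is no approach to compare against; your write-up fills that gap cleanly.
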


We are now in a position to state a rather general result concerning the
composition of functions with opposite properties of convexity. It extends
Lemma 27 in \cite{Sch}.

\begin{theorem}
\label{thm-comp}Suppose that $f:[0,\infty)\rightarrow\mathbb{R}$ is a
nondecreasing, continuous and $3$-concave function. Then the function
$g(x)=f(x^{\alpha})$ is nondecreasing and concave for every $\alpha
\in(0,1/2].$
\end{theorem}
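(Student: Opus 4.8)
The plan is to reduce first to the case where $f$ is smooth, then to isolate a one-variable inequality and differentiate $g$ twice. The monotonicity of $g$ requires nothing: for $\alpha>0$ the map $x\mapsto x^{\alpha}$ is nondecreasing on $[0,\infty)$, and a composition of nondecreasing functions is nondecreasing. For the concavity of $g$, note that it may be checked on each compact subinterval $[0,A]$; on $[0,A^{\alpha}]$ we approximate $f$ uniformly by its Bernstein polynomials $B_{n}(f)$, which by Popoviciu's approximation theorem (Theorem \ref{thm_Pop35}, in the form valid on arbitrary compact intervals) are again nondecreasing and $3$-concave --- apply that theorem to $f$ with $k=1$ and to $-f$ with $k=3$, using the linearity of the Bernstein operator. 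Since $g_{n}(x):=B_{n}(f)(x^{\alpha})$ converges to $g(x)$ uniformly on $[0,A]$ and a pointwise limit of concave functions is concave (Remark \ref{rem_permanence prop}), it suffices to prove the concavity of $g$ when $f$ is a polynomial, in particular of class $C^{3}$ on $[0,\infty)$.

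The key step is the pointwise inequality
\[
t\,f''(t)\le f'(t)\qquad\text{for all }t>0 ,
\]
valid for every nondecreasing $C^{3}$ function $f$ on $[0,\infty)$ that is $3$-concave. Indeed, $f'\ge 0$ on $[0,\infty)$ by monotonicity, and $f'$ is concave: $-f$ is $3$-convex, so by Theorem \ref{thmH_P} its derivative $-f'$ is convex. Thus $f'$ is a nonnegative concave function on $[0,\infty)$, and Lemma \ref{lemf/x} applies to it, giving that $f'(t)/t$ is nonincreasing on $(0,\infty)$. Differentiating, $\bigl(f'(t)/t\bigr)'=\dfrac{t\,f''(t)-f'(t)}{t^{2}}\le 0$, which is the asserted inequality. (Equivalently, it follows from the tangent line estimate $f'(0)\le f'(t)+f''(t)(0-t)$ for the concave function $f'$, together with $f'(0)\ge 0$.)

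Now differentiate $g(x)=f(x^{\alpha})$ twice: $g'(x)=\alpha x^{\alpha-1}f'(x^{\alpha})\ge 0$ and
\[
g''(x)=\alpha x^{\alpha-2}\Bigl[(\alpha-1)f'(x^{\alpha})+\alpha\,x^{\alpha}f''(x^{\alpha})\Bigr].
\]
Put $t=x^{\alpha}>0$. Multiplying $t\,f''(t)\le f'(t)$ by $\alpha>0$ gives $\alpha\,t\,f''(t)\le\alpha f'(t)$, so the bracket is at most $(\alpha-1)f'(t)+\alpha f'(t)=(2\alpha-1)f'(t)\le 0$, because $\alpha\le 1/2$ and $f'(t)\ge 0$. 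Since $\alpha x^{\alpha-2}>0$, we conclude $g''\le 0$ on $(0,\infty)$, so $g$ is concave on $(0,\infty)$ and, by continuity, on all of $[0,\infty)$.

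The main obstacle is the inequality $t\,f''(t)\le f'(t)$ and the attendant care at the origin (the finiteness and nonnegativity of $f'(0)$, needed to invoke Lemma \ref{lemf/x}; in the reduced polynomial case this is automatic, while in general it follows since a concave function on $[0,\infty)$ bounded below is nondecreasing, whence $f'(0)=\inf_{t>0}f'(t)\in[0,\infty)$). Everything else is routine differentiation. The restriction $\alpha\le 1/2$ enters only through $2\alpha-1\le 0$, and it is sharp: for $f(t)=t^{2}$ (nondecreasing and $3$-concave) one has $t\,f''(t)=f'(t)$ and $g(x)=x^{2\alpha}$, which is concave precisely when $\alpha\le 1/2$.
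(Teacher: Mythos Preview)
Your proof is correct and follows essentially the same approach as the paper's: both reduce to the smooth case via Popoviciu's approximation theorem, both use Theorem~\ref{thmH_P} to get that $f'$ is concave and Lemma~\ref{lemf/x} to conclude that $f'(t)/t$ is nonincreasing, and both finish by showing $g'$ is nonincreasing. The only difference is cosmetic: the paper writes $g'(x)=\alpha x^{2\alpha-1}\cdot\dfrac{f'(x^{\alpha})}{x^{\alpha}}$ and observes directly that this is a product of two nonnegative nonincreasing functions, whereas you differentiate once more and use the inequality $t f''(t)\le f'(t)$ (which is precisely the derivative of the statement that $f'(t)/t$ is nonincreasing) to bound $g''\le 0$. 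The paper's factorization is slightly slicker since it avoids the second derivative, but the two arguments are equivalent.
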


\begin{proof}
Combining Popoviciu's approximation theorem with Remark
\ref{rem_permanence prop}, we can reduce ourselves to the case where $f$ is of
class $C^{2}.$ Since $f$ is nondecreasing, it follows that $f^{\prime}\geq0.$
According to Theorem \ref{thmH_P}, $f^{\prime}$ is a concave function, so by
Lemma \ref{lemf/x} it results that $f^{\prime}(x)/x$ is nonincreasing on
$(0,\infty).$ The fact that $g$ is nondecreasing is clear. To prove that $g$
is also concave it suffices to show that its derivative is nonincreasing.
Indeed, $g^{\prime}$ can be represented as the product of two nonincreasing
nonnegative functions,%
\[
g^{\prime}(x)=\alpha x^{\alpha-1}f^{\prime}\left(  x^{\alpha}\right)  =\alpha
x^{2\alpha-1}\cdot\frac{f^{\prime}\left(  x^{\alpha}\right)  }{x^{\alpha}},
\]
and the proof is done.
\end{proof}

Theorem \ref{thm-comp} is not valid for $\alpha\in(1/2,\infty),$ a
counterexample being provided by the function $f(x)=x^{2}$, $x\geq0.$

\begin{remark}
\label{rem_cv of f (sqrt x)}

\emph{(a)} The argument of Theorem \emph{\ref{thm-comp}} also shows that
$f\left(  x^{\alpha}\right)  $ is a nonincreasing convex function if
$\alpha\in(0,1/2]$ and $f$ is a function of the same nature$;$

\emph{(b)} $f\left(  x^{\alpha}\right)  $ is a convex function provided that
$\alpha\in(0,1/2]$ and $f$ is a differentiable, convex and $3$-convex function
such that $f^{\prime}(0)\leq0$. Indeed, proceeding as in the proof of Theorem
\emph{\ref{thm-comp}} one can assume that $f$ is of class $C^{2}.$ According
to Theorem \emph{\ref{thmH_P}}, the condition of $3$-convexity implies that
$f$ has a convex derivative on $(0,\infty)$, so taking into account Lemma
\emph{\ref{lemf/x}}, the function $f^{\prime}(x)/x$ is nondecreasing,
Therefore the derivative of $f^{\prime}(x)/x$ is nonnegative, a fact that
assures that the second derivative of the function $f\left(  x^{\alpha
}\right)  $ is also nonnegative.
\end{remark}

Some few examples of differentiable and $3$-convex functions $f:[0,\infty
)\rightarrow\mathbb{R}$ such that $f(0)=f^{\prime}(0)=0$ are%
\[
x^{\alpha}\text{ }(\alpha\geq2),\text{ }e^{x}-1-x,\text{ }-x\log(x+1)\text{
and }-\log(\cosh x).
\]

More results concerning the $3$-convex/$3$-concave functions are made
available by the recent survey of Marinescu and Niculescu \cite{MN2023}.

\section{Some consequences of the Hardy-Littlewood-Pólya theorem of
majorization}

We start by recalling the Hardy-Littlewood-Pólya theorem of majorization:

\begin{theorem}
\label{thmHLP}Let $f$ be a real-valued convex function defined on a nonempty
interval $I.$ If $\mathbf{x}=(x_{k})_{k=1}^{n}$ and $\mathbf{y}=(y_{k}%
)_{k=1}^{n}$ are two families of points in $I$ such that%
\begin{equation}
x_{1}\geq\cdots\geq x_{n} \label{HLP1}%
\end{equation}
and%
\begin{equation}
\sum_{k=1}^{m}x_{k}\leq\sum_{k=1}^{m}y_{k}\quad\text{for }m=1,\dots,n,
\label{HLP2}%
\end{equation}

with equality for $m=n,$ then%
\begin{equation}
\sum_{k=1}^{n}f(x_{k})\leq\sum_{k=1}^{n}f(y_{k}). \label{HLP3}%
\end{equation}

When condition \emph{(\ref{HLP1})} is replaced by $y_{1}\leq\cdots\leq y_{n},$
then the conclusion \emph{(\ref{HLP3})} works in the reverse direction.
\end{theorem}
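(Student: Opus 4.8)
The plan is to combine elementary supporting-line estimates with Abel summation (summation by parts). Put $d_{k}=y_{k}-x_{k}$ and
\[
D_{m}=\sum_{k=1}^{m}d_{k}=\sum_{k=1}^{m}y_{k}-\sum_{k=1}^{m}x_{k}\qquad(m=0,1,\dots ,n),
\]
so that $D_{0}=0$, and the hypothesis \eqref{HLP2} together with the equality for $m=n$ says precisely that $D_{m}\geq 0$ for $1\leq m\leq n$ while $D_{n}=0$.

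First I would pick, for each index $k$, a real number $c_{k}$ that supports the convex function $f$ from below at $x_{k}$, i.e.
\[
f(t)\geq f(x_{k})+c_{k}(t-x_{k})\qquad\text{for all }t\in I ;
\]
in particular $f(y_{k})-f(x_{k})\geq c_{k}d_{k}$. Because the subdifferential of a convex function is monotone and $x_{1}\geq\cdots\geq x_{n}$, these slopes can be taken with $c_{1}\geq\cdots\geq c_{n}$. Summing the $n$ pointwise inequalities and then applying Abel summation, with $d_{k}=D_{k}-D_{k-1}$ and $D_{0}=D_{n}=0$, gives
\[
\sum_{k=1}^{n}f(y_{k})-\sum_{k=1}^{n}f(x_{k})\geq\sum_{k=1}^{n}c_{k}d_{k}=\sum_{k=1}^{n-1}(c_{k}-c_{k+1})D_{k}\geq 0 ,
\]
since every factor $c_{k}-c_{k+1}$ and every $D_{k}$ is nonnegative; this is \eqref{HLP3}. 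For the reversed statement, when \eqref{HLP1} is replaced by $y_{1}\leq\cdots\leq y_{n}$, the symmetric move is to support $f$ from below at the points $y_{k}$ instead: choose $c_{k}$ with $f(x_{k})-f(y_{k})\geq c_{k}(x_{k}-y_{k})=-c_{k}d_{k}$ and, again by monotonicity of the subdifferential, with $c_{1}\leq\cdots\leq c_{n}$. The same computation then yields $\sum_{k}f(x_{k})-\sum_{k}f(y_{k})\geq-\sum_{k=1}^{n-1}(c_{k}-c_{k+1})D_{k}\geq 0$, because now $c_{k}-c_{k+1}\leq 0$ while $D_{k}\geq 0$; hence the inequality goes the other way.

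I expect the only real obstacle to be a technical one: a real-valued convex $f$ need not possess a supporting line at an endpoint of $I$ lying in $I$ (think of $t\log t$ at $0$), so a finite $c_{k}$ may fail to exist when some $x_{k}$ is such an endpoint. This can be dispatched by a reduction: if the left endpoint $a$ of $I$ belongs to $I$ and occurs among the $x_{k}$, then $x_{1}\geq\cdots\geq x_{n}$ forces $x_{n}=a$, while the relations $D_{n-1}\geq 0=D_{n}$ together with $y_{n}\in I$ force $y_{n}=a$; deleting the equal pair $x_{n}=y_{n}$ (and iterating, at either endpoint, for repeated endpoint values) reduces the statement to families all of whose members lie where $f$ has finite one-sided derivatives. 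Alternatively, one may reduce to the elementary cases $f(t)=\pm t$ and $f(t)=(t-c)_{+}$ — for which \eqref{HLP3} is read off directly from \eqref{HLP2} and the equality at $m=n$ — and then pass to the limit, using that on any compact subinterval of $I$ a convex function is a pointwise limit of nonnegative combinations of such kernels.
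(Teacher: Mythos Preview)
Your argument is the standard Abel-summation proof of the Hardy--Littlewood--P\'olya theorem and is correct as written: the choice $c_{k}=f'_{+}(x_{k})$ (resp.\ $c_{k}=f'_{+}(y_{k})$ in the reversed case) gives the needed monotone slopes at interior points, the summation-by-parts identity is exact once $D_{0}=D_{n}=0$, and your endpoint reduction is sound (note that whenever you delete a matching endpoint pair, the remaining $D_{m}$'s inherit both the nonnegativity and the terminal equality, so the induction goes through cleanly at either end and in both versions of the hypothesis).

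As for comparison: the paper does not supply a proof of Theorem~\ref{thmHLP} at all. It is stated as background---the classical majorization theorem---and the reader is referred to \cite{NP2024} for details; the paper then uses it (and its Tomi\'{c}--Weyl corollary) as a black box to derive the truncated majorization results of Section~3. So there is no ``paper's proof'' to set yours against. What you have written is a self-contained proof in the spirit of the original Hardy--Littlewood--P\'olya argument, and it is perfectly adequate; your integral-representation alternative is also classical and would work, though it is unnecessary here since the supporting-line route already handles the endpoint issue by the deletion trick.
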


Pólya \cite{Pol1950} noticed that this result implies the other classical
result of majorization theory, precisely, the theorem of Tomi\'{c}
\cite{T1949} and Weyl \cite{W1949} on weak majorization:

\begin{corollary}
\emph{(The Tomi\'{c}-Weyl theorem)\label{cor_TW}} Let $f$ be a real-valued
nondecreasing convex function defined on a nonempty interval $I.$ Then the
inequality \emph{(\ref{HLP3})} holds for every two families $\mathbf{x}%
=(x_{k})_{k=1}^{n}$ and $\mathbf{y}=(y_{k})_{k=1}^{n}$ of points in $I$ that
verify the conditions \emph{(\ref{HLP1}) }and\emph{ (\ref{HLP2}).}

When condition \emph{(\ref{HLP1})} is replaced by $y_{1}\leq\cdots\leq y_{n},$
then the conclusion \emph{(\ref{HLP3})} works in the reverse direction.
\end{corollary}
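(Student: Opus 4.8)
The plan is to derive this weak-majorization statement from the (equality-constrained) Theorem \ref{thmHLP} by P\'{o}lya's device: absorb the defect $d=\sum_{k=1}^{n}y_{k}-\sum_{k=1}^{n}x_{k}\geq 0$ into a single coordinate of $\mathbf{y}$, lowering that coordinate until equality is restored at $m=n$ in (\ref{HLP2}); since $f$ is nondecreasing, lowering a coordinate of $\mathbf{y}$ can only decrease $\sum_{k}f(y_{k})$, so the inequality (\ref{HLP3}) survives the modification.

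Carrying this out, I would first note that $f$, being nondecreasing and convex on $I$, is the restriction of a nondecreasing convex function on all of $\mathbb{R}$ (extend it affinely past the endpoints of $I$, which is possible with nonnegative slopes precisely because $f$ is nondecreasing); so one may assume $I=\mathbb{R}$. Put $y'_{k}=y_{k}$ for $k=1,\dots,n-1$ and $y'_{n}=y_{n}-d$. Then $\sum_{k=1}^{m}y'_{k}=\sum_{k=1}^{m}y_{k}$ for $m\leq n-1$ while $\sum_{k=1}^{n}y'_{k}=\sum_{k=1}^{n}x_{k}$, so the pair $\mathbf{x},\mathbf{y}'$ satisfies all the hypotheses of Theorem \ref{thmHLP}: condition (\ref{HLP1}) is untouched, and the inequalities (\ref{HLP2}) for $\mathbf{y}'$ are the original ones for $m<n$ and an equality for $m=n$. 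Hence $\sum_{k=1}^{n}f(x_{k})\leq\sum_{k=1}^{n}f(y'_{k})$. On the other hand $y'_{k}\leq y_{k}$ for every $k$ (with equality for $k<n$), so monotonicity of $f$ gives $\sum_{k=1}^{n}f(y'_{k})\leq\sum_{k=1}^{n}f(y_{k})$; chaining the two inequalities proves (\ref{HLP3}).

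For the second assertion, when (\ref{HLP1}) is replaced by $y_{1}\leq\cdots\leq y_{n}$, I would run the symmetric argument: restore the equality at $m=n$ by adjusting a single coordinate of $\mathbf{x}$, invoke the reversed conclusion of Theorem \ref{thmHLP}, and again use monotonicity of $f$ to absorb the defect in the correct direction.

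The only step that is not purely formal is that the altered coordinate $y'_{n}=y_{n}-d$ may fall outside $I$; this is exactly what the reduction to $I=\mathbb{R}$ takes care of, and it is also the structural reason why ``$f$ nondecreasing'' is precisely the extra hypothesis that renders the equality clause of Theorem \ref{thmHLP} dispensable. I expect this endpoint bookkeeping --- together with keeping straight the direction of the adjustment in the reversed case --- to be the only mild difficulty; the heart of the argument is the one-line modification $\mathbf{y}\mapsto\mathbf{y}'$.
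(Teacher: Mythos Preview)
Your argument is correct and is precisely the approach the paper invokes: the paper does not give a detailed proof but attributes the derivation of Corollary~\ref{cor_TW} from Theorem~\ref{thmHLP} to P\'{o}lya and refers to \cite{NP2024} for details, and what you wrote is exactly P\'{o}lya's device of absorbing the slack $d=\sum y_k-\sum x_k$ into a single coordinate and then appealing to monotonicity of $f$. Your remark about extending $f$ beyond $I$ to accommodate $y'_n$ is the right way to handle the only nontrivial bookkeeping point.
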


The details can be found in \cite{NP2024}.

Of special interest for us will be the following\ "truncated forms" of
Corollary \ref{cor_TW} and Theorem \ref{thmHLP}.

\begin{theorem}
\label{thm_convex_trunc_maj}\emph{(Truncated majorization: the convex case)
}If $f:[0,\infty)\rightarrow\mathbb{R}$ is a nondecreasing convex function,
then for all integer numbers $2\leq m\leq n$ and all strings $x_{1}%
,\ldots,x_{n}$ and $y_{1},\ldots,y_{m}$ of positive numbers such that%
\begin{equation}
\max\left\{  \sum\limits_{p=1}^{k}x_{i_{p}}:i\,_{r}\neq i_{s}\right\}
\leq\max\left\{  \sum\limits_{p=1}^{k}y_{j_{p}}:j\,_{r}\neq j_{s}\right\}
\label{TW1}%
\end{equation}
for $k=1,...,m-1$ and
\begin{equation}
\sum_{k=1}^{n}x_{k}\leq\sum_{k=1}^{m}y_{k}, \label{TW2}%
\end{equation}
we have%
\begin{equation}
\sum_{k=1}^{n}f(x_{k})\leq\sum_{k=1}^{m}f(y_{k})+(n-m)f(0).
\label{ineq_HLP1_truncated}%
\end{equation}

\end{theorem}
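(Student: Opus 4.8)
The plan is to reduce the ``truncated'' statement to the genuine Tomi\'c--Weyl theorem (Corollary \ref{cor_TW}) by padding the shorter string $y_{1},\dots,y_{m}$ with zeros so that both strings have the same length. Concretely, set $\widetilde{y}_{k}=y_{k}$ for $k=1,\dots,m$ and $\widetilde{y}_{k}=0$ for $k=m+1,\dots,n$, and reorder both strings in decreasing order; after reordering, the first $m$ entries of $\widetilde{\mathbf y}$ are (a permutation of) the $y_{j}$'s and the remaining $n-m$ entries are $0$. Since $f$ is nondecreasing and convex and defined on $[0,\infty)$, Corollary \ref{cor_TW} applies to the pair $\mathbf{x}=(x_{k})_{k=1}^{n}$ and $\widetilde{\mathbf{y}}=(\widetilde{y}_{k})_{k=1}^{n}$ provided we verify the hypotheses (\ref{HLP1}) and (\ref{HLP2}) for this pair; its conclusion is exactly $\sum_{k=1}^{n}f(x_{k})\le\sum_{k=1}^{n}f(\widetilde{y}_{k})=\sum_{k=1}^{m}f(y_{k})+(n-m)f(0)$, which is (\ref{ineq_HLP1_truncated}).

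The first step is to arrange $x_{1}\ge\cdots\ge x_{n}$; this is harmless since both sides of (\ref{TW1})--(\ref{ineq_HLP1_truncated}) are symmetric in the $x_{k}$'s. Next I would translate the hypotheses (\ref{TW1})--(\ref{TW2}) into the partial-sum inequalities (\ref{HLP2}) for the decreasingly ordered pair. The key observation is that for a finite string of \emph{positive} numbers, the quantity $\max\{\sum_{p=1}^{k}x_{i_{p}}:i_{r}\neq i_{s}\}$ appearing in (\ref{TW1}) equals the sum of the $k$ largest entries, i.e. $x_{1}+\cdots+x_{k}$ after decreasing rearrangement; the same holds for the $y$'s, and also for the padded string $\widetilde{\mathbf y}$ as long as $k\le m$ (since all padded entries are $0$, which are the smallest). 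Hence (\ref{TW1}) for $k=1,\dots,m-1$ says precisely $\sum_{k=1}^{j}x_{k}^{\downarrow}\le\sum_{k=1}^{j}\widetilde{y}_{k}^{\downarrow}$ for $j=1,\dots,m-1$. For the range $j=m,\dots,n-1$ one must still check $\sum_{k=1}^{j}x_{k}^{\downarrow}\le\sum_{k=1}^{j}\widetilde{y}_{k}^{\downarrow}$; but for such $j$ the right-hand side already equals $\sum_{k=1}^{m}y_{k}$ (all further $\widetilde y$-entries being $0$), while the left-hand side is $\le\sum_{k=1}^{n}x_{k}\le\sum_{k=1}^{m}y_{k}$ by (\ref{TW2}) and positivity of the $x_{k}$'s. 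So (\ref{HLP2}) holds for every $m'=1,\dots,n$, with the case $m'=n$ being the inequality $\sum_{k=1}^{n}x_{k}\le\sum_{k=1}^{m}y_{k}$ from (\ref{TW2}) (note that here we only need the inequality form of Tomi\'c--Weyl, not the equality constraint of Hardy--Littlewood--P\'olya).

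The main obstacle, and the only genuinely delicate point, is the combinatorial identification in (\ref{TW1}): one must be careful that the ``max over injective selections'' formulation really does coincide with ``sum of the $k$ largest'' and that the padding with zeros does not disturb this for indices $k\le m$. Once that bookkeeping is in place, the proof is a direct invocation of Corollary \ref{cor_TW}. I would also remark that (\ref{ineq_HLP1_truncated}) may be rephrased as $\sum_{k=1}^{n}\bigl(f(x_{k})-f(0)\bigr)\le\sum_{k=1}^{m}\bigl(f(y_{k})-f(0)\bigr)$, i.e. applying Tomi\'c--Weyl to the normalized function $f-f(0)$, which makes the role of the $(n-m)f(0)$ term transparent and is the cleanest way to write the argument.
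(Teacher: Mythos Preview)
Your proposal is correct and follows exactly the paper's approach: pad the shorter string $(y_1,\dots,y_m)$ with $n-m$ zeros and invoke the Tomi\'c--Weyl theorem (Corollary~\ref{cor_TW}). The paper's own proof is a single line to this effect; you have simply (and correctly) spelled out the verification of the partial-sum hypotheses, including the observation that the ``max over injective selections'' in (\ref{TW1}) is the sum of the $k$ largest entries and that (\ref{TW2}) together with positivity handles the range $j=m,\dots,n$.
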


\begin{proof}
Apply Corollary \ref{cor_TW} to the families
\[
\mathbf{x}=(x_{1},x_{2},...,x_{n})\text{\quad and\quad}\mathbf{y}%
=(y_{1},...,y_{m},\underset{n-m\text{ times}}{\underbrace{0,...,0}}).
\]

\end{proof}

\begin{corollary}
\label{cor_HLP1}If $f:[0,\infty)\rightarrow\mathbb{R}$ is a nondecreasing
convex function and $x_{1},x_{2},\ldots,\allowbreak x_{n},y_{1},y_{2}$
$(n\geq2)$ are nonnegative numbers such that
\[
\max\{x_{1},x_{2},...,x_{n}\}\leq\max\{y_{1},y_{2}\}
\]
and
\[
\sum\limits_{k=1}^{n}x_{k}\leq y_{1}+y_{2},
\]
then%
\[
\sum\limits_{k=1}^{n}f(x_{k})\leq f(y_{1})+f(y_{2})+(n-2)f(0).
\]
In the case of nonincreasing concave functions defined on $[0,\infty),$ the
conclusion works in the reverse direction.
\end{corollary}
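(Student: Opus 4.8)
The plan is to derive Corollary \ref{cor_HLP1} directly from Theorem \ref{thm_convex_trunc_maj} by specializing $m=2$. First I would check that the hypotheses of the corollary match, in this case, the hypotheses (\ref{TW1}) and (\ref{TW2}) of the theorem. When $m=2$ the only value of $k$ in the range $1,\dots,m-1$ is $k=1$, so condition (\ref{TW1}) reduces to $\max\{x_{i}:1\le i\le n\}\le\max\{y_{j}:1\le j\le 2\}$, which is exactly the first hypothesis of the corollary. Condition (\ref{TW2}) becomes $\sum_{k=1}^{n}x_{k}\le y_{1}+y_{2}$, which is the second hypothesis. Hence Theorem \ref{thm_convex_trunc_maj} applies and yields $\sum_{k=1}^{n}f(x_{k})\le f(y_{1})+f(y_{2})+(n-2)f(0)$, which is the asserted inequality.

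For the second assertion, concerning a nonincreasing concave function $g:[0,\infty)\to\mathbb{R}$, the clean route is to apply the first part to $f=-g$. The function $-g$ is nondecreasing and convex, so under the same two hypotheses we obtain $\sum_{k=1}^{n}(-g)(x_{k})\le(-g)(y_{1})+(-g)(y_{2})+(n-2)(-g)(0)$; multiplying through by $-1$ reverses the inequality and gives $\sum_{k=1}^{n}g(x_{k})\ge g(y_{1})+g(y_{2})+(n-2)g(0)$, as claimed.

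The only point that deserves a word of care — and really the main (minor) obstacle — is the degenerate case $m=n$, where the sum $\sum_{k=1}^{m}y_{k}$ in (\ref{TW2}) has no padding zeros and the range $k=1,\dots,m-1$ in (\ref{TW1}) is handled by (\ref{TW1}) with $k$ up to $n-1$; but since the corollary fixes $m=2$, one has $n\ge m=2$ and the padding block $0,\dots,0$ in the proof of Theorem \ref{thm_convex_trunc_maj} has $n-2\ge 0$ entries, so no special treatment is needed. I would simply note that the case $n=2$ gives the empty padding block and the statement reduces to the monotone-convex inequality $f(x_{1})+f(x_{2})\le f(y_{1})+f(y_{2})$ already contained in Corollary \ref{cor_TW}.

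\begin{proof}
Apply Theorem \ref{thm_convex_trunc_maj} with $m=2$. Then the only relevant instance of (\ref{TW1}) is $k=1$, which reads $\max\{x_{1},\dots,x_{n}\}\le\max\{y_{1},y_{2}\}$, while (\ref{TW2}) reads $\sum_{k=1}^{n}x_{k}\le y_{1}+y_{2}$; both coincide with the hypotheses of the corollary. The conclusion (\ref{ineq_HLP1_truncated}) becomes $\sum_{k=1}^{n}f(x_{k})\le f(y_{1})+f(y_{2})+(n-2)f(0)$. If instead $g$ is nonincreasing and concave on $[0,\infty)$, then $f=-g$ is nondecreasing and convex, and applying the inequality just proved to $f$ and multiplying by $-1$ reverses it, giving $\sum_{k=1}^{n}g(x_{k})\ge g(y_{1})+g(y_{2})+(n-2)g(0)$.
\end{proof}
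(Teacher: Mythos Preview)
Your proof is correct and follows exactly the route the paper intends: Corollary~\ref{cor_HLP1} is stated immediately after Theorem~\ref{thm_convex_trunc_maj} with no separate proof, so the implicit argument is precisely the specialization $m=2$ that you carry out, and your treatment of the nonincreasing concave case via $f=-g$ is the standard reduction.
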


\begin{theorem}
\label{thm_concave_trunc_maj}\emph{(Truncated majorization: the concave case)}
Let $f:[0,\infty)\rightarrow\mathbb{R}$ be an nondecreasing concave function
and let $x_{1},x_{2},\ldots,x_{n},y_{1},y_{2},...,y_{m}$ $(2\leq m\leq n)$ be
nonnegative numbers such that
\begin{align*}
\max\{x_{1},x_{2},...,x_{n}\}  &  \leq\max\{y_{1},y_{2},...,y_{m}\},\\
\max\left\{  x_{i_{1}}+x_{i_{2}}:i\,_{1}\neq i_{2}\right\}   &  \leq
\max\{y_{j_{1}}+y_{j_{2}}:j_{1}\neq j_{2}\}\\
&  ...\\
\max\left\{  \sum\limits_{p=1}^{k}x_{i_{p}}:i\,_{r}\neq i_{s}\right\}   &
\leq\max\left\{  \sum\limits_{p=1}^{k}y_{j_{p}}:j\,_{r}\neq j_{s}\right\}
\end{align*}
for $k\leq m-1$ and
\[
\sum\limits_{i=1}^{n}x_{i}\geq\sum\limits_{j=1}^{m}y_{j}.
\]
\qquad Then%
\begin{equation}
\sum\limits_{i=1}^{n}f(x_{i})\geq\sum\limits_{j=1}^{m}f(y_{j})+(n-m)f(0).
\label{ineq_HLP2_truncated}%
\end{equation}

\end{theorem}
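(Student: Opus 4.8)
The plan is to mirror the strategy of Theorem~\ref{thm_convex_trunc_maj}: pad the shorter string with zeros and invoke the concave half of the Tomi\'{c}--Weyl theorem (the reverse-direction statement in Corollary~\ref{cor_TW}). Concretely, I would set
\[
\mathbf{x}=(x_{1},\dots,x_{n})\quad\text{and}\quad
\mathbf{y}=(y_{1},\dots,y_{m},\underset{n-m\text{ times}}{\underbrace{0,\dots,0}}),
\]
so that both are strings of $n$ nonnegative numbers, and try to show that $\mathbf{y}$ weakly majorizes $\mathbf{x}$ in the sense required by the reverse form of Corollary~\ref{cor_TW}. Since $f$ is nondecreasing and concave, that corollary would then give $\sum_{k=1}^{n}f(\tilde x_{k})\leq\sum_{k=1}^{n}f(\tilde y_{k})$ in the reverse direction, i.e. $\sum_{i=1}^{n}f(x_{i})\geq\sum_{j=1}^{m}f(y_{j})+(n-m)f(0)$, which is exactly \eqref{ineq_HLP2_truncated}.

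The first step is to arrange $\mathbf{x}$ and $\mathbf{y}$ in the order demanded by the hypothesis \eqref{HLP1} (or its replacement $y_{1}\le\cdots\le y_{n}$) — reindex so that $x_{1}\ge\cdots\ge x_{n}$; after padding, the zeros of $\mathbf{y}$ naturally sit at the tail, and I would also sort $y_{1},\dots,y_{m}$ into nonincreasing order. The second step is to verify the partial-sum conditions \eqref{HLP2}: for $k\le m-1$ the partial sum $\sum_{p=1}^{k}x_{(p)}$ of the $k$ largest entries of $\mathbf{x}$ equals the left-hand maximum in the $k$-th displayed hypothesis, and likewise $\sum_{p=1}^{k}\tilde y_{(p)}$ equals the right-hand maximum (padding by zeros does not change the sum of the $k$ largest entries as long as $k\le m$); for $m\le k\le n-1$ we have $\sum_{p=1}^{k}\tilde y_{(p)}=\sum_{j=1}^{m}y_{j}\ge\sum_{i=1}^{n}x_{i}\ge\sum_{p=1}^{k}x_{(p)}$ using the final hypothesis and nonnegativity; and for $k=n$ both total sums need only satisfy $\sum x_{i}\ge\sum y_{j}$, which is again the last hypothesis. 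Thus $\mathbf{y}$ weakly majorizes $\mathbf{x}$ from above in each partial sum, with no equality needed at $k=n$, which is precisely the setting of the reverse-direction Tomi\'{c}--Weyl corollary.

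The main technical obstacle is the bookkeeping that the displayed maxima over index sets with distinct indices really do compute the sums of the $k$ largest entries, and that the zero-padding interacts correctly with these maxima — in particular one must be careful when $k>m$, where the maximum of a $k$-element sum drawn from $\mathbf{y}$ (with only $m$ positive entries plus zeros) still equals $\sum_{j=1}^{m}y_{j}$, so the hypotheses for $k\le m-1$ together with the single cumulative inequality $\sum x_{i}\ge\sum y_{j}$ are enough to cover all $k$ up to $n$. Once that is checked, the rest is a direct appeal to Corollary~\ref{cor_TW}, exactly as in the proof of Theorem~\ref{thm_convex_trunc_maj}.
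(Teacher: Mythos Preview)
There is a genuine gap in your verification of the partial-sum conditions. For $m\le k\le n-1$ you assert that $\sum_{j=1}^{m}y_{j}\ge\sum_{i=1}^{n}x_{i}$, but the final hypothesis of the theorem is the \emph{reverse} inequality $\sum_{i=1}^{n}x_{i}\ge\sum_{j=1}^{m}y_{j}$. Consequently the padded string $\tilde{\mathbf{y}}=(y_{1},\dots,y_{m},0,\dots,0)$ need not weakly majorize $\mathbf{x}$ from above: for instance, with $n=3$, $m=2$, $\mathbf{x}=(1,1,1)$ and $\mathbf{y}=(2,\tfrac12)$ all the hypotheses of the theorem are satisfied, yet $\sum_{p=1}^{3}x_{(p)}=3>\tfrac52=\sum_{p=1}^{3}\tilde{y}_{(p)}$. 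Thus the condition \eqref{HLP2} fails at $k=n$ (and potentially already at $k=m$), so neither the forward nor the reverse form of Corollary~\ref{cor_TW} applies---and note that the reverse-direction statement there is still formulated for nondecreasing \emph{convex} functions, not for concave ones.

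The paper's proof supplies exactly the missing idea. Since $\sum_{i}x_{i}\ge\sum_{j}y_{j}$, one first \emph{increases} the $y_{j}$'s to $\tilde{y}_{j}\ge y_{j}$ so that $\sum_{j}\tilde{y}_{j}=\sum_{i}x_{i}$. After this adjustment the padded string $(\tilde{y}_{1},\dots,\tilde{y}_{m},0,\dots,0)$ genuinely majorizes $\mathbf{x}$ in the Hardy--Littlewood--P\'{o}lya sense (with equality at $k=n$), and Theorem~\ref{thmHLP} applied to the concave function $f$ yields $\sum_{i}f(x_{i})\ge\sum_{j}f(\tilde{y}_{j})+(n-m)f(0)$. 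Finally the monotonicity of $f$ gives $f(\tilde{y}_{j})\ge f(y_{j})$, which completes the argument. The enlargement step is essential precisely because the total-sum inequality goes the ``wrong way'' for a direct weak-majorization appeal.
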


\begin{proof}
Without loss of generality we may increase $y_{k}$ to $\tilde{y}_{k}$ (for
$k\in\{1,...,m\})$ so that
\[
\sum\limits_{k=1}^{n}x_{k}=\sum\limits_{j=1}^{m}\tilde{y}_{j}%
\]
and also we may assume that $x_{1}\geq x_{2}\geq x_{3}\geq\cdots\geq x_{n}$
and $\tilde{y}_{1}\geq\tilde{y}_{2}\geq\cdots\geq\tilde{y}_{m}$. Due to our
hypotheses $\tilde{y}_{1}\geq x_{1},$ $\tilde{y}_{1}+\tilde{y}_{2}\geq
x_{1}+x_{2},...,$ and $\tilde{y}_{1}+\tilde{y}_{2}+\cdots+\tilde{y}_{m}%
=\sum\limits_{k=1}^{n}x_{k}\geq x_{1}+\cdots+x_{m}.$ Therefore the string
$\mathbf{x}=(x_{1},x_{2},x_{3},...,x_{n})$ is majorized by $\mathbf{y}%
=(\tilde{y}_{1}+\tilde{y}_{2}+\cdots+\tilde{y}_{m},\underset{n-m\text{ times}%
}{\underbrace{0,...,0}})$ and the inequality (\ref{ineq_HLP2_truncated})
follows from the Hardy-Littlewood-Pólya theorem of majorization and the fact
that $f$ is nondecreasing.
\end{proof}

\begin{corollary}
\label{corHLP2}If $f:[0,\infty)\rightarrow\mathbb{R}$ is a nondecreasing
concave function and $x_{1},x_{2},\ldots,\allowbreak x_{n},y_{1},y_{2}$
$(n\geq2)$ are nonnegative numbers such that
\[
\max\{x_{1},x_{2},...,x_{n}\}\leq\max\{y_{1},y_{2}\}
\]
and
\[
\sum\limits_{k=1}^{n}x_{k}\geq y_{1}+y_{2},
\]
then%
\[
\sum\limits_{k=1}^{n}f(x_{k})\geq f(y_{1})+f(y_{2})+(n-2)f(0).
\]
In the case of nonincreasing convex functions defined on $[0,\infty),$ the
conclusion works in the reverse direction.
\end{corollary}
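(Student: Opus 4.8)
The plan is to derive Corollary \ref{corHLP2} as the special case $m=2$ of Theorem \ref{thm_concave_trunc_maj}, and then to handle the parenthetical remark about nonincreasing convex functions by a sign-reversal argument. First I would observe that with $m=2$ the chain of hypotheses in Theorem \ref{thm_concave_trunc_maj} collapses to a single inequality, namely the one indexed by $k=1$, which reads
\[
\max\{x_{1},x_{2},\dots,x_{n}\}\leq\max\{y_{1},y_{2}\};
\]
the remaining inequalities in the chain (those for $2\leq k\leq m-1$) are vacuous. Together with $\sum_{i=1}^{n}x_{i}\geq y_{1}+y_{2}$, these are exactly the hypotheses of the corollary, so Theorem \ref{thm_concave_trunc_maj} applies verbatim and yields
\[
\sum_{k=1}^{n}f(x_{k})\geq f(y_{1})+f(y_{2})+(n-2)f(0),
\]
which is the asserted conclusion.

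Next I would treat the reverse statement for nonincreasing convex functions. If $g:[0,\infty)\rightarrow\mathbb{R}$ is nonincreasing and convex, then $f=-g$ is nondecreasing and concave, so the first part applies to $f$ and gives $\sum_{k=1}^{n}f(x_{k})\geq f(y_{1})+f(y_{2})+(n-2)f(0)$; multiplying through by $-1$ converts this into $\sum_{k=1}^{n}g(x_{k})\leq g(y_{1})+g(y_{2})+(n-2)g(0)$, which is the reversed inequality. This completes the argument.

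There is essentially no obstacle here: the corollary is a direct specialization. The only point requiring a moment's care is checking that for $m=2$ the long list of max-sum hypotheses in Theorem \ref{thm_concave_trunc_maj} really does reduce to the two conditions stated in the corollary — i.e., that no hidden constraint for intermediate $k$ survives — but since the constraints run only over $k\leq m-1=1$, this is immediate. One could alternatively bypass Theorem \ref{thm_concave_trunc_maj} and argue directly: increase $y_{1},y_{2}$ to $\tilde y_{1}\geq\tilde y_{2}$ with $\tilde y_{1}+\tilde y_{2}=\sum_{k}x_{k}$ and $\tilde y_{1}=\max\{\tilde y_{1},\tilde y_{2}\}\geq\max_{k}x_{k}\geq x_{1}$ (after reordering $x_{1}\geq\cdots\geq x_{n}$), so that $(x_{1},\dots,x_{n})$ is majorized by $(\tilde y_{1}+\tilde y_{2},0,\dots,0)$, and then invoke the reverse form of Theorem \ref{thmHLP} for concave functions together with monotonicity of $f$ to pass from $\tilde y_{k}$ back to $y_{k}$; but deducing it from the already-proved Theorem \ref{thm_concave_trunc_maj} is cleaner.
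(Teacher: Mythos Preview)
Your main argument is correct and is exactly the paper's approach: Corollary~\ref{corHLP2} is stated immediately after Theorem~\ref{thm_concave_trunc_maj} with no separate proof, being simply the specialization $m=2$, and the reversed inequality for nonincreasing convex $g$ follows by applying the concave case to $f=-g$. One small slip in your parenthetical alternative: the majorizing string should be $(\tilde y_{1},\tilde y_{2},0,\dots,0)$ rather than $(\tilde y_{1}+\tilde y_{2},0,\dots,0)$, since the latter would only deliver $f(\tilde y_{1}+\tilde y_{2})+(n-1)f(0)$ on the right-hand side.
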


Theorem \ref{thm_concave_trunc_maj} and its Corollary \ref{corHLP2} apply to
functions such as $x^{\alpha}$ ($\alpha\in(0,1]),$ $\log(1+x),~\frac{x}{x+r}$
$(r\geq0)$ and $1-e^{-tx}$ $(t>0),$ all defined on $[0,\infty)$ and vanishing
at the origin.

A particular case of Corollary \ref{corHLP2} was noticed by Schötz \cite{Sch},
Lemma 47, using a different argument.

A nice illustration of Corollary \ref{cor_HLP1} (applied to the function
$f(x)=x^{p/q})$ is the following inequality noticed by Enflo \cite{Enflo1970}:
if $r_{1},r_{2},...,r_{6}$ are positive real numbers and $\max(r_{1}%
,\ldots,r_{4})\leq$ $\max(r_{5},r_{6})$ and $r_{1}^{q}+\cdots+r_{4}^{q}%
=r_{5}^{q}+r_{6}^{q},$ then%
\[
r_{1}^{p}+\cdots+r_{4}^{p}\leq r_{5}^{p}+r_{6}^{p}\quad\text{if }p\geq q.
\]
For applications to the functional inequalities see the next section.

\section{Functional inequalities via majorization}

The following inequality can be found in the book of Zhang \cite{Zhang},
Problem 36, p. 215: if $A,B,C$ are positive semidefinite matrices of the same
dimension, then%
\[
\det(A+B+C)+\det C\geq\det(A+C)+\det(B+C).
\]
Since $A+B+C\geq A+C,~B+C$ and the function $\det$ is nondecreasing on the
cone of positive semidefinite matrices, one can derive via Corollary
\ref{cor_HLP1} the following result.

\begin{theorem}
\label{thm_Zhang}If $f:[0,\infty)\rightarrow\mathbb{R}$ is a nondecreasing
convex function and $A,B$ and $C$ are positive semidefinite matrices then%
\[
f\left(  \det(A+B+C)\right)  +f\left(  \det C\right)  \geq f\left(
\det(A+C)\right)  +f\left(  \det(B+C)\right)  ,
\]
which \emph{(}by symmetrization\emph{)} leads to the following inequality,
\begin{multline*}
\frac{f\left(  \det A\right)  +f\left(  \det B\right)  +f\left(  \det
C\right)  }{3}+f\left(  \det(A+B+C)\right) \\
\geq\frac{2}{3}f\left(  \det(A+B)\right)  +f\left(  \det(B+C)\right)
+f\left(  \det(A+C)\right)  ,
\end{multline*}
that reminds us of Popoviciu's inequality \emph{\cite{Pop1965}}.
\end{theorem}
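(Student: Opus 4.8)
The plan is to read off the first inequality directly from Corollary~\ref{cor_HLP1} applied with $n=2$, and then to obtain the symmetrized form by adding three permuted copies of it. I would begin by isolating the two elementary facts about determinants that the argument requires. Since $\det$ is nondecreasing on the cone of positive semidefinite matrices and $A+B+C\geq A+C$ as well as $A+B+C\geq B+C$ in the Loewner order, we have
\[
\max\{\det(A+C),\det(B+C)\}\leq\det(A+B+C)\leq\max\{\det(A+B+C),\det C\},
\]
while Zhang's inequality quoted above, $\det(A+B+C)+\det C\geq\det(A+C)+\det(B+C)$, is exactly the required inequality between sums. Hence Corollary~\ref{cor_HLP1}, applied to the nonnegative numbers $x_{1}=\det(A+C)$, $x_{2}=\det(B+C)$, $y_{1}=\det(A+B+C)$, $y_{2}=\det C$ (so that $n=2$ and the term $(n-2)f(0)$ disappears), yields
\[
f\left(\det(A+C)\right)+f\left(\det(B+C)\right)\leq f\left(\det(A+B+C)\right)+f\left(\det C\right),
\]
which is the first assertion of the theorem.

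For the symmetrized inequality, I would observe that the first assertion is invariant under permuting the roles of $A,B,C$, since Zhang's inequality is symmetric in its three arguments and the $\max$-conditions needed for Corollary~\ref{cor_HLP1} again follow from the monotonicity of $\det$. Singling out $A$, then $B$, then $C$, one obtains
\begin{align*}
f\left(\det(A+B+C)\right)+f\left(\det A\right) &\geq f\left(\det(A+B)\right)+f\left(\det(A+C)\right),\\
f\left(\det(A+B+C)\right)+f\left(\det B\right) &\geq f\left(\det(A+B)\right)+f\left(\det(B+C)\right),\\
f\left(\det(A+B+C)\right)+f\left(\det C\right) &\geq f\left(\det(A+C)\right)+f\left(\det(B+C)\right).
\end{align*}
Adding these three inequalities and dividing by $3$ gives
\[
f\left(\det(A+B+C)\right)+\frac{f(\det A)+f(\det B)+f(\det C)}{3}\geq\frac{2}{3}\Bigl(f(\det(A+B))+f(\det(B+C))+f(\det(A+C))\Bigr),
\]
the announced Popoviciu-type inequality.

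I do not expect any genuine analytic obstacle: the whole content of the theorem sits in Zhang's determinantal inequality (taken as given) and in Corollary~\ref{cor_HLP1}. The points that require attention are merely the verification of the hypotheses of Corollary~\ref{cor_HLP1} — the $\max$-condition, which reduces to monotonicity of the determinant on the positive semidefinite cone, and the sum-condition, which is Zhang's inequality — together with the bookkeeping in the threefold symmetrization, so that the coefficients $\tfrac13$ and $\tfrac23$ (the latter multiplying the entire three-term sum) come out correctly.
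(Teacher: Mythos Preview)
Your proposal is correct and follows exactly the route the paper indicates: the paper's proof is the single sentence before the theorem---use $A+B+C\geq A+C,\,B+C$ together with Zhang's determinantal inequality to feed Corollary~\ref{cor_HLP1} with $n=2$---and the word ``symmetrization'' for the second part, which you have unpacked accurately by adding the three permuted instances and dividing by~$3$. Your computation also makes explicit that the factor $\tfrac{2}{3}$ on the right-hand side of the symmetrized inequality is meant to multiply the entire three-term sum $f(\det(A+B))+f(\det(B+C))+f(\det(A+C))$, which is indeed what the symmetrization yields.
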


The case of the positive semidefinite matrices
\[
A=\left(
\begin{array}
[c]{cc}%
1 & 0\\
0 & 1
\end{array}
\right)  ,\quad B=\left(
\begin{array}
[c]{cc}%
1 & 0\\
0 & 2
\end{array}
\right)  ,\quad C=\left(
\begin{array}
[c]{cc}%
1 & 1\\
1 & 2
\end{array}
\right)  \quad
\]
and of the convex function $f(x)=x^{2},$ $x\geq0,$ shows that the conclusion
of Theorem \ref{thm_Zhang} cannot be strengthened to
\begin{multline*}
\frac{f\left(  \det A\right)  +f\left(  \det B\right)  +f\left(  \det
C\right)  }{3}+f\left(  \det(\frac{A+B+C}{3})\right) \\
\geq\frac{2}{3}\left(  f\left(  \det(\frac{A+B}{2})\right)  +f\left(
\det(\frac{B+C}{2})\right)  +f\left(  \det(\frac{A+C}{2})\right)  \right)  ,
\end{multline*}
that is, it is not a generalization of Popoviciu's inequality.

A partial generalization of Popoviciu's inequality for functions of a vector
variable can be found in \cite{N2021}. See also \cite{BNP2010}.

The statement of\emph{ the optimal }$2$\emph{-uniform convexity inequality}
stated below can be found in the paper of Ball, Carlen and Lieb \cite{BCL1994}%
,\emph{ }Proposition\emph{ }3:

\begin{proposition}
\label{prop2-unifconv}If $p\in(1,2]$ and $x$ and $y$ belong to an $L^{p}$
space $($or to the Schatten space $S^{p}(H))$, then%
\[
\left\Vert x\right\Vert _{p}^{2}+\left\Vert y\right\Vert _{p}^{2}%
\geq2\left\Vert \frac{x+y}{2}\right\Vert _{p}^{2}+2(p-1)\left\Vert \frac
{x-y}{2}\right\Vert _{p}^{2}.
\]
For $p\in\lbrack2,\infty),$ the inequality is reversed.

Here $\left\Vert \cdot\right\Vert _{p}$ denotes either the $L^{p}$ norm or the
Schatten norm of index $p$.
\end{proposition}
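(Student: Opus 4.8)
The plan is to prove the inequality completely for the commutative ($L^{p}$) case, by reducing it to a one-variable numerical estimate and then integrating, and to indicate for the Schatten case which extra non-commutative ingredient is required. First I would rewrite the claim in symmetric form: putting $u=\tfrac{x+y}{2}$ and $v=\tfrac{x-y}{2}$, so that $x=u+v$ and $y=u-v$, one must show
\[
\|u+v\|_{p}^{2}+\|u-v\|_{p}^{2}\geq 2\|u\|_{p}^{2}+2(p-1)\|v\|_{p}^{2}\qquad(1<p\leq 2),
\]
with the reverse inequality for $p\geq 2$. The heart of the matter is the two-point inequality: for all scalars $s,t$ (real or complex) and $1\leq p\leq 2$,
\[
\frac{|s+t|^{p}+|s-t|^{p}}{2}\ \geq\ \bigl(|s|^{2}+(p-1)|t|^{2}\bigr)^{p/2},
\]
with the reverse inequality for $p\geq 2$.

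To prove the two-point inequality I would first reduce it to nonnegative reals. For fixed $\sigma=|s|$ and $\tau=|t|$, writing $a=\sigma^{2}+\tau^{2}$ and $c=2\operatorname{Re}(s\bar t)$, one has $|s\pm t|^{2}=a\pm c$ with $c$ ranging over $[-2\sigma\tau,2\sigma\tau]$, while the right-hand side is $(\sigma^{2}+(p-1)\tau^{2})^{p/2}$, independent of $c$. Since $r\mapsto r^{p/2}$ is concave for $p\leq 2$ and convex for $p\geq 2$, the function $c\mapsto(a+c)^{p/2}+(a-c)^{p/2}$ is minimised (when $p\leq 2$), respectively maximised (when $p\geq 2$), at $|c|=2\sigma\tau$, i.e. when $s$ and $t$ are proportional reals. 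So it suffices, using homogeneity and the immediate case $s=0$, to take $s=1$, $t=r\geq 0$, i.e. to prove
\[
\frac{(1+r)^{p}+|1-r|^{p}}{2}\ \geq\ \bigl(1+(p-1)r^{2}\bigr)^{p/2}\qquad(1\leq p\leq 2),
\]
and its reverse for $p\geq 2$. This I would settle by a one-variable analysis: raised to the power $2/p$, the right-hand side equals $1+(p-1)r^{2}$ exactly while the left-hand side equals $1+(p-1)r^{2}+O(r^{4})$ near $r=0$, so the two sides agree at the origin to second order; one then controls the sign of the difference on $[0,1]$ and on $[1,\infty)$ by differentiation, together with the behaviour as $r\to\infty$, where the leading terms are $r^{p}$ and $(p-1)^{p/2}r^{p}$.

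Granting the two-point inequality, for $L^{p}(\mu)$ I would apply it pointwise with $s=u(\omega)$, $t=v(\omega)$ and integrate, obtaining
\[
\frac{\|u+v\|_{p}^{p}+\|u-v\|_{p}^{p}}{2}\ \geq\ \int\bigl(|u|^{2}+(p-1)|v|^{2}\bigr)^{p/2}\,d\mu\ =\ \bigl\| \, |u|^{2}+(p-1)|v|^{2}\, \bigr\|_{p/2}^{p/2}.
\]
Since $p/2\leq 1$, the reverse Minkowski inequality in $L^{p/2}$ gives $\bigl\| |u|^{2}+(p-1)|v|^{2}\bigr\|_{p/2}\geq \bigl\| |u|^{2}\bigr\|_{p/2}+(p-1)\bigl\| |v|^{2}\bigr\|_{p/2}=\|u\|_{p}^{2}+(p-1)\|v\|_{p}^{2}$, hence
\[
\frac{\|u+v\|_{p}^{p}+\|u-v\|_{p}^{p}}{2}\ \geq\ \bigl(\|u\|_{p}^{2}+(p-1)\|v\|_{p}^{2}\bigr)^{p/2}.
\]
Raising this to the power $2/p$ and combining with the power-mean inequality $\tfrac12(A^{2}+B^{2})\geq\bigl(\tfrac12(A^{p}+B^{p})\bigr)^{2/p}$ (valid since $2/p\geq 1$), applied with $A=\|u+v\|_{p}$ and $B=\|u-v\|_{p}$, yields the assertion. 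For $p\geq 2$ every step reverses --- the two-point inequality, the $L^{p/2}$-Minkowski step (now $p/2\geq 1$), and the power-mean step (now $2/p\leq 1$) --- which gives the reversed conclusion.

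The Schatten case $S^{p}(H)$ is where the real obstacle lies: the pointwise/eigenvalue reduction is unavailable, and its naive substitute fails, since for $q=p/2\leq 1$ the map $P\mapsto\operatorname{Tr}P^{q}$ on positive operators is subadditive rather than superadditive, so the trace analogue of the reverse Minkowski step points the wrong way. This is exactly the difficulty overcome by the sharp trace inequalities of Ball, Carlen and Lieb \cite{BCL1994}, which I would invoke here: one route is to pass to the dual $2$-uniform smoothness inequality for $S^{p'}$ with $p'=p/(p-1)\geq 2$, whose proof rests on their optimal estimates for $\operatorname{Tr}|A+B|^{p}+\operatorname{Tr}|A-B|^{p}$, the $1<p\leq 2$ and $p\geq 2$ Schatten statements being dual to each other. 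Thus for commutative $L^{p}$ spaces the argument above is self-contained modulo the elementary two-point inequality, whereas for the Schatten spaces the essential input --- and the main obstacle --- is this non-commutative trace machinery.
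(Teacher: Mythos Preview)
The paper does not prove this proposition at all: it is simply quoted as Proposition~3 of Ball, Carlen and Lieb \cite{BCL1994} and used as a black box. So there is no ``paper's own proof'' to compare against; what can be assessed is whether your argument actually establishes the cited result.

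For commutative $L^{p}$ your route is correct and is in fact the standard one. The reduction of the two-point inequality to collinear real scalars via concavity/convexity of $r\mapsto r^{p/2}$ is right, the pointwise integration plus reverse Minkowski in $L^{p/2}$ (for $p\le 2$) gives
\[
\tfrac12\bigl(\|u+v\|_{p}^{p}+\|u-v\|_{p}^{p}\bigr)\ \ge\ \bigl(\|u\|_{p}^{2}+(p-1)\|v\|_{p}^{2}\bigr)^{p/2},
\]
and the final power-mean step $\tfrac12(A^{2}+B^{2})\ge\bigl(\tfrac12(A^{p}+B^{p})\bigr)^{2/p}$ is applied in the correct direction. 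All three steps genuinely reverse for $p\ge 2$. The only place where your write-up is a sketch rather than a proof is the one-variable inequality $\tfrac12\bigl((1+r)^{p}+|1-r|^{p}\bigr)\ge\bigl(1+(p-1)r^{2}\bigr)^{p/2}$: your Taylor remark shows second-order contact at $r=0$, but ``controls the sign \dots\ by differentiation'' needs to be carried out (e.g.\ compare derivatives of both sides, or of their $2/p$-th powers, and use that the difference of derivatives has at most one sign change). This is routine calculus and is done in \cite{BCL1994}, so it is a gap in presentation rather than in strategy.

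For $S^{p}(H)$ you correctly identify the genuine obstruction: the reverse-Minkowski step has no naive trace analogue when $p/2\le 1$, and one must invoke the sharp trace inequalities of \cite{BCL1994} (or the dual $2$-uniform smoothness for $S^{p'}$). Since the paper itself only cites \cite{BCL1994} for the whole statement, your treatment of the Schatten case is at the same level of detail as the paper's.
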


Recall that the Schatten space of index $p\in\lbrack1,\infty),$ associated to
a Hilbert space $H,$ is the space $S^{p}(H)$ of all compact linear operators
$T:H\rightarrow H$ whose sequences $(s_{n}(T))_{n}$ of singular values belong
to $\ell^{p}.$ $S^{p}(H)$ is a Banach space with respect to the norm%
\[
\left\Vert T\right\Vert _{p}=\left(
{\displaystyle\sum\nolimits_{n}}
\left\vert s_{n}(T)\right\vert ^{p}\right)  ^{1/p}.
\]
See \cite{Simon} for a comprehensive presentation of the theory of these spaces.

As noticed that Pisier and Xu \cite{PX}, Theorem 5.3, the Proposition
\ref{prop2-unifconv} also works in the context of noncommutative $L^{p}$ spaces.

Corollary \ref{corHLP2} allows us to extend Proposition \ref{prop2-unifconv}
as follows:

\begin{theorem}
\label{thm-extension of 2-unif cv}\emph{(}The $2$-uniform convexity functional
inequality for $p\in(1,2])$ If $p\in(1,2]$ and $f:[0,\infty)\rightarrow
\mathbb{R}$ is a nondecreasing function such that $f(0)=0$ and $f(x^{1/2})$ is
convex, then%
\[
f\left(  \left\Vert x\right\Vert _{p}\right)  +f\left(  \left\Vert
y\right\Vert _{p}\right)  \geq2f\left(  \left\Vert \frac{x+y}{2}\right\Vert
_{p}\right)  +\lfloor2(p-1)\rfloor f\left(  \left\Vert \frac{x-y}%
{2}\right\Vert _{p}\right)  ,
\]
for all elements $x$ and $y$ belonging to an $L^{p}$ space, to a Schatten
space $S^{p}(H)$ or to a noncommutative $L^{p}$ space$.$

Here $\lfloor\cdot\rfloor$ denotes the floor function $(\lfloor x\rfloor$ $=$
the greatest integer less than or equal to $x).$
\end{theorem}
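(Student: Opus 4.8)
The plan is to deduce Theorem \ref{thm-extension of 2-unif cv} from Proposition \ref{prop2-unifconv} together with Corollary \ref{corHLP2} (the truncated majorization result for nondecreasing concave functions). First I would record the relevant feature of the given $f$: since $f$ is nondecreasing and $g(t)=f(t^{1/2})$ is convex, the function $F=g$ is nondecreasing and convex on $[0,\infty)$ with $F(0)=f(0)=0$, and $f(s)=F(s^{2})$ for $s\ge 0$. So it suffices to prove the inequality for $F$ evaluated at squared norms, i.e. to show
\[
F\!\left(\left\Vert x\right\Vert _{p}^{2}\right)+F\!\left(\left\Vert y\right\Vert _{p}^{2}\right)\geq 2F\!\left(\left\Vert \tfrac{x+y}{2}\right\Vert _{p}^{2}\right)+\lfloor 2(p-1)\rfloor\, F\!\left(\left\Vert \tfrac{x-y}{2}\right\Vert _{p}^{2}\right).
\]
Equivalently, writing $m=\lfloor 2(p-1)\rfloor\in\{1,2\}$ (since $p\in(1,2]$ forces $2(p-1)\in(0,2]$), I want a reverse inequality $\sum F(y_j)\le\sum F(x_k)$, which is exactly the format of Corollary \ref{corHLP2} provided $F$ is \emph{nonincreasing} there — but the corollary's last sentence covers precisely the case of \emph{nondecreasing convex} functions, giving the reversed conclusion. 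That is the version I will invoke.

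Next I would set up the two finite sequences. On the "large sum" side take the two numbers
\[
x_{1}=\left\Vert x\right\Vert _{p}^{2},\qquad x_{2}=\left\Vert y\right\Vert _{p}^{2},
\]
and on the "small sum" side take the $2+m$ numbers consisting of $\left\Vert \tfrac{x+y}{2}\right\Vert _{p}^{2}$ twice and $\left\Vert \tfrac{x-y}{2}\right\Vert _{p}^{2}$ repeated $m$ times. The sum condition is delivered by Proposition \ref{prop2-unifconv}: since $p-1\ge m/2$ we have $2(p-1)\left\Vert \tfrac{x-y}{2}\right\Vert _{p}^{2}\ge m\left\Vert \tfrac{x-y}{2}\right\Vert _{p}^{2}$, hence
\[
\left\Vert x\right\Vert _{p}^{2}+\left\Vert y\right\Vert _{p}^{2}\ \ge\ 2\left\Vert \tfrac{x+y}{2}\right\Vert _{p}^{2}+m\left\Vert \tfrac{x-y}{2}\right\Vert _{p}^{2},
\]
i.e. $\sum x_k\ge\sum y_j$. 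The max condition requires checking that each of $\left\Vert \tfrac{x+y}{2}\right\Vert _{p}^{2}$ and $\left\Vert \tfrac{x-y}{2}\right\Vert _{p}^{2}$ is at most $\max\{\left\Vert x\right\Vert _{p}^{2},\left\Vert y\right\Vert _{p}^{2}\}$; this follows because $\left\Vert \tfrac{x\pm y}{2}\right\Vert _{p}\le\tfrac12\left\Vert x\right\Vert _{p}+\tfrac12\left\Vert y\right\Vert _{p}\le\max\{\left\Vert x\right\Vert _{p},\left\Vert y\right\Vert _{p}\}$ by the triangle inequality, and squaring preserves the order. With both hypotheses of Corollary \ref{corHLP2} verified, the corollary (in its "nondecreasing convex, reversed" form, with $n=2$ on the $x$-side and the $2+m$-element $y$-side) yields $\sum F(x_k)\ge\sum F(y_j)+ (2-(2+m))F(0)=\sum F(y_j)$ because $F(0)=0$; substituting $F(s^2)=f(s)$ gives exactly the claimed inequality.

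The one point that needs care — and the closest thing to an obstacle — is matching the orientation of the inequality in Corollary \ref{corHLP2}, which is stated for $n\ge 2$ numbers on the left and exactly two on the right, whereas here I need the opposite configuration (two on the left, more on the right) and the reversed conclusion. The clean way to handle this is to observe that Corollary \ref{corHLP2} is itself a specialization of Theorem \ref{thm_concave_trunc_maj}, and the symmetric statement I need — nondecreasing convex $f$, with $n$ numbers dominating in sum a list of $m\ge n$ numbers whose partial maxima are controlled, yielding $\sum f(x_i)\ge\sum f(y_j)+(n-m)f(0)$ reversed appropriately — is obtained by the same one-line padding argument used in the proof of Theorem \ref{thm_concave_trunc_maj}: pad $\mathbf{x}$ with zeros, reduce to Hardy–Littlewood–Pólya. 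Alternatively, and perhaps more transparently for the reader, I would simply apply Corollary \ref{cor_HLP1} to the convex function $F$ with the roles interchanged (two "small" numbers regarded as the $y$'s and the list $\{\left\Vert \tfrac{x+y}{2}\right\Vert _{p}^{2},\left\Vert \tfrac{x+y}{2}\right\Vert _{p}^{2},\left\Vert \tfrac{x-y}{2}\right\Vert _{p}^{2},\dots\}$ regarded as the $x$'s) — but that requires $n=2$ on the right, which fails when $m=2$. Hence the genuinely correct tool is the symmetric form of Theorem \ref{thm_concave_trunc_maj} for convex functions, and I would state and use it explicitly. Everything else is a routine verification.
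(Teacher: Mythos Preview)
Your setup is exactly the paper's: pass to the nondecreasing convex function $F(t)=f(t^{1/2})$ with $F(0)=0$, verify the max condition $\left\Vert\frac{x\pm y}{2}\right\Vert_p^{2}\le\max\{\Vert x\Vert_p^{2},\Vert y\Vert_p^{2}\}$ via the triangle inequality, and verify the sum condition $2\left\Vert\frac{x+y}{2}\right\Vert_p^{2}+m\left\Vert\frac{x-y}{2}\right\Vert_p^{2}\le\Vert x\Vert_p^{2}+\Vert y\Vert_p^{2}$ via Proposition~\ref{prop2-unifconv} and $m=\lfloor 2(p-1)\rfloor\le 2(p-1)$. The paper then simply invokes Corollary~\ref{cor_HLP1} and is done.

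Your last paragraph manufactures a difficulty that is not there. Corollary~\ref{cor_HLP1} is stated for a nondecreasing convex function, with $n\ge 2$ numbers on the ``many'' side and exactly two numbers $y_{1},y_{2}$ on the other. That is precisely your configuration once you label things correctly: take $y_{1}=\Vert x\Vert_{p}^{2}$, $y_{2}=\Vert y\Vert_{p}^{2}$ (these are the \emph{large} pair, not the ``small'' one), and let the $n=2+m$ numbers $x_{1},\dots,x_{n}$ be the two copies of $\left\Vert\frac{x+y}{2}\right\Vert_{p}^{2}$ together with the $m$ copies of $\left\Vert\frac{x-y}{2}\right\Vert_{p}^{2}$. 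Both hypotheses of Corollary~\ref{cor_HLP1} are exactly what you checked, and its conclusion with $F(0)=0$ is the desired inequality. Nothing ``fails when $m=2$''; there is no need to invoke Corollary~\ref{corHLP2} or to formulate a new symmetric version of Theorem~\ref{thm_concave_trunc_maj}. (You initially set $x_{1}=\Vert x\Vert_{p}^{2}$, $x_{2}=\Vert y\Vert_{p}^{2}$, reversing the roles of the $x$'s and $y$'s relative to the corollary; that swap is the sole source of the apparent obstacle.)

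One minor correction: for $p\in(1,2]$ one has $2(p-1)\in(0,2]$, so $m=\lfloor 2(p-1)\rfloor\in\{0,1,2\}$, not $\{1,2\}$. The case $m=0$ is covered by the same argument with $n=2$ (or, even more simply, by the convexity of $f\circ\Vert\cdot\Vert_{p}$).
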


According to Remark \ref{rem_cv of f (sqrt x)} \emph{(b)}, the hypotheses of
Theorem \ref{thm-extension of 2-unif cv} are fulfilled by every nondecreasing
and differentiable function $f:[0,\infty)\rightarrow\mathbb{R}$ such that
$f(0)=0,$ $f^{\prime}$ is convex and $f^{\prime}(0)=0.$

\begin{proof}
Clearly%
\[
\left\Vert \frac{x\pm y}{2}\right\Vert _{p}^{2}\leq\left(  \frac{\left\Vert
x\right\Vert _{p}+\left\Vert y\right\Vert _{p}}{2}\right)  ^{2}\leq
\max\left\{  \left\Vert x\right\Vert _{p}^{2},\left\Vert y\right\Vert _{p}%
^{2}\right\}
\]
and%
\[
\left\Vert \frac{x+y}{2}\right\Vert _{p}^{2}+\left\Vert \frac{x-y}%
{2}\right\Vert _{p}^{2}\leq\left\Vert x\right\Vert _{p}^{2}+\left\Vert
y\right\Vert _{p}^{2},
\]
so the conclusion follows from Corollary \ref{cor_HLP1}.
\end{proof}

The case $p>2$ of Theorem \ref{thm-extension of 2-unif cv} makes the objective
of Theorem \ref{thm-extension of 2-unif cv p>=2} in Section 6.

The next consequence of Corollary \ref{corHLP2} makes use of the following
special case of the parallelogram law: for all real numbers $a$ and $b,$%
\[
\left\vert a\right\vert ^{2}+\left\vert b\right\vert ^{2}=2\left\vert
\frac{a-b}{2}\right\vert ^{2}+2\left\vert \frac{a+b}{2}\right\vert ^{2}.
\]

\begin{corollary}
\label{cor_Clarkson}Suppose that $f:[0,\infty)\rightarrow\lbrack0,\infty)$ is
a nondecreasing, convex, and $3$-concave function such that $f(0)=0$. Then for
all real numbers $a$ and $b,$%
\[
f\left(  |a|\right)  +f\left(  \left\vert b\right\vert \right)  \leq2f\left(
\left\vert \frac{a-b}{2}\right\vert \right)  +2f\left(  \left\vert \frac
{a+b}{2}\right\vert \right)  .
\]
In particular, for all $\alpha\in\lbrack1,2],$
\[
|a|^{\alpha}+\left\vert b\right\vert ^{\alpha}\leq2\left\vert \frac{a-b}%
{2}\right\vert ^{\alpha}+2\left\vert \frac{a+b}{2}\right\vert ^{\alpha}.
\]

\end{corollary}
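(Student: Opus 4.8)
The plan is to deduce Corollary~\ref{cor_Clarkson} directly from Corollary~\ref{corHLP2} by choosing the right four-element and two-element strings of nonnegative numbers. Set
\[
x_{1}=\left\vert\frac{a-b}{2}\right\vert,\quad x_{2}=\left\vert\frac{a-b}{2}\right\vert,\quad x_{3}=\left\vert\frac{a+b}{2}\right\vert,\quad x_{4}=\left\vert\frac{a+b}{2}\right\vert,
\]
and $y_{1}=|a|$, $y_{2}=|b|$, so that $n=4$. First I would verify the hypothesis $\max\{x_{1},x_{2},x_{3},x_{4}\}\leq\max\{y_{1},y_{2}\}$. This amounts to showing that each of $\left\vert\frac{a\pm b}{2}\right\vert$ is at most $\max\{|a|,|b|\}$, which follows from the triangle inequality: $\left\vert\frac{a\pm b}{2}\right\vert\leq\frac{|a|+|b|}{2}\leq\max\{|a|,|b|\}$. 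Next I would check the sum condition $\sum_{k=1}^{4}x_{k}\geq y_{1}+y_{2}$; by the parallelogram law recalled just before the statement,
\[
2\left\vert\frac{a-b}{2}\right\vert^{2}+2\left\vert\frac{a+b}{2}\right\vert^{2}=|a|^{2}+|b|^{2},
\]
but what is needed is an inequality on the first powers, not the squares. Here I would invoke the truncated \emph{concave} machinery differently: apply Corollary~\ref{corHLP2} not to the raw numbers but observe that the correct comparison is already encoded if we instead compare $\{x_k^2\}$ with $\{y_i^2\}$ and use that $f(x^{1/2})$-type reasoning; more cleanly, I would apply Corollary~\ref{cor_HLP1} or Corollary~\ref{corHLP2} to the squared entries together with a function $g$ with $g(t)=f(\sqrt{t})$.

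More precisely, the cleaner route is: put $\xi_{1}=\xi_{2}=\left\vert\frac{a-b}{2}\right\vert^{2}$, $\xi_{3}=\xi_{4}=\left\vert\frac{a+b}{2}\right\vert^{2}$ and $\eta_{1}=|a|^{2}$, $\eta_{2}=|b|^{2}$. Then $\max\{\xi_{1},\ldots,\xi_{4}\}\leq\max\{\eta_{1},\eta_{2}\}$ (same triangle-inequality check, squared), and $\sum\xi_{k}=\eta_{1}+\eta_{2}$ exactly, by the parallelogram identity. The function $g(t)=f(\sqrt{t})$ on $[0,\infty)$ is nondecreasing since $f$ is; and it is concave: this is exactly Theorem~\ref{thm-comp} with $\alpha=1/2$ applied to the nondecreasing continuous $3$-concave function $f$ (here one uses Corollary~\ref{corH_P_S} to know $f\in\mathcal{S}$, hence $f$ is continuous and $3$-concave on $[0,\infty)$, and $f(0)=0$ makes $g(0)=0$ too). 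Therefore Corollary~\ref{corHLP2} applied to the nondecreasing concave function $g$ with $n=4$, $m=2$ gives
\[
\sum_{k=1}^{4}g(\xi_{k})\geq g(\eta_{1})+g(\eta_{2})+(4-2)g(0),
\]
i.e.
\[
2f\left(\left\vert\frac{a-b}{2}\right\vert\right)+2f\left(\left\vert\frac{a+b}{2}\right\vert\right)\geq f(|a|)+f(|b|)+2f(0)=f(|a|)+f(|b|),
\]
which is precisely the claimed inequality. For the particular case, take $f(x)=x^{\alpha}$ with $\alpha\in[1,2]$; this lies in $\mathcal{S}_{0}$ (it is nondecreasing, convex, differentiable, $f(0)=0$, and $f'(x)=\alpha x^{\alpha-1}$ is concave since $\alpha-1\in[0,1]$), so by Corollary~\ref{corH_P_S} it is nondecreasing, convex and $3$-concave and the general inequality applies verbatim.

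The main obstacle is purely a matter of packaging: one must pass from the parallelogram \emph{identity} for squares to the desired inequality for $f$ of the first powers, and the correct way to do this is to feed the squared quantities into the majorization machinery together with the substituted function $g(t)=f(\sqrt t)$ rather than trying to force the unsquared values to satisfy a sum inequality they need not satisfy. The only nontrivial input is the concavity of $g$, and that is already furnished by Theorem~\ref{thm-comp} at $\alpha=1/2$ — note it is exactly the borderline allowed value, and the failure at $\alpha=2$ (the stated counterexample $f(x)=x^{2}$) is consistent with the fact that then $g(t)=t$ is merely affine, so the inequality degenerates to the parallelogram identity with equality. I would also remark that the two ``max'' conditions of Corollary~\ref{corHLP2} beyond $k=1$ are vacuous here because $m=2$, so only the single constraint on $\max\{\xi_k\}$ and the total-sum constraint need checking.
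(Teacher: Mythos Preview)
Your proof is correct and follows essentially the same route as the paper: apply Corollary~\ref{corHLP2} to the squared quantities $\xi_{1}=\xi_{2}=\left\vert\frac{a-b}{2}\right\vert^{2}$, $\xi_{3}=\xi_{4}=\left\vert\frac{a+b}{2}\right\vert^{2}$, $\eta_{1}=|a|^{2}$, $\eta_{2}=|b|^{2}$ together with the nondecreasing concave function $g(t)=f(\sqrt{t})$ furnished by Theorem~\ref{thm-comp}. The opening paragraph with the unsquared $x_{k}$'s is a dead end (as you yourself note, the sum condition fails there) and could simply be deleted; what remains is exactly the paper's argument.
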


See Corollary \ref{cor_alfa power} below for an extension to the framework of
Banach spaces.

\begin{proof}
According to Theorem \ref{thm-comp}, the function $g(x)=f(x^{1/2})$ is
concave. As a consequence, the result of Corollary \ref{cor_Clarkson} follows
from Corollary \ref{corHLP2}, when applied to the function $g$ and to the
elements%
\begin{align*}
y_{1}  &  =\left\vert a\right\vert ^{2},~y_{2}=\left\vert b\right\vert
^{2},\text{ }\\
x_{1}  &  =x_{2}=\left\vert \frac{a-b}{2}\right\vert ^{2}\text{ and }%
x_{3}=x_{4}=\left\vert \frac{a+b}{2}\right\vert ^{2}.
\end{align*}

\end{proof}

Hanner's inequalities for $L^{p}$ spaces with $p\in(1,2]$ assert that
\begin{equation}
(\Vert u\Vert_{p}+\Vert v\Vert_{p})^{p}+\bigl\vert\Vert u\Vert_{p}-\Vert
v\Vert_{p}\bigr\vert^{p}\leq\Vert u+v\Vert_{p}^{p}+\Vert u-v\Vert_{p}^{p},
\label{eqHanner1-2}%
\end{equation}
while for $p\in\lbrack2,\infty)$ these inequalities work in the reversed
direction. See \cite{NP2018}, p. 139 for details.

The argument of Corollary \ref{cor_Clarkson} can be easily adapted to obtain
the following generalization of Hanner's aforementioned inequalities$:$

\begin{theorem}
\label{thm_Hanner p<=2}$($The generalization of Hanner's inequalities for
$p\in(1,2])$ Let $f:[0,\infty)\rightarrow\mathbb{R}$ be a nondecreasing convex
function such that $f(0)=0$ and $f(x^{1/p})$ is concave for some $p\in(1,2].$
Then%
\[
f\left(  \Vert u\Vert_{p}+\Vert u\Vert_{p}\right)  +f\left(  \left\vert \Vert
u\Vert_{p}-\Vert u\Vert_{p}\right\vert \right)  \leq f\left(  \Vert
u+v\Vert_{p}\right)  +f\left(  \Vert u-v\Vert_{p}\right)
\]
for all $u,v$ belonging to a Lebesgue space $L^{p}(\mu).$
\end{theorem}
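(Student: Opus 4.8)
The plan is to mimic the proof of Corollary \ref{cor_Clarkson}, replacing the
parallelogram identity for real numbers by the classical Hanner inequalities
of Proposition-type \eqref{eqHanner1-2}. Set $a=\Vert u\Vert_{p}$ and
$b=\Vert v\Vert_{p}$, and introduce the four nonnegative scalars
\[
y_{1}=\Vert u+v\Vert_{p}^{p},\quad y_{2}=\Vert u-v\Vert_{p}^{p},\quad
x_{1}=(a+b)^{p},\quad x_{2}=\bigl\vert a-b\bigr\vert^{p}.
\]
The content of Hanner's inequalities for $p\in(1,2]$ is precisely that
$x_{1}+x_{2}\le y_{1}+y_{2}$. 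I also need the "max" hypothesis of Corollary
\ref{cor_HLP1}, namely $\max\{x_{1},x_{2}\}\le\max\{y_{1},y_{2}\}$; since
$x_{2}\le x_{1}=(a+b)^{p}$ it suffices to check $(a+b)^{p}\le
\max\{\Vert u+v\Vert_{p}^{p},\Vert u-v\Vert_{p}^{p}\}$, which I expect to be
the genuinely new ingredient to verify — see the next paragraph.

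Granting those two inequalities, the proof concludes by applying Corollary
\ref{cor_HLP1} in its reversed (nonincreasing-concave) form, but it is cleaner
to apply Corollary \ref{corHLP2} directly to the function $g(t)=f(t^{1/p})$,
which is nondecreasing (as a composition of nondecreasing maps) and concave by
hypothesis, and to the scalars $x_{1},x_{2},y_{1},y_{2}$ above (here $n=m=2$,
so the $(n-m)f(0)$ term disappears and one does not even need $f(0)=0$ for this
step). Since $g(x_{1})=f(a+b)$, $g(x_{2})=f(\vert a-b\vert)$,
$g(y_{1})=f(\Vert u+v\Vert_{p})$ and $g(y_{2})=f(\Vert u-v\Vert_{p})$,
Corollary \ref{corHLP2} yields
\[
f\!\left(a+b\right)+f\!\left(\bigl\vert a-b\bigr\vert\right)\le
f\!\left(\Vert u+v\Vert_{p}\right)+f\!\left(\Vert u-v\Vert_{p}\right),
\]
which is exactly the asserted inequality (modulo the evident typos $\Vert
u\Vert_{p}$ for $\Vert v\Vert_{p}$ in the statement). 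The role of the
hypothesis $f(0)=0$ is only to ensure $f\ge0$ on the relevant range via
monotonicity, so that $g$ maps into $[0,\infty)$ as required by Corollary
\ref{corHLP2}; strictly one could dispense with it here.

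The main obstacle is the elementary inequality
$(a+b)^{p}\le\max\{\Vert u+v\Vert_{p}^{p},\Vert u-v\Vert_{p}^{p}\}$, i.e.
$\Vert u\Vert_{p}+\Vert v\Vert_{p}\le\max\{\Vert u+v\Vert_{p},\Vert
u-v\Vert_{p}\}$. This is \emph{not} true in a general normed space, so one
must use the $L^{p}$ structure. The way I would argue it: by the triangle
inequality $2\Vert u\Vert_{p}=\Vert(u+v)+(u-v)\Vert_{p}\le\Vert u+v\Vert_{p}
+\Vert u-v\Vert_{p}$ and likewise $2\Vert v\Vert_{p}\le\Vert u+v\Vert_{p}
+\Vert u-v\Vert_{p}$; adding these gives only $\Vert u\Vert_{p}+\Vert
v\Vert_{p}\le\Vert u+v\Vert_{p}+\Vert u-v\Vert_{p}$, which is weaker than what
is needed. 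The sharper bound uses uniform convexity of $L^{p}$ for
$p\in(1,2]$: Clarkson's inequality gives $\Vert u+v\Vert_{p}^{p'}+\Vert
u-v\Vert_{p}^{p'}\le 2(\Vert u\Vert_{p}^{p}+\Vert v\Vert_{p}^{p})^{p'/p}$
with $p'=p/(p-1)$, whence at least one of $\Vert u\pm v\Vert_{p}$ is
$\ge(\Vert u\Vert_{p}^{p}+\Vert v\Vert_{p}^{p})^{1/p}\ge\tfrac{1}{2^{1/p'}}
(\Vert u\Vert_{p}+\Vert v\Vert_{p})$; combined with $\Vert u+v\Vert_{p}+\Vert
u-v\Vert_{p}\ge 2\max\{\Vert u\Vert_{p},\Vert v\Vert_{p}\}$ and a short case
analysis one recovers the needed estimate. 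Alternatively — and this is likely
the intended route — one notes that Hanner's inequality itself already forces
$\max\{\Vert u+v\Vert_{p},\Vert u-v\Vert_{p}\}^{p}\ge\tfrac12(x_{1}+x_{2})\ge
\tfrac12 x_{1}\cdot 1$; but to get the full $x_{1}$ one instead invokes the
companion lower estimate $\Vert u+v\Vert_{p}^{p}+\Vert u-v\Vert_{p}^{p}\ge
(\Vert u\Vert_{p}+\Vert v\Vert_{p})^{p}+\bigl\vert\Vert u\Vert_{p}-\Vert
v\Vert_{p}\bigr\vert^{p}$ together with the fact that the larger of $\Vert
u\pm v\Vert_{p}$ is at least $\Vert u\Vert_{p}+\Vert v\Vert_{p}$ whenever the
two vectors point "the same way" in the relevant sense; the cleanest honest
proof treats the scalar case first (where it is the hypothesis $x_{1}+x_{2}\le
y_{1}+y_{2}$ with $x_{2}\le y_{2}$ automatically, forcing $x_{1}\le y_{1}$)
and this already supplies $\max\{x_{1},x_{2}\}=x_{1}\le y_{1}\le
\max\{y_{1},y_{2}\}$. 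In fact this last observation settles the matter with no
uniform-convexity input at all: once I know $x_{1}+x_{2}\le y_{1}+y_{2}$ and,
from $|a-b|\le\bigl\vert\,\Vert u\Vert_{p}-\Vert v\Vert_{p}\bigr\vert$ being
an \emph{equality} here so that $x_{2}=\bigl\vert\Vert u\Vert_{p}-\Vert
v\Vert_{p}\bigr\vert^{p}$ and Hanner gives $x_{2}\le y_{2}$ separately (the
reverse Hanner-type bound $\bigl\vert\Vert u\Vert_{p}-\Vert v\Vert_{p}
\bigr\vert\le\min\{\Vert u+v\Vert_{p},\Vert u-v\Vert_{p}\}$ from the triangle
inequality), I deduce $x_{1}\le y_{1}+y_{2}-x_{2}\le y_{1}+(y_{2}-x_{2})$ —
not quite $x_{1}\le\max\{y_{1},y_{2}\}$ in general. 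So the residual technical
point is exactly this comparison, and I would settle it via the triangle-
inequality bound $\max\{\Vert u+v\Vert_{p},\Vert u-v\Vert_{p}\}\ge\Vert
u\Vert_{p}+\Vert v\Vert_{p}$, which does hold in \emph{any} normed space: from
$2u=(u+v)+(u-v)$ and $2v=(u+v)-(u-v)$ one gets $2\max\{\Vert u\Vert_{p},\Vert
v\Vert_{p}\}\le 2\max\{\Vert u+v\Vert_{p},\Vert u-v\Vert_{p}\}$, hence — after
noting $\Vert u\Vert_{p}+\Vert v\Vert_{p}\le 2\max\{\Vert u\Vert_{p},\Vert
v\Vert_{p}\}$ — one still only gets a factor-$2$ loss; the sharp statement
genuinely needs $p\le 2$, and the honest one-line proof is: square-free, apply
Hanner to the pair $(u,v)$ and also to $\bigl(\tfrac{u+v}{2},\tfrac{u-v}{2}
\bigr)$ and combine. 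I will present the argument in the scalar-reduction form,
flagging this comparison as the one place where $p\in(1,2]$ and the Hanner
inequality (rather than mere convexity) are used.
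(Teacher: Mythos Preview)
Your overall strategy---reduce to Corollary \ref{corHLP2} applied to $g(t)=f(t^{1/p})$---is exactly the paper's intended argument. The problem is that you have interchanged the roles of the $x$'s and the $y$'s when invoking Corollary \ref{corHLP2}, and this is what creates your ``main obstacle.'' In Corollary \ref{corHLP2} the hypotheses are
\[
\max\{x_{1},\dots,x_{n}\}\le\max\{y_{1},y_{2}\}\quad\text{and}\quad\sum x_{k}\ge y_{1}+y_{2},
\]
and the conclusion is $\sum g(x_{k})\ge g(y_{1})+g(y_{2})$. Since you want
\[
g\bigl((a+b)^{p}\bigr)+g\bigl(|a-b|^{p}\bigr)\le g\bigl(\Vert u+v\Vert_{p}^{p}\bigr)+g\bigl(\Vert u-v\Vert_{p}^{p}\bigr),
\]
the correct assignment is the \emph{opposite} of what you wrote: take the corollary's $x_{1},x_{2}$ to be $\Vert u+v\Vert_{p}^{p},\Vert u-v\Vert_{p}^{p}$ and the corollary's $y_{1},y_{2}$ to be $(a+b)^{p},|a-b|^{p}$. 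With this labeling the sum condition $\Vert u+v\Vert_{p}^{p}+\Vert u-v\Vert_{p}^{p}\ge(a+b)^{p}+|a-b|^{p}$ is precisely Hanner's inequality \eqref{eqHanner1-2}, and the max condition becomes
\[
\max\{\Vert u+v\Vert_{p},\Vert u-v\Vert_{p}\}\le \Vert u\Vert_{p}+\Vert v\Vert_{p},
\]
which is nothing but the triangle inequality. The proof is then one line.

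The inequality you spend the last paragraph trying to establish, namely $\Vert u\Vert_{p}+\Vert v\Vert_{p}\le\max\{\Vert u+v\Vert_{p},\Vert u-v\Vert_{p}\}$, is in fact \emph{false}: in $\ell^{p}_{2}$ with $p\in(1,2]$, the vectors $u=(1,0)$ and $v=(0,1)$ give $\Vert u\Vert_{p}+\Vert v\Vert_{p}=2$ while $\Vert u+v\Vert_{p}=\Vert u-v\Vert_{p}=2^{1/p}<2$. So none of the uniform-convexity or Clarkson-type arguments you sketch can succeed; the obstacle is an artifact of the swapped labels, not a genuine step in the proof.
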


The companion of Theorem \ref{thm_Hanner p<=2} for $p\geq2$ is a direct
consequence of Corollary \ref{cor_HLP1}:

\begin{theorem}
\label{thm_Hanner p>=2}$($The generalization of Hanner's inequalities for
$p\in\lbrack2,\infty))$ Let $f:[0,\infty)\rightarrow\mathbb{R}$ be a
nondecreasing function such that $f(0)=0$ and $f(x^{1/p})$ is concave. Then%
\[
f\left(  \Vert u\Vert_{p}+\Vert u\Vert_{p}\right)  +f\left(  \left\vert \Vert
u\Vert_{p}-\Vert u\Vert_{p}\right\vert \right)  \geq f\left(  \Vert
u+v\Vert_{p}\right)  +f\left(  \Vert u-v\Vert_{p}\right)
\]
for all $u,v$ belonging to a Lebesgue space $L^{p}(\mu).$
\end{theorem}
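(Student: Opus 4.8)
The plan is to mirror the proof of Corollary \ref{cor_HLP1} (equivalently, Corollary \ref{cor_Clarkson}), the only difference being that for $p\geq 2$ the inequality (\ref{eqHanner1-2}) is reversed, which forces us to use the \emph{convex} truncated majorization of Corollary \ref{cor_HLP1} rather than the concave one. First I would invoke the reversed Hanner inequality for $L^{p}(\mu)$ with $p\in[2,\infty)$, namely
\[
\|u+v\|_{p}^{p}+\|u-v\|_{p}^{p}\leq\bigl(\|u\|_{p}+\|v\|_{p}\bigr)^{p}+\bigl|\,\|u\|_{p}-\|v\|_{p}\,\bigr|^{p},
\]
together with the elementary bound
\[
\max\bigl\{\|u+v\|_{p}^{p},\|u-v\|_{p}^{p}\bigr\}\leq\bigl(\|u\|_{p}+\|v\|_{p}\bigr)^{p},
\]
which holds because $\|u\pm v\|_{p}\leq\|u\|_{p}+\|v\|_{p}$ by the triangle inequality, while $\bigl(\|u\|_{p}+\|v\|_{p}\bigr)^{p}\geq\bigl|\,\|u\|_{p}-\|v\|_{p}\,\bigr|^{p}$ trivially.

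Next I set $g(x)=f(x^{1/p})$, which is nondecreasing (as a composition of two nondecreasing maps) and concave by hypothesis, and I apply Corollary \ref{cor_HLP1} in its reversed (nonincreasing convex / here nondecreasing concave — one checks which phrasing of the corollary is being used) form. Concretely, with the two ``$y$'' values $y_{1}=\|u+v\|_{p}^{p}$, $y_{2}=\|u-v\|_{p}^{p}$ and the two ``$x$'' values $x_{1}=\bigl(\|u\|_{p}+\|v\|_{p}\bigr)^{p}$, $x_{2}=\bigl|\,\|u\|_{p}-\|v\|_{p}\,\bigr|^{p}$, the two displayed inequalities above say precisely that $\max\{y_{1},y_{2}\}\leq\max\{x_{1},x_{2}\}$ and $y_{1}+y_{2}\leq x_{1}+x_{2}$. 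Corollary \ref{cor_HLP1} (in the direction applicable to a nondecreasing convex function, applied here with the roles of the two strings interchanged so as to accommodate the concave $g$) then yields $g(y_{1})+g(y_{2})\leq g(x_{1})+g(x_{2})$, i.e.
\[
f\bigl(\|u+v\|_{p}\bigr)+f\bigl(\|u-v\|_{p}\bigr)\leq f\bigl(\|u\|_{p}+\|v\|_{p}\bigr)+f\bigl(\bigl|\,\|u\|_{p}-\|v\|_{p}\,\bigr|\bigr),
\]
which is the assertion of the theorem (after correcting the evident typo $\|u\|_{p}+\|u\|_{p}$ to $\|u\|_{p}+\|v\|_{p}$).

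The only genuinely delicate point is bookkeeping: one must make sure that the truncated majorization inequality is invoked in the correct direction. Corollary \ref{cor_HLP1} as stated gives $\sum f(x_{k})\leq f(y_{1})+f(y_{2})$ for $\max\{x_{k}\}\leq\max\{y_{1},y_{2}\}$ and $\sum x_{k}\leq y_{1}+y_{2}$ when $f$ is nondecreasing convex; for a nondecreasing \emph{concave} function one uses the reversed conclusion, whence the names of the two strings must be swapped relative to the previous proofs. Since here $m=n=2$ the ``$(n-m)f(0)$'' terms drop out and $f(0)=0$ is not even needed for this step — it is needed only to make the statement clean. I expect this sign-chasing to be the whole of the work; the analytic content is entirely contained in the classical reversed Hanner inequality and in Corollary \ref{cor_HLP1}, both of which may be cited directly.
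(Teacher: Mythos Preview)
Your overall strategy --- the reversed Hanner inequality plus the triangle-inequality bound $\|u\pm v\|_p^p\leq(\|u\|_p+\|v\|_p)^p$, fed into Corollary~\ref{cor_HLP1} applied to $g(x)=f(x^{1/p})$ --- is exactly the paper's one-line argument. The trouble is not the strategy but the hypothesis you are working under.

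The printed statement contains a second typo besides the $\|u\|_p+\|u\|_p$ you already spotted: the assumption should read ``$f(x^{1/p})$ is \emph{convex}'', not concave. Under the concave hypothesis the theorem is simply false. Take $p=2$, $f(s)=s$ (so $f(x^{1/2})=\sqrt{x}$ is concave, $f$ is nondecreasing, $f(0)=0$), and choose $u,v\in L^2$ orthogonal with $\|u\|_2=\|v\|_2=1$; then the left side is $f(2)+f(0)=2$ while the right side is $f(\sqrt{2})+f(\sqrt{2})=2\sqrt{2}$. That ``convex'' is intended is also clear from the paper itself, which says the result is a direct consequence of Corollary~\ref{cor_HLP1}, a statement about nondecreasing \emph{convex} functions.

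Your attempt to accommodate the concave hypothesis by invoking a ``reversed conclusion'' of Corollary~\ref{cor_HLP1} for nondecreasing concave functions is the actual gap: no such reversal holds. The concave clause of that corollary concerns \emph{nonincreasing} concave functions, and for a nondecreasing concave $g$ the conditions $\max\{y_i\}\leq\max\{x_i\}$ and $\sum y_i\leq\sum x_i$ do \emph{not} force $g(y_1)+g(y_2)\leq g(x_1)+g(x_2)$ --- take $g=\sqrt{\cdot}$, $y_1=y_2=1$, $x_1=2$, $x_2=0$, giving $2>\sqrt{2}$ (and they do not force the opposite inequality either, as the linear $g(t)=t$ shows). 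Once the hypothesis is corrected to $g$ nondecreasing and convex, your two displayed inequalities are precisely the hypotheses of Corollary~\ref{cor_HLP1} with $n=m=2$, the conclusion drops out with no reversal and no string-swapping, and your proof and the paper's coincide.
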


As noticed by Ball, Carlen and Lieb \cite{BCL1994}, Theorem 2, the
inequalities (\ref{eqHanner1-2}) also hold in the context of Schatten spaces
$S^{p}(H)$ in the following two cases:

\begin{enumerate}
\item[(HS1)] $1<p\leq4/3$; and

\item[(HS2)] $u$ and $v$ belong to $S^{p}(H)$ for some $p\in(1,2]$ and $u\pm
v$ are positive semidefinite.
\end{enumerate}

For $p\geq2$, the inequalities (\ref{eqHanner1-2}) work in the reverse
direction and the restriction in (HS1) becomes $p\geq4$, and the restriction
in (HS2) changes to the restriction that $u$ and $v$ are positive
semidefinite. Subject to these restrictions, Theorem \ref{thm_Hanner p<=2} and
Theorem \ref{thm_Hanner p>=2} continue to work in the context of Schatten spaces.

\section{A substitute of the parallelogram law in the context of Banach
spaces}

In connection with the famous work \cite{JN} of Jordan and von Neumann
concerning the inner product spaces, Clarkson \cite{Cl1937} has introduced the
\emph{von Neumann-Jordan constant} $C_{NJ}(X)$ of a Banach space $X$ as
\[
C_{NJ}(X)=\sup\left\{  \frac{\left\Vert u+v\right\Vert ^{2}+\left\Vert
u-v\right\Vert ^{2}}{2\left\Vert u\right\Vert ^{2}+2\left\Vert v\right\Vert
^{2}}:u,v\in X\text{ and }\left\Vert u\right\Vert +\left\Vert v\right\Vert
\neq0\right\}  .
\]

We have $1\leq C_{NJ}(X)\leq2$ for all Banach spaces $X$ and $C_{NJ}(X)=1$
\emph{if and only if} $X$ \emph{is a Hilbert space}.

In general, $C_{NJ}(X)<2$ for any uniformly convex space. $C_{NJ}(X)=2$ in the
case of spaces $L^{p}(\mathbb{R})$ with $p=1$\ or $p=\infty$ and the same is
true in the case of Banach spaces of continuous functions endowed with the sup norm.

Let $1<p<\infty$ and $t=\min\left\{  p,p/(p-1)\right\}  .$ Then
\[
C_{NJ}(X)=2^{2/t-1}%
\]
for each of the following Banach spaces $X$ of dimension at least 2:

\begin{enumerate}
\item[-] $L^{p}(\mathbb{R})$ (see Clarkson \cite{Cl1937});

\item[-] the Sobolev spaces $X=W^{k,p}(\mathbb{R})$\ (see Kato and Miyazaki
\cite{KM96});

\item[-] the\emph{ }Schatten classes of index $p,$ $S_{p}(H)$ (Kato and
Takahashi \cite{KT98}).
\end{enumerate}

The definition of the von Neumann-Jordan constant gives rise to the following
generalization of the parallelogram rule:%
\begin{equation}
\left\Vert u\right\Vert ^{2}+\left\Vert v\right\Vert ^{2}\leq2C_{NJ}%
(X)\left\Vert \frac{u-v}{2}\right\Vert ^{2}+2C_{NJ}(X)\left\Vert \frac{u+v}%
{2}\right\Vert ^{2} \label{genParLaw}%
\end{equation}
for all $u,v\in X.$

The next result extends Theorem 1 to the general context of Banach spaces. Its
statement makes use of a modification of the von Neumann-Jordan constant,
precisely,%
\[
N(X)=\left\{
\begin{array}
[c]{cl}%
2C_{NJ}(X) & \text{if }2C_{NJ}(X)\text{ is an integer}\\
4 & \text{otherwise,}%
\end{array}
\right.  ~
\]
motivated by the use of Corollary \ref{corHLP2} which deals with strings of
elements indexed by integers. Notice that $N(X)=3$ in the case of $L^{p}$
spaces with $p=\left(  2\log2\right)  /\left(  \log3\right)  \approx
1.\,\allowbreak261\,859\,507\ldots$ .

\begin{theorem}
\label{thm_functional-Par-law}Let $X$ be a Banach space and let $f:[0,\infty
)\rightarrow\mathbb{R}$ be a nondecreasing, convex and $3$-concave function
such that $f(0)=0.$ Then%
\begin{equation}
f\left(  \left\Vert u\right\Vert \right)  +f\left(  \left\Vert v\right\Vert
\right)  \leq N(X)f\left(  \left\Vert \frac{u-v}{2}\right\Vert \right)
+N(X)f\left(  \left\Vert \frac{u+v}{2}\right\Vert \right)  ,
\label{functional-Par-Law}%
\end{equation}
and%
\begin{multline*}
f\left(  \left\Vert u\right\Vert \right)  +f\left(  \left\Vert v\right\Vert
\right)  \leq\frac{N(X)}{2}\left\{  f\left(  \left\Vert u+x\right\Vert
\right)  +f\left(  \left\Vert v+x\right\Vert \right)  \right. \\
\left.  +f\left(  \left\Vert x\right\Vert \right)  +f\left(  \left\Vert
u+v+x\right\Vert \right)  \right\}  ,
\end{multline*}
for all \thinspace$u,v,x\in X.$
\end{theorem}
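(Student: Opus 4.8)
The plan is to derive the first inequality \eqref{functional-Par-Law} from the scalar parallelogram-type bound \eqref{genParLaw} together with the truncated majorization machinery, and then obtain the second (quadrilateral) inequality by a translation trick. For the first part, fix $u,v\in X$ and set
\[
a=\left\Vert \tfrac{u+v}{2}\right\Vert,\qquad b=\left\Vert \tfrac{u-v}{2}\right\Vert,\qquad c=\left\Vert u\right\Vert,\qquad d=\left\Vert v\right\Vert.
\]
From \eqref{genParLaw} we have $c^2+d^2\le 2C_{NJ}(X)(a^2+b^2)$, and from the triangle inequality $\max\{c,d\}\le a+b$, hence $\max\{c^2,d^2\}\le (a+b)^2\le 2(a^2+b^2)\le 2C_{NJ}(X)(a^2+b^2)$. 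I would now invoke Theorem \ref{thm-comp}: since $f$ is nondecreasing, continuous and $3$-concave, the function $g(t)=f(t^{1/2})$ is nondecreasing and concave on $[0,\infty)$ with $g(0)=0$. Applying Corollary \ref{corHLP2} to $g$ with the two $y$-values $c^2,d^2$ and the string of $x$-values consisting of $\lceil 2C_{NJ}(X)\rceil$-many copies split between $a^2$ and $b^2$ appropriately — concretely, take $N(X)$ copies: half equal to $a^2$ and half equal to $b^2$ if $N(X)$ is even, and if $N(X)=3$ (the $L^p$ case with $p=(2\log2)/\log3$) use two copies of the larger of $a^2,b^2$ and one of the smaller, or simply use the bound with $4$ copies when $2C_{NJ}(X)$ is not an integer — one obtains
\[
g(c^2)+g(d^2)\ge \text{(sum over the }x\text{-string)}\,+\,(N(X)-2)g(0),
\]
and since $g(0)=0$ and the $x$-string sums to $N(X)\cdot\tfrac{a^2+b^2}{2}\ge \tfrac12(c^2+d^2)$ while $\max$ of the string $\le\max\{c^2,d^2\}$, the hypotheses of Corollary \ref{corHLP2} are met; rewriting $g(t^2)=f(t)$ gives exactly \eqref{functional-Par-Law}. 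The point where care is needed is verifying the two majorization conditions of Corollary \ref{corHLP2} for the specific integer $N(X)$, i.e. that $N(X)\ge 2C_{NJ}(X)$ (true by definition of $N(X)$ and since $2C_{NJ}(X)\le 4$) and that the maximum condition holds; both reduce to the elementary estimates above.

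For the second inequality I would apply the first one after a translation. Given $u,v,x\in X$, replace $u$ by $u+x$ and $v$ by $v+x$ in \eqref{functional-Par-Law}. Then
\[
\frac{(u+x)-(v+x)}{2}=\frac{u-v}{2},\qquad \frac{(u+x)+(v+x)}{2}=\frac{u+v}{2}+x=\frac{(u+v+x)+x}{2}\cdot\frac{?}{?}
\]
— more precisely, the point is that $\tfrac12\big((u+v+x)+x\big)=\tfrac{u+v}{2}+x$ is exactly $\tfrac{(u+x)+(v+x)}{2}$, so applying \eqref{functional-Par-Law} with the pair $(u+x,\,v+x)$ yields
\[
f(\Vert u+x\Vert)+f(\Vert v+x\Vert)\le N(X)f\!\left(\left\Vert\tfrac{u-v}{2}\right\Vert\right)+N(X)f\!\left(\left\Vert\tfrac{u+v}{2}+x\right\Vert\right).
\]
That is the wrong shape; instead I apply \eqref{functional-Par-Law} to the pair $(u+v+x,\,x)$, whose half-sum is $\tfrac{u+v}{2}+x$ and half-difference is $\tfrac{u+v}{2}$, giving
\[
f(\Vert u+v+x\Vert)+f(\Vert x\Vert)\le N(X)f\!\left(\left\Vert\tfrac{u+v}{2}\right\Vert\right)+N(X)f\!\left(\left\Vert\tfrac{u+v}{2}+x\right\Vert\right),
\]
and simultaneously to the pair $(u+x,\,v+x)$ as above. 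Adding the two resulting inequalities, the two terms $N(X)f(\Vert\tfrac{u+v}{2}+x\Vert)$ combine, and then I use \eqref{functional-Par-Law} once more in the form $f(\Vert u\Vert)+f(\Vert v\Vert)\le N(X)f(\Vert\tfrac{u-v}{2}\Vert)+N(X)f(\Vert\tfrac{u+v}{2}\Vert)$ to eliminate the $f(\Vert\tfrac{u-v}{2}\Vert)$ and $f(\Vert\tfrac{u+v}{2}\Vert)$ terms in favour of $f(\Vert u\Vert)+f(\Vert v\Vert)$; rearranging and dividing by $2$ delivers the stated quadrilateral inequality with the constant $N(X)/2$.

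The main obstacle is bookkeeping in this second step: one must choose the two (or three) applications of \eqref{functional-Par-Law} so that after summing, every auxiliary term on the right cancels or is absorbed, leaving precisely $f(\Vert u\Vert)+f(\Vert v\Vert)$ on the left and the four terms $f(\Vert u+x\Vert),f(\Vert v+x\Vert),f(\Vert x\Vert),f(\Vert u+v+x\Vert)$ with coefficient $N(X)/2$ on the right. Because $f$ need only be $3$-concave (not subadditive), no further inequalities are available, so the combination of applications of \eqref{functional-Par-Law} must be exact — this is the analogue of the two-triangle-inequality trick used for \eqref{Sch_ineq} in the introduction, but now with the weight $N(X)$ tracked throughout. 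A secondary technical point, already handled by Corollary \ref{corH_P_S} and Popoviciu's approximation theorem (Theorem \ref{thm_Pop35}), is that we may assume $f$ smooth when invoking Theorem \ref{thm-comp}; the permanence properties in Remark \ref{rem_permanence prop} then transfer the conclusion to all nondecreasing convex $3$-concave $f$ with $f(0)=0$.
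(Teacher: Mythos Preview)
Your plan for the first inequality is on the right track (apply Corollary \ref{corHLP2} to $g(t)=f(\sqrt t)$), but the count of copies is off by a factor of two. With only $N(X)$ copies in total (half $a^{2}$, half $b^{2}$) the sum of the $x$-string is $\tfrac{N(X)}{2}(a^{2}+b^{2})$, and the hypothesis $\sum x_{i}\ge y_{1}+y_{2}=c^{2}+d^{2}$ would demand $N(X)\ge 4C_{NJ}(X)$, which already fails in the Hilbert case ($N(X)=2$, $C_{NJ}(X)=1$). Worse, with that count Corollary \ref{corHLP2} would output $\tfrac{N(X)}{2}\bigl(f(a)+f(b)\bigr)\ge f(c)+f(d)$, a statement twice as strong as \eqref{functional-Par-Law} and in general false. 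The paper takes $N(X)$ copies of \emph{each} of $\Vert(u-v)/2\Vert^{2}$ and $\Vert(u+v)/2\Vert^{2}$, i.e.\ $2N(X)$ copies altogether; then $\sum x_{i}=N(X)(a^{2}+b^{2})\ge 2C_{NJ}(X)(a^{2}+b^{2})\ge c^{2}+d^{2}$ by \eqref{genParLaw}, and Corollary \ref{corHLP2} yields exactly \eqref{functional-Par-Law}. (Also, the relevant max condition is $\max\{a^{2},b^{2}\}\le\max\{c^{2},d^{2}\}$, which follows from $\Vert(u\pm v)/2\Vert\le(\Vert u\Vert+\Vert v\Vert)/2\le\max\{\Vert u\Vert,\Vert v\Vert\}$; your chain $\max\{c,d\}\le a+b$ is true but does not give this.)

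The approach to the second inequality does not work: the inequalities point the wrong way. Applying \eqref{functional-Par-Law} to the pairs $(u+x,v+x)$ and $(u+v+x,x)$ produces \emph{upper} bounds for the sum $f(\Vert u+x\Vert)+f(\Vert v+x\Vert)+f(\Vert x\Vert)+f(\Vert u+v+x\Vert)$, whereas you need this sum to serve as an upper bound for $f(\Vert u\Vert)+f(\Vert v\Vert)$. A further application of \eqref{functional-Par-Law} to $(u,v)$ again bounds $f(\Vert u\Vert)+f(\Vert v\Vert)$ from above by $N(X)\bigl[f(\Vert(u-v)/2\Vert)+f(\Vert(u+v)/2\Vert)\bigr]$; the two chains run in the same direction and cannot be combined to reach the claim. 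The missing ingredient is simply that $f\circ\Vert\cdot\Vert$ is convex (because $f$ is convex and nondecreasing and $\Vert\cdot\Vert$ is convex), so writing $(u-v)/2=\tfrac12\bigl((u+x)+(-(v+x))\bigr)$ and $(u+v)/2=\tfrac12\bigl((u+v+x)+(-x)\bigr)$ gives
\[
2f\bigl(\Vert\tfrac{u-v}{2}\Vert\bigr)\le f(\Vert u+x\Vert)+f(\Vert v+x\Vert),\qquad
2f\bigl(\Vert\tfrac{u+v}{2}\Vert\bigr)\le f(\Vert x\Vert)+f(\Vert u+v+x\Vert).
\]
Multiplying each by $N(X)/2$ and inserting into \eqref{functional-Par-Law} immediately yields the second inequality with the stated constant $N(X)/2$; no further applications of \eqref{functional-Par-Law} are needed.
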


\begin{proof}
We will apply Corollary \ref{corHLP2} to the function $f\circ\sqrt{\cdot}$ and
the points%
\begin{gather*}
x_{1}=\cdots=x_{N(X)}=\left\Vert \frac{u-v}{2}\right\Vert ^{2},\text{ }\\
x_{N(X)+1}=\cdots=x_{2N(X)}=\left\Vert \frac{u+v}{2}\right\Vert ^{2},\\
y_{1}=\left\Vert u\right\Vert ^{2}\text{ and }y_{2}=\left\Vert v\right\Vert
^{2}.
\end{gather*}
We have $\max\{x_{1},x_{2},\ldots,x_{2N(X)}\}\leq\max\{y_{1},y_{2}\}$ since
\begin{align*}
x_{1}  &  =\cdots=x_{N(X)}=\left\Vert \frac{u-v}{2}\right\Vert ^{2}\leq\left(
\frac{\left\Vert u\right\Vert +\left\Vert v\right\Vert }{2}\right)  ^{2}\\
&  \leq\max\left\{  \left\Vert u\right\Vert ^{2},\left\Vert v\right\Vert
^{2}\right\}  =\max\{y_{1},y_{2}\}
\end{align*}
and the same is true for $x_{N(X)+1},\ldots,x_{2N(X)}.~$The fact that
$x_{1}+x_{2}+x_{3}+\ldots+x_{2N(X)}\geq y_{1}+y_{2}$ follows from the
inequality (\ref{genParLaw}), while Theorem \ref{thm-comp} assures that
$f\circ\sqrt{\cdot}$ is a nondecreasing concave function. The inequality
(\ref{functional-Par-Law}) is now clear.

The second inequality in the statement of Theorem \ref{thm_functional-Par-law}
is a consequence of the inequality (\ref{functional-Par-Law}). Indeed, since
$f$ is convex and nondecreasing, the function $f\circ\left\Vert \cdot
\right\Vert $ is also convex, which yields%
\begin{align*}
2f\left(  \left\Vert \frac{u-v}{2}\right\Vert \right)   &  \leq f\left(
\left\Vert u+x\right\Vert \right)  +f\left(  \left\Vert -v-x\right\Vert
\right) \\
&  =f\left(  \left\Vert u+x\right\Vert \right)  +f\left(  \left\Vert
v+x\right\Vert \right)  ,
\end{align*}
and
\[
2f\left(  \left\Vert \frac{u+v}{2}\right\Vert \right)  \leq f\left(
\left\Vert x\right\Vert \right)  +f\left(  \left\Vert u+v+x\right\Vert
\right)  ,
\]
for all $u,v,x\in\mathbb{R}^{N}.$
\end{proof}

In the particular case when $f$ is the function $x^{\alpha}$ with $\alpha
\in\lbrack1,2],$ Theorem \ref{thm_functional-Par-law} yields the following result:

\begin{corollary}
\label{cor_alfa power}Let $X$ be a Banach and $\alpha\in\lbrack1,2].$ Then
\[
\left\Vert u\right\Vert ^{\alpha}+\left\Vert v\right\Vert ^{\alpha}\leq
N(X)\left(  \left\Vert \frac{u-v}{2}\right\Vert ^{\alpha}+\left\Vert
\frac{u+v}{2}\right\Vert ^{\alpha}\right)  \leq N(X)^{2}\left(  \left\Vert
u\right\Vert ^{\alpha}+\left\Vert v\right\Vert ^{\alpha}\right)
\]
whenever $u,v\in X.$
\end{corollary}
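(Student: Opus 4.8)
The plan is to specialize Theorem \ref{thm_functional-Par-law} to the function $f(x)=x^{\alpha}$ and then to observe that a single clean inequality can be applied twice. First I would check that $f(x)=x^{\alpha}$ with $\alpha\in[1,2]$ genuinely satisfies the hypotheses of Theorem \ref{thm_functional-Par-law}: it maps $[0,\infty)$ into $[0,\infty)$, it vanishes at the origin, it is nondecreasing and convex since $\alpha\geq 1$, and it is $3$-concave since its third derivative $\alpha(\alpha-1)(\alpha-2)x^{\alpha-3}$ is nonpositive on $(0,\infty)$ for $\alpha\in[1,2]$ (this is exactly the example $x^\alpha$, $\alpha\in[1,2]$, already listed in $\mathcal{S}_0$ and, by Corollary \ref{corH_P_S}, among the nondecreasing convex $3$-concave functions). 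Hence the inequality \eqref{functional-Par-Law} applies verbatim and gives
\[
\left\Vert u\right\Vert ^{\alpha}+\left\Vert v\right\Vert ^{\alpha}\leq
N(X)\left(  \left\Vert \tfrac{u-v}{2}\right\Vert ^{\alpha}+\left\Vert \tfrac{u+v}{2}\right\Vert ^{\alpha}\right),
\]
which is the first inequality of the corollary.

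For the second inequality I would simply reapply the first one with $u$ and $v$ replaced by the pair $\tfrac{u-v}{2}$ and $\tfrac{u+v}{2}$. The key algebraic point is that this substitution is self-reproducing: the half-difference of these two vectors is $\tfrac{1}{2}\big(\tfrac{u-v}{2}-\tfrac{u+v}{2}\big)=-\tfrac{v}{2}$, and the half-sum is $\tfrac{1}{2}\big(\tfrac{u-v}{2}+\tfrac{u+v}{2}\big)=\tfrac{u}{2}$. Therefore the first inequality applied to this pair reads
\[
\left\Vert \tfrac{u-v}{2}\right\Vert ^{\alpha}+\left\Vert \tfrac{u+v}{2}\right\Vert ^{\alpha}
\leq N(X)\left(\left\Vert \tfrac{v}{2}\right\Vert^{\alpha}+\left\Vert \tfrac{u}{2}\right\Vert^{\alpha}\right).
\]
It remains to absorb the factor $(1/2)^{\alpha}\le 1$ (valid since $\alpha\ge 1$), which yields $\left\Vert \tfrac{u}{2}\right\Vert^{\alpha}+\left\Vert \tfrac{v}{2}\right\Vert^{\alpha}\le \left\Vert u\right\Vert^{\alpha}+\left\Vert v\right\Vert^{\alpha}$; multiplying through by $N(X)$ and chaining with the previous display gives
\[
N(X)\left(\left\Vert \tfrac{u-v}{2}\right\Vert ^{\alpha}+\left\Vert \tfrac{u+v}{2}\right\Vert ^{\alpha}\right)\leq N(X)^{2}\left(\left\Vert u\right\Vert ^{\alpha}+\left\Vert v\right\Vert ^{\alpha}\right),
\]
completing the chain of inequalities in the statement.

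There is essentially no obstacle here; the only thing to be careful about is the direction of the scaling factor $(1/2)^\alpha$ — one needs $\alpha\ge 1$ so that $(1/2)^\alpha\le 1/2\le 1$, which is exactly the stated range, so the homogeneity of the norm works in our favour rather than against it. (Incidentally, the same two-step argument proves the more general statement that $f(\Vert u\Vert)+f(\Vert v\Vert)\le N(X)^2\big(f(\Vert u\Vert/2)+f(\Vert v\Vert/2)\big)\le N(X)^2\big(f(\Vert u\Vert)+f(\Vert v\Vert)\big)$ whenever $f$ is in addition superadditive-under-halving, but for the power functions the plain homogeneity suffices and no such extra hypothesis is needed.)
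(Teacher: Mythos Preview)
Your proof is correct and is essentially what the paper intends: the paper merely states that the corollary is the particular case $f(x)=x^{\alpha}$ of Theorem~\ref{thm_functional-Par-law}, leaving the reader to supply the second inequality, and your substitution $(u,v)\mapsto\bigl(\tfrac{u-v}{2},\tfrac{u+v}{2}\bigr)$ followed by $\bigl\|\tfrac{u}{2}\bigr\|^{\alpha}+\bigl\|\tfrac{v}{2}\bigr\|^{\alpha}=2^{-\alpha}(\|u\|^{\alpha}+\|v\|^{\alpha})\leq \|u\|^{\alpha}+\|v\|^{\alpha}$ is the natural way to do that. One small remark: the bound $(1/2)^{\alpha}\leq 1$ actually holds for every $\alpha\geq 0$, so the restriction $\alpha\geq 1$ is not needed at this step---it enters only through the convexity hypothesis on $f$.
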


\section{The generalization of the quadruple inequality of Schötz}

We are now in a position to state the following generalization to the context
of Banach spaces of the quadruple inequality of Schötz (see Theorem 1 in the
Introduction) :

\begin{theorem}
\label{thm_Schotz_gen}Let $y,z,q,r$ be four points in the Banach space $X$ and
let $f$ be a nondecreasing, convex and $3$-concave function defined on
$[0,\infty)$ such that $f(0)=0.$ Then%
\begin{multline*}
f\left(  \left\Vert y-q\right\Vert \right)  +f\left(  \left\Vert
z-r\right\Vert \right) \\
\leq\frac{N(X)}{2}\left\{  f\left(  \left\Vert y-z\right\Vert \right)
+f\left(  \left\Vert r-q\right\Vert \right)  \right.  \left.  +f\left(
\left\Vert z-q\right\Vert \right)  +f\left(  \left\Vert y-r\right\Vert
\right)  \right\}  .
\end{multline*}

\end{theorem}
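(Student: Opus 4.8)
The plan is to deduce Theorem \ref{thm_Schotz_gen} from the second inequality of Theorem \ref{thm_functional-Par-law} by a direct substitution that converts the four-point affine configuration $(y,z,q,r)$ into the three-vector configuration $(u,v,x)$ appearing there. Concretely, I would set $u=y-z$, $v=r-q$ and $x=z-q$. With this choice one computes $u+x=y-q$, $v+x=r-z=-(z-r)$ and $u+v+x=y-z+r-q+z-q$; the last expression must be made to match one of the two remaining norms in Theorem \ref{thm_functional-Par-law}, so in fact a slightly different assignment is needed and finding the right one is the only genuinely delicate point.

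Let me describe the bookkeeping more carefully. The target right-hand side involves $f$ evaluated at $\|y-z\|$, $\|r-q\|$, $\|z-q\|$ and $\|y-r\|$, while the left-hand side involves $\|y-q\|$ and $\|z-r\|$. In the second inequality of Theorem \ref{thm_functional-Par-law} the right-hand side is $\tfrac{N(X)}{2}\{f(\|u+x\|)+f(\|v+x\|)+f(\|x\|)+f(\|u+v+x\|)\}$ and the left-hand side is $f(\|u\|)+f(\|v\|)$. So I would match the four right-hand terms: take $\|x\|=\|z-q\|$, i.e. $x=z-q$; take $\|u+x\|=\|y-z\|$, which with $x=z-q$ forces $u=y-z-(z-q)=y-2z+q$ — that already fails to be what we want on the left. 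Instead the cleaner route is to note that the left side of Theorem \ref{thm_functional-Par-law}'s second inequality contains $\|u\|,\|v\|$ and we want those to become $\|y-q\|,\|z-r\|$; so set $u=y-q$, $v=z-r$ (equivalently $v=r-z$ by symmetry of the norm). Then $u+v=y-q+z-r$, and choosing $x$ appropriately we need $u+x$, $v+x$, $x$, $u+v+x$ to reproduce, up to sign, the four quantities $y-z$, $r-q$, $z-q$, $y-r$. Taking $x=z-q$ gives $u+x=y-q+z-q$, which is not $y-z$; taking $x=-(z-q)=q-z$ gives $u+x=y-z$. Then $v+x=z-r+q-z=q-r$, so $\|v+x\|=\|r-q\|$; $\|x\|=\|z-q\|$; and $u+v+x=y-q+z-r+q-z=y-r$, so $\|u+v+x\|=\|y-r\|$. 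All four match exactly, and the left-hand side is $f(\|y-q\|)+f(\|z-r\|)$ as desired.

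Thus the proof is: apply the second inequality of Theorem \ref{thm_functional-Par-law} with $u=y-q$, $v=z-r$ and $x=q-z$, and read off the claimed inequality after using $\|v+x\|=\|q-r\|=\|r-q\|$ and the evident sign-invariance of the norm in the other terms. The main obstacle — really the only nontrivial point — is choosing the substitution correctly so that all four terms on the right and both terms on the left fall into place simultaneously; once $u=y-q$, $v=z-r$, $x=q-z$ is identified, the verification is a one-line algebraic check and nothing further is needed. I would also remark, as the paper does after the statement, that when $X$ is an inner product space one has $C_{NJ}(X)=1$, hence $N(X)=2$ and the factor $N(X)/2$ collapses to $1$, recovering Theorem \ref{thm1}.
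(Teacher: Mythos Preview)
Your proof is correct and takes essentially the same approach as the paper: both reduce the four-point inequality to the second inequality of Theorem \ref{thm_functional-Par-law} via a linear substitution. The paper chooses $x=z-q$, $u=q-y$, $v=r-z$, which is precisely the negative of your choice $x=q-z$, $u=y-q$, $v=z-r$; since the norm is even this difference is immaterial, and the verification is identical.
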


In the case of inner product spaces, $N(X)=2$ and we retrieve the quadruple
inequality of Schötz.

\begin{proof}
Given four points $y,z,q,r$ in the space $X,$ let us denote%
\[
z-q=x,\text{ }q-y=u,\ \text{and}\ r-z=v.
\]
Then%
\[
z-y=u+x,\text{ }r-y=u+v+x,\ \text{and}\ r-q=v+x,
\]
so the proof ends by taking into account the second part of Theorem
\ref{thm_functional-Par-law}.\quad$\square$
\end{proof}

In the case of $L^{p}$ spaces, Clarkson (see \cite{Cl1936}, Theorem 2) noticed
the following two inequalities, usually known as \emph{the easy Clarkson
inequalities}:
\begin{gather*}
2^{p-1}\left(  \left\Vert x\right\Vert _{p}^{p}+\left\Vert y\right\Vert
_{p}^{p}\right)  \leq\left\Vert x-y\right\Vert _{p}^{p}+\left\Vert
x+y\right\Vert _{p}^{p}\leq2\left(  \left\Vert x\right\Vert _{p}%
^{p}+\left\Vert y\right\Vert _{p}^{p}\right)  ,\quad\text{if }p\in(1,2],\\
2\left(  \left\Vert x\right\Vert _{p}^{p}+\left\Vert y\right\Vert _{p}%
^{p}\right)  \leq\left\Vert x-y\right\Vert _{p}^{p}+\left\Vert x+y\right\Vert
_{p}^{p}\leq2^{p-1}\left(  \left\Vert x\right\Vert _{p}^{p}+\left\Vert
y\right\Vert _{p}^{p}\right)  ,\quad\text{if }p\in\lbrack2,\infty).
\end{gather*}
As a consequence, by replacing the inequalities (\ref{genParLaw}) with the
easy Clarkson inequalities and using a similar argument to the one that we
used for Theorem \ref{thm_functional-Par-law} and Theorem \ref{thm_Schotz_gen}%
, we arrive at the following companion of these theorems.

\begin{theorem}
\label{thmLpquadruple}Consider an $L^{p}$ space $X$ with $p\in(1,2]$ and let
$f:[0,\infty)\rightarrow\mathbb{R}$ be a nondecreasing, convex function such
that $f(0)=0$ and $f(x^{1/p})$ is concave. Then%
\[
f\left(  \left\Vert u\right\Vert _{p}\right)  +f\left(  \left\Vert
v\right\Vert _{p}\right)  \leq2f\left(  \left\Vert \frac{u-v}{2}\right\Vert
_{p}\right)  +2f\left(  \left\Vert \frac{u+v}{2}\right\Vert _{p}\right)  ,
\]
and%
\[
f\left(  \left\Vert u\right\Vert _{p}\right)  +f\left(  \left\Vert
v\right\Vert _{p}\right)  \leq f\left(  \left\Vert u+x\right\Vert _{p}\right)
+f\left(  \left\Vert v+x\right\Vert _{p}\right)  +f\left(  \left\Vert
x\right\Vert _{p}\right)  +f\left(  \left\Vert u+v+x\right\Vert _{p}\right)
,
\]
for all \thinspace$u,v,x\in X.$ As a consequence,%
\begin{multline*}
f\left(  \left\Vert y-q\right\Vert _{p}\right)  +f\left(  \left\Vert
z-r\right\Vert _{p}\right) \\
\leq f\left(  \left\Vert y-z\right\Vert _{p}\right)  +f\left(  \left\Vert
r-q\right\Vert _{p}\right)  +f\left(  \left\Vert z-q\right\Vert _{p}\right)
+f\left(  \left\Vert y-r\right\Vert _{p}\right)
\end{multline*}
for all \thinspace$q,r,y,z\in X.$

For $p\in\lbrack2,\infty)$, these inequalities should be replaced by the
following ones:%
\begin{multline*}
f\left(  \left\Vert u\right\Vert _{p}\right)  +f\left(  \left\Vert
v\right\Vert _{p}\right)  \leq C(p)f\left(  \left\Vert \frac{u-v}%
{2}\right\Vert _{p}\right)  +C(p)f\left(  \left\Vert \frac{u+v}{2}\right\Vert
_{p}\right)  ,\\
f\left(  \left\Vert u\right\Vert _{p}\right)  +f\left(  \left\Vert
v\right\Vert _{p}\right)  \leq\frac{C(p)}{2}\left\{  f\left(  \left\Vert
u+x\right\Vert _{p}\right)  +f\left(  \left\Vert v+x\right\Vert _{p}\right)
\right. \\
\left.  +f\left(  \left\Vert x\right\Vert _{p}\right)  +f\left(  \left\Vert
u+v+x\right\Vert _{p}\right)  \right\}  ,
\end{multline*}
and%
\begin{multline*}
f\left(  \left\Vert y-q\right\Vert _{p}\right)  +f\left(  \left\Vert
z-r\right\Vert _{p}\right)  \leq\frac{C(p)}{2}\left\{  f\left(  \left\Vert
y-z\right\Vert _{p}\right)  +f\left(  \left\Vert r-q\right\Vert _{p}\right)
\right. \\
\left.  +f\left(  \left\Vert z-q\right\Vert _{p}\right)  +f\left(  \left\Vert
y-r\right\Vert _{p}\right)  \right\}  ,
\end{multline*}
where $C(p)=2^{p-1}$ if $2^{p-1}$ is an integer and $C(p)=\lfloor
2^{p-1}+1\rfloor$ otherwise.
\end{theorem}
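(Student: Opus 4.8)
The plan is to mimic exactly the three-step pattern already used for Theorem \ref{thm_functional-Par-law} and Theorem \ref{thm_Schotz_gen}, but with the generalized parallelogram inequality (\ref{genParLaw}) replaced by the easy Clarkson inequalities. First I would handle the case $p\in(1,2]$. The crucial structural facts I need are: (i) $\left\Vert\frac{u\pm v}{2}\right\Vert_p^2\le\bigl(\frac{\Vert u\Vert_p+\Vert v\Vert_p}{2}\bigr)^2\le\max\{\Vert u\Vert_p^2,\Vert v\Vert_p^2\}$, which is just the triangle inequality and convexity of $t\mapsto t^2$; and (ii) the lower easy Clarkson inequality $2^{p-1}(\Vert u\Vert_p^p+\Vert v\Vert_p^p)\le\Vert u-v\Vert_p^p+\Vert u+v\Vert_p^p$, rewritten after dividing by $2^p$ as
\[
\Vert u\Vert_p^p+\Vert v\Vert_p^p\le 2\left\Vert\frac{u-v}{2}\right\Vert_p^p+2\left\Vert\frac{u+v}{2}\right\Vert_p^p,
\]
i.e. the constant is exactly $2$, an integer, which is why no floor function is needed here. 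Now I apply Corollary \ref{corHLP2} to the function $g=f\circ\sqrt{\cdot}$, which is nondecreasing and concave by Theorem \ref{thm-comp} (the hypothesis that $f(x^{1/p})$ is concave for some $p\le2$, together with Remark \ref{rem_cv of f (sqrt x)}, gives concavity of $f(x^{1/2})$ since $x^{1/2}=(x^{1/p})^{p/2}$ with $p/2\le1$). For the four-element string I take $x_1=x_2=\left\Vert\frac{u-v}{2}\right\Vert_p^2$, $x_3=x_4=\left\Vert\frac{u+v}{2}\right\Vert_p^2$ and $y_1=\Vert u\Vert_p^2$, $y_2=\Vert v\Vert_p^2$. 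Wait — but the easy Clarkson inequality is in the $p$-th powers, not the squares, so I cannot directly use $g=f\circ\sqrt{\cdot}$; instead I should work with $g=f(x^{1/p})$ and the string $x_1=\cdots=x_?=\left\Vert\frac{u-v}{2}\right\Vert_p^p$, etc. So: apply Corollary \ref{corHLP2} to $g(t)=f(t^{1/p})$ (nondecreasing concave by hypothesis) with $x_1=x_2=\left\Vert\frac{u-v}{2}\right\Vert_p^p$, $x_3=x_4=\left\Vert\frac{u+v}{2}\right\Vert_p^p$, $y_1=\Vert u\Vert_p^p$, $y_2=\Vert v\Vert_p^p$; the $\max$ condition follows from (i) raised to the $p/2$ power (since $t\mapsto t^{p/2}$ is monotone, as $p>0$), and the sum condition $\sum x_k\ge y_1+y_2$ is precisely (ii). This yields the first displayed inequality with constant $2$.

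The second displayed inequality follows from the first purely formally, exactly as in the proof of Theorem \ref{thm_functional-Par-law}: since $f\circ\Vert\cdot\Vert_p$ is convex (composition of the convex nondecreasing $f$ with the norm), one has $2f\bigl(\left\Vert\frac{u-v}{2}\right\Vert_p\bigr)\le f(\Vert u+x\Vert_p)+f(\Vert v+x\Vert_p)$ from $\frac{u-v}{2}=\frac{(u+x)+(-(v+x))}{2}$, and $2f\bigl(\left\Vert\frac{u+v}{2}\right\Vert_p\bigr)\le f(\Vert x\Vert_p)+f(\Vert u+v+x\Vert_p)$ from $\frac{u+v}{2}=\frac{x+(u+v+x)}{2}$; adding and using the first inequality (with constant $2$) gives the claim with the factor $\tfrac{C(p)}{2}=1$ in front of the brace. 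The quadruple inequality is then obtained by the substitution $x=z-q$, $u=q-y$, $v=r-z$, which gives $u+x=z-y$, $u+v+x=r-y$, $v+x=r-q$, and directly reads off the third display from the second.

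For $p\in[2,\infty)$ the argument is the same except that the relevant bound is now the lower easy Clarkson inequality in that range, $2(\Vert u\Vert_p^p+\Vert v\Vert_p^p)\le\Vert u-v\Vert_p^p+\Vert u+v\Vert_p^p$... — no, I must be careful here: for $p\ge2$ the inequality I need for Corollary \ref{corHLP2} (sum of $x_k$ dominates sum of $y_j$) is the one with $\Vert u-v\Vert_p^p+\Vert u+v\Vert_p^p\ge 2^{p-1}(\Vert u\Vert_p^p+\Vert v\Vert_p^p)$, i.e. after dividing by $2^p$: $\left\Vert\frac{u-v}{2}\right\Vert_p^p+\left\Vert\frac{u+v}{2}\right\Vert_p^p\ge 2^{p-1-p}\cdot 2^{p-1}\cdot\frac{1}{?}$ — the clean statement is that $\Vert u\Vert_p^p+\Vert v\Vert_p^p$ is dominated by $C(p)$ copies (spread over the $x$'s) of the averaged quantities, where the natural constant is $2^{p-1}$; when $2^{p-1}$ is not an integer one rounds up to $\lfloor 2^{p-1}+1\rfloor$ and still has a valid domination since enlarging the number of nonnegative $x$-terms only increases $\sum x_k$. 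I then apply Corollary \ref{corHLP2} with $x_1=\cdots=x_{C(p)}=\left\Vert\frac{u-v}{2}\right\Vert_p^p$, $x_{C(p)+1}=\cdots=x_{2C(p)}=\left\Vert\frac{u+v}{2}\right\Vert_p^p$, $y_1=\Vert u\Vert_p^p$, $y_2=\Vert v\Vert_p^p$, and the $\max$ condition goes through as before. The passage to the second and third displays is verbatim the convexity-of-$f\circ\Vert\cdot\Vert_p$ argument plus the same substitution.

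The main obstacle is bookkeeping around the rounding: I must verify that replacing the exact constant ($2$ when $p\le2$, $2^{p-1}$ when $p\ge2$) by the integer $C(p)$ is legitimate inside Corollary \ref{corHLP2}, namely that increasing the multiplicity of the (nonnegative) $x$-terms preserves both hypotheses of that corollary — the $\max$ condition trivially, and the sum condition because we are only making $\sum x_k$ larger while $\sum y_j$ is fixed. One also has to check the hypotheses of Theorem \ref{thm-comp}/Remark \ref{rem_cv of f (sqrt x)} are what license "$f(x^{1/p})$ concave" as the input to Corollary \ref{corHLP2}; here it is given directly in the statement, so that step is immediate, and no appeal to Theorem \ref{thm-comp} is actually needed (unlike in Theorem \ref{thm_functional-Par-law} where concavity of $f\circ\sqrt{\cdot}$ had to be derived).
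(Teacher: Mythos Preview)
Your proposal is correct and follows essentially the same approach as the paper, which explicitly directs the reader to replace the generalized parallelogram inequality (\ref{genParLaw}) by the easy Clarkson inequalities and otherwise repeat the argument of Theorems \ref{thm_functional-Par-law} and \ref{thm_Schotz_gen}. After your self-corrections you land on exactly the intended line: apply Corollary \ref{corHLP2} to $g(t)=f(t^{1/p})$ with the $p$-th powers of the norms as the $x_k$ and $y_j$, derive the second inequality from convexity of $f\circ\Vert\cdot\Vert_p$, and obtain the quadruple inequality by the substitution $x=z-q$, $u=q-y$, $v=r-z$; the rounding to $C(p)$ for $p\ge2$ is handled for the right reason (enlarging the multiplicity of nonnegative $x$-terms preserves both hypotheses of Corollary \ref{corHLP2}).
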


As McCarthy noticed in \cite{McC1967} (see also Simon \cite{Simon}), the two
easy Clarkson inequalities also work in the context of Schatten classes
$S^{p}(H)$ (provided that the $L^{p}$ norms $\left\Vert \cdot\right\Vert _{p}$
are replaced by the Schatten $p$ norms $\left\Vert \cdot\right\Vert _{p}$).
Accordingly, Theorem \ref{thmLpquadruple} still works in the framework of
Schatten classes.

We end this section by noticing a quadruple inequality (similar to that stated
in Theorem \ref{thmLpquadruple}) that results in the case $p\in\lbrack
2,\infty)$ of the optimal 2-uniform convexity inequality. For convenience, we
denote%
\[
\tilde{C}\left(  p\right)  =\left\{
\begin{array}
[c]{ll}%
\left(  p-1\right)  /2 & \text{if }\left(  p-1\right)  /2\in\mathbb{Z}\\
\lfloor\left(  p-1\right)  /2+1\rfloor & \text{otherwise.}%
\end{array}
\right.
\]

\begin{theorem}
\label{thm-extension of 2-unif cv p>=2}$($The $2$-uniform convexity functional
inequality: case $p\in\lbrack2,\infty))$ If $p\in\lbrack2,\infty)$ and
$f:[0,\infty)\rightarrow\mathbb{R}$ is a nondecreasing, convex, and
$3$-concave function such that $f(0)=0$. Then%
\[
f\left(  \left\Vert x\right\Vert _{p}\right)  +f\left(  \left\Vert
y\right\Vert _{p}\right)  \leq2f\left(  \left\Vert \frac{x+y}{2}\right\Vert
_{p}\right)  +2\tilde{C}\left(  p\right)  f\left(  \left\Vert \frac{x-y}%
{2}\right\Vert _{p}\right)
\]
and%
\begin{align*}
f\left(  \left\Vert x\right\Vert _{p}\right)  +f\left(  \left\Vert
y\right\Vert _{p}\right)   &  \leq f\left(  \left\Vert v\right\Vert \right)
+f\left(  \left\Vert x+y+v\right\Vert _{p}\right) \\
&  +\tilde{C}\left(  p\right)  f\left(  \left\Vert x+u\right\Vert _{p}\right)
+\tilde{C}\left(  p\right)  f\left(  \left\Vert y+u\right\Vert _{p}\right)
\end{align*}
for all elements $x,y,$ and $u$ belonging to an $L^{p}$ space, to a Schatten
space $S^{p}(H)$ or to a non-commutative $L^{p}$ space.

As a consequence,%
\begin{multline*}
f\left(  \left\Vert y-q\right\Vert _{p}\right)  +f\left(  \left\Vert
z-r\right\Vert _{p}\right)  \leq f\left(  \left\Vert z-q\right\Vert
_{p}\right)  +f\left(  \left\Vert y-r\right\Vert _{p}\right) \\
+\tilde{C}\left(  p\right)  f\left(  \left\Vert y-z\right\Vert _{p}\right)
+\tilde{C}\left(  p\right)  f\left(  \left\Vert r-q\right\Vert _{p}\right)
\end{multline*}
for all \thinspace$q,r,y,z$ in an $L^{p}$ space $($or in a Schatten space
$S^{p}(H)).$
\end{theorem}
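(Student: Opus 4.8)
The plan is to mimic exactly the proof structure used for Theorem~\ref{thm_functional-Par-law} and Theorem~\ref{thm_Schotz_gen}, replacing the substitute parallelogram law~(\ref{genParLaw}) by the optimal $2$-uniform convexity inequality of Proposition~\ref{prop2-unifconv}. First I would establish the first displayed inequality. Let $x,y$ belong to one of the listed spaces and set $p\in[2,\infty)$. Since $p\geq2$, Proposition~\ref{prop2-unifconv} (reversed form) gives
\[
\left\Vert \frac{x+y}{2}\right\Vert_{p}^{2}+(p-1)\left\Vert \frac{x-y}{2}\right\Vert_{p}^{2}\leq\frac{\left\Vert x\right\Vert_{p}^{2}+\left\Vert y\right\Vert_{p}^{2}}{2},
\]
equivalently $2\left\Vert \frac{x+y}{2}\right\Vert_{p}^{2}+2(p-1)\left\Vert \frac{x-y}{2}\right\Vert_{p}^{2}\leq\left\Vert x\right\Vert_{p}^{2}+\left\Vert y\right\Vert_{p}^{2}$. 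The idea is to apply Corollary~\ref{corHLP2} to the nondecreasing concave function $g=f\circ\sqrt{\cdot}$ (concavity of $g$ being furnished by Theorem~\ref{thm-comp}, with $\alpha=1/2$) and to the strings
\[
x_{1}=x_{2}=\left\Vert \tfrac{x+y}{2}\right\Vert_{p}^{2},\quad x_{3}=\cdots=x_{2+2\tilde C(p)}=\left\Vert \tfrac{x-y}{2}\right\Vert_{p}^{2},\quad y_{1}=\left\Vert x\right\Vert_{p}^{2},\ y_{2}=\left\Vert y\right\Vert_{p}^{2}.
\]
The max-coordinate hypothesis of Corollary~\ref{corHLP2} holds because $\left\Vert \frac{x\pm y}{2}\right\Vert_{p}\leq\frac{\left\Vert x\right\Vert_{p}+\left\Vert y\right\Vert_{p}}{2}\leq\max\{\left\Vert x\right\Vert_{p},\left\Vert y\right\Vert_{p}\}$ by the triangle inequality. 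The sum hypothesis $\sum x_{i}\geq y_{1}+y_{2}$ needs $2\left\Vert \frac{x+y}{2}\right\Vert_{p}^{2}+2\tilde C(p)\left\Vert \frac{x-y}{2}\right\Vert_{p}^{2}\geq\left\Vert x\right\Vert_{p}^{2}+\left\Vert y\right\Vert_{p}^{2}$, which follows from the reversed Proposition~\ref{prop2-unifconv} together with the fact that $\tilde C(p)\geq(p-1)/2$ by definition of $\tilde C(p)$ (and $g$ nondecreasing, so enlarging the multiplicity only increases the left side). This yields the first inequality.

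Next I would derive the second displayed inequality from the first by the same convexity trick as in Theorem~\ref{thm_functional-Par-law}. Since $f$ is convex and nondecreasing, $f\circ\left\Vert \cdot\right\Vert_{p}$ is convex, so for arbitrary $u$,
\[
2f\left(\left\Vert \tfrac{x+y}{2}\right\Vert_{p}\right)\leq f\left(\left\Vert v\right\Vert_{p}\right)+f\left(\left\Vert x+y+v\right\Vert_{p}\right),\qquad 2f\left(\left\Vert \tfrac{x-y}{2}\right\Vert_{p}\right)\leq f\left(\left\Vert x+u\right\Vert_{p}\right)+f\left(\left\Vert y+u\right\Vert_{p}\right),
\]
where in the first one takes the midpoint of $x+y+v$ and $-v$, and in the second the midpoint of $x+u$ and $-(y+u)$. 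Substituting these two estimates into the first inequality gives the second. Finally, the quadruple consequence follows by the substitution used in the proof of Theorem~\ref{thm_Schotz_gen}: writing $y-z$ and $r-q$ in place of the ``$x+u$, $y+u$'' slots and $z-q$, $y-r$ in place of the ``$v$, $x+y+v$'' slots; explicitly, put $x=y-z$, $y'=r-q$ (renaming to avoid clash), $u=z-y$, $v=z-q$, and check $x+y'+v=y-r$ and $x+u=z-q$, $y'+u=r-q+z-y$, adjusting signs using $\left\Vert -w\right\Vert_{p}=\left\Vert w\right\Vert_{p}$ so that the four norms on the right become $\left\Vert z-q\right\Vert_{p}$, $\left\Vert y-r\right\Vert_{p}$, $\left\Vert y-z\right\Vert_{p}$, $\left\Vert r-q\right\Vert_{p}$.

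The main obstacle is bookkeeping rather than substance: one must confirm that the floor/ceiling truncation $\tilde C(p)$ is exactly what Corollary~\ref{corHLP2} permits --- that is, that replacing the genuine coefficient $p-1$ by $2\tilde C(p)$ preserves the needed inequality $\sum x_{i}\geq y_{1}+y_{2}$ while keeping the number of repeated $x$-coordinates an integer --- and to get the affine substitution in the quadruple step to land on precisely the stated four distances with correct signs. Both Schatten and non-commutative $L^{p}$ cases are covered verbatim because Proposition~\ref{prop2-unifconv}, as noted after its statement (Pisier--Xu), holds in all three settings, so no separate argument is required. One should also invoke Popoviciu's approximation theorem (Theorem~\ref{thm_Pop35}) together with Remark~\ref{rem_permanence prop} at the outset to reduce to the case where $f$ is of class $C^{3}$, so that Theorem~\ref{thm-comp} applies cleanly to give the concavity of $f\circ\sqrt{\cdot}$; by the permanence properties the resulting inequality then passes to all nondecreasing, convex, $3$-concave $f$ with $f(0)=0$.
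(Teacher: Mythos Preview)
Your approach is correct and is exactly the one the paper intends: apply Corollary~\ref{corHLP2} to $g=f\circ\sqrt{\cdot}$ using the reversed optimal $2$-uniform convexity inequality in place of~(\ref{genParLaw}), then deduce the second and third inequalities by the convexity-of-$f\circ\Vert\cdot\Vert_p$ trick and the affine substitution of Theorem~\ref{thm_Schotz_gen}.

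Two slips to fix. First, your displayed ``reversed form'' is written with the wrong sign: for $p\geq 2$ Proposition~\ref{prop2-unifconv} gives
\[
\left\Vert x\right\Vert_p^2+\left\Vert y\right\Vert_p^2\;\leq\;2\left\Vert\tfrac{x+y}{2}\right\Vert_p^2+2(p-1)\left\Vert\tfrac{x-y}{2}\right\Vert_p^2,
\]
which is precisely what you use (correctly) two lines later to verify $\sum x_i\geq y_1+y_2$; just correct the earlier display. Second, your explicit quadruple substitution does not land on the stated distances: with your choice $x=y-z$ one gets $f(\Vert x\Vert_p)=f(\Vert y-z\Vert_p)$ on the \emph{left}, not the required $f(\Vert y-q\Vert_p)$. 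A substitution that works is
\[
x=y-q,\quad y'=z-r,\quad u=v=q-z,
\]
for which $\Vert v\Vert=\Vert z-q\Vert$, $x+y'+v=y-r$, $x+u=y-z$, and $y'+u=q-r$, giving exactly the four right-hand distances after using $\Vert-w\Vert_p=\Vert w\Vert_p$. With these corrections your write-up is complete; the appeal to Theorem~\ref{thm_Pop35} and Remark~\ref{rem_permanence prop} to reduce to smooth $f$ is unnecessary since Theorem~\ref{thm-comp} already handles continuous $3$-concave $f$ directly, but it does no harm.
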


\section{Further comments}

Not everything can be obtained via majorization theory.

Here is an example based on Fréchet's identity (see \cite{Fr1935}),%
\begin{equation}
\left\Vert \mathbf{x}\right\Vert ^{2}+\left\Vert \mathbf{y}\right\Vert
^{2}+\left\Vert \mathbf{z}\right\Vert ^{2}+\left\Vert \mathbf{x}%
+\mathbf{y}+\mathbf{z}\right\Vert ^{2}=\left\Vert \mathbf{x}+\mathbf{y}%
\right\Vert ^{2}+\left\Vert \mathbf{y}+\mathbf{z}\right\Vert ^{2}+\left\Vert
\mathbf{z}+\mathbf{x}\right\Vert ^{2}, \tag{$Fr$}\label{Fr}%
\end{equation}
valid in any Euclidean space $\mathbb{R}^{N}$. As Ressel noticed in
\cite{Res}, Theorem 2, this identity implies the Hornich-Hlawka inequality in
$\mathbb{R}^{N}$,
\begin{equation}
\left\Vert \mathbf{x}\right\Vert +\left\Vert \mathbf{y}\right\Vert +\left\Vert
\mathbf{z}\right\Vert +\left\Vert \mathbf{x}+\mathbf{y}+\mathbf{z}\right\Vert
\geq\left\Vert \mathbf{x}+\mathbf{y}\right\Vert +\left\Vert \mathbf{y}%
+\mathbf{z}\right\Vert +\left\Vert \mathbf{z}+\mathbf{x}\right\Vert ,
\tag{$HH$}\label{HH}%
\end{equation}
which in turn yields
\begin{multline*}
\left\Vert \mathbf{x}\right\Vert ^{1/2^{n}}+\left\Vert \mathbf{y}\right\Vert
^{1/2^{n}}+\left\Vert \mathbf{z}\right\Vert ^{1/2^{n}}+\left\Vert
\mathbf{x}+\mathbf{y}+\mathbf{z}\right\Vert ^{1/2^{n}}\\
\geq\left\Vert \mathbf{x}+\mathbf{y}\right\Vert ^{1/2^{n}}+\left\Vert
\mathbf{y}+\mathbf{z}\right\Vert ^{1/2^{n}}+\left\Vert \mathbf{z}%
+\mathbf{x}\right\Vert ^{1/2^{n}}%
\end{multline*}
for all $\mathbf{x},\mathbf{y},\mathbf{z}\in\mathbb{R}^{N}$ and all integers
$n=1,2,3,...$ .

One may think of the Hornich-Hlawka inequality as a particular case of a
functional inequality attached to Fréchet's identity and including the square
root function in its domain. So far, no such functional inequality is known,
despite a great deal of attention received by the Hornich-Hlawka inequality
over the years. See \cite{MN2024} for details.

To understand why the majorization theorem cannot be helpful for proving the
implication $(Fr)$ $\Longrightarrow(HH),$ let's restrict ourselves to the
positive cone $\mathbb{R}_{+}^{N}.$ In this case one can add to Fréchet's
identity the inequalities%
\begin{align*}
\max\left\{  \left\Vert \mathbf{x}\right\Vert ^{2},\left\Vert \mathbf{y}%
\right\Vert ^{2},\left\Vert \mathbf{z}\right\Vert ^{2},\left\Vert
\mathbf{x}+\mathbf{y}+\mathbf{z}\right\Vert ^{2}\right\}   &  \geq\max\left\{
\left\Vert \mathbf{x}+\mathbf{y}\right\Vert ^{2},\left\Vert \mathbf{y}%
+\mathbf{z}\right\Vert ^{2},\left\Vert \mathbf{z}+\mathbf{x}\right\Vert
^{2}\right\}  ,\\
\left\Vert \mathbf{z}\right\Vert ^{2}+\left\Vert \mathbf{x}+\mathbf{y}%
+\mathbf{z}\right\Vert ^{2}  &  \geq\left\Vert \mathbf{y}+\mathbf{z}%
\right\Vert ^{2}+\left\Vert \mathbf{z}+\mathbf{x}\right\Vert ^{2},
\end{align*}
and the similar ones obtained by permuting the variables. It becomes clear
that none of the strings
\[
X=(\left\Vert \mathbf{x}\right\Vert ^{2},\left\Vert \mathbf{y}\right\Vert
^{2},\left\Vert \mathbf{z}\right\Vert ^{2},\left\Vert \mathbf{x}%
+\mathbf{y}+\mathbf{z}\right\Vert ^{2})
\]
and
\[
Y=\left(  \left\Vert \mathbf{x}+\mathbf{y}\right\Vert ^{2},\left\Vert
\mathbf{y}+\mathbf{z}\right\Vert ^{2},\left\Vert \mathbf{z}+\mathbf{x}%
\right\Vert ^{2},0\right)
\]
is majorized by the other one in the sense of Hardy, Littlewood and Pólya.

However, some particular results may still be obtained.

For example, from Corollary \ref{corHLP2} one can deduce the following fact.

\begin{theorem}
\label{prop_Fr_functional}For every\ nondecreasing and concave function
$f:[0,\infty)\rightarrow\mathbb{R}$ and every $\mathbf{x},\mathbf{y}%
,\mathbf{z\in}\mathbb{R}_{+}^{N}.$
\begin{multline*}
f\left(  \left\Vert \mathbf{x}+\mathbf{y}\right\Vert ^{2}\right)  +f\left(
\left\Vert \mathbf{y}+\mathbf{z}\right\Vert ^{2}\right)  +f\left(  \left\Vert
\mathbf{z}+\mathbf{x}\right\Vert ^{2}\right) \\
\geq f\left(  \left\Vert \mathbf{x}\right\Vert ^{2}+\left\Vert \mathbf{y}%
\right\Vert ^{2}+\left\Vert \mathbf{z}\right\Vert ^{2}\right)  +f\left(
\left\Vert \mathbf{x}+\mathbf{y}+\mathbf{z}\right\Vert ^{2}\right)  +f(0).
\end{multline*}
The particular case where $f$ is the square root function gives rise to an
inequality that complements the Hornich-Hlawka inequality:
\begin{equation}
\left\Vert \mathbf{x}+\mathbf{y}\right\Vert +\left\Vert \mathbf{y}%
+\mathbf{z}\right\Vert +\left\Vert \mathbf{z}+\mathbf{x}\right\Vert
\geq\left(  \left\Vert \mathbf{x}\right\Vert ^{2}+\left\Vert \mathbf{y}%
\right\Vert ^{2}+\left\Vert \mathbf{z}\right\Vert ^{2}\right)  ^{1/2}%
+\left\Vert \mathbf{x}+\mathbf{y}+\mathbf{z}\right\Vert \label{RevHH}%
\end{equation}
for every $\mathbf{x},\mathbf{y},\mathbf{z\in}\mathbb{R}_{+}^{N}$.
\end{theorem}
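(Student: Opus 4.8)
The plan is to apply Corollary \ref{corHLP2} with $n=3$ to the given nondecreasing concave function $f$ and the nonnegative numbers
\[
x_{1}=\left\Vert \mathbf{x}+\mathbf{y}\right\Vert ^{2},\quad x_{2}=\left\Vert \mathbf{y}+\mathbf{z}\right\Vert ^{2},\quad x_{3}=\left\Vert \mathbf{z}+\mathbf{x}\right\Vert ^{2},
\]
\[
y_{1}=\left\Vert \mathbf{x}\right\Vert ^{2}+\left\Vert \mathbf{y}\right\Vert ^{2}+\left\Vert \mathbf{z}\right\Vert ^{2},\quad y_{2}=\left\Vert \mathbf{x}+\mathbf{y}+\mathbf{z}\right\Vert ^{2}.
\]
With these choices the conclusion of Corollary \ref{corHLP2}, namely $\sum_{k=1}^{3}f(x_{k})\geq f(y_{1})+f(y_{2})+(3-2)f(0)$, is word for word the first displayed inequality of the statement. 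Hence the entire proof reduces to verifying the two hypotheses of that corollary for this data.

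The sum condition $x_{1}+x_{2}+x_{3}\geq y_{1}+y_{2}$ is exactly where Fr\'echet's identity (\ref{Fr}) enters: valid in the Euclidean space $\mathbb{R}^{N}$, it asserts precisely that $x_{1}+x_{2}+x_{3}=y_{1}+y_{2}$, so the required inequality holds \emph{with equality} and no estimation is needed. The maximum condition $\max\{x_{1},x_{2},x_{3}\}\leq\max\{y_{1},y_{2}\}$ is where the restriction to the positive cone $\mathbb{R}_{+}^{N}$ becomes essential. Indeed, for $\mathbf{u},\mathbf{w}\in\mathbb{R}_{+}^{N}$ one has $\left\Vert \mathbf{u}+\mathbf{w}\right\Vert ^{2}=\sum_{i}(u_{i}+w_{i})^{2}\geq\sum_{i}u_{i}^{2}=\left\Vert \mathbf{u}\right\Vert ^{2}$, so the Euclidean norm is monotone along the cone. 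Applying this with $\mathbf{u}$ ranging over $\mathbf{x}+\mathbf{y},\ \mathbf{y}+\mathbf{z},\ \mathbf{z}+\mathbf{x}$ and $\mathbf{w}$ the omitted nonnegative summand gives $x_{1},x_{2},x_{3}\leq\left\Vert \mathbf{x}+\mathbf{y}+\mathbf{z}\right\Vert ^{2}=y_{2}\leq\max\{y_{1},y_{2}\}$, which is the first of the two inequalities displayed just before the statement.

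Both hypotheses being verified, Corollary \ref{corHLP2} yields the first asserted inequality for every nondecreasing concave $f$. To obtain (\ref{RevHH}) I would then specialize to the square root function $f(t)=\sqrt{t}$, which is nondecreasing and concave on $[0,\infty)$ and satisfies $f(0)=0$; each term $f(\left\Vert \mathbf{u}\right\Vert ^{2})$ collapses to $\left\Vert \mathbf{u}\right\Vert$, the term $f(y_{1})$ becomes $\left(\left\Vert \mathbf{x}\right\Vert ^{2}+\left\Vert \mathbf{y}\right\Vert ^{2}+\left\Vert \mathbf{z}\right\Vert ^{2}\right)^{1/2}$, and the $f(0)$ summand vanishes, producing exactly (\ref{RevHH}).

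There is no genuine analytic obstacle here; the result is a direct application once it is cast in the $x_{i},y_{j}$ language above. The only point deserving emphasis is that both conditions of Corollary \ref{corHLP2} must hold \emph{simultaneously}, and this is a delicate interplay: the sum condition is forced (as an equality) by the linear-algebraic identity (\ref{Fr}), while the maximum condition is forced by the order structure of the cone. It is precisely this maximum condition that points in the favorable direction only because the dominant term $y_{2}=\left\Vert \mathbf{x}+\mathbf{y}+\mathbf{z}\right\Vert ^{2}$ sits on the right-hand side of the majorization; the same machinery cannot be turned to prove the Hornich--Hlawka inequality (\ref{HH}) itself, consistent with the discussion preceding the statement.
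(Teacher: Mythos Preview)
Your proposal is correct and matches the paper's approach exactly: the paper simply states that the result follows from Corollary \ref{corHLP2}, and you have supplied precisely the right choice of $x_{1},x_{2},x_{3},y_{1},y_{2}$ together with the verification of both hypotheses (Fr\'echet's identity for the sum, monotonicity of the norm on $\mathbb{R}_{+}^{N}$ for the maximum). Your closing remark about why the same device cannot yield (\ref{HH}) also aligns with the paper's surrounding discussion.
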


Notice that the inequality (\ref{RevHH}) does not work for arbitrary elements
of $\mathbb{R}^{N}.$

The square norm is strongly superadditive on the positive cone of
$\mathbb{R}^{N},$ that is,
\[
\left\Vert \mathbf{x}+\mathbf{y}+\mathbf{z}\right\Vert ^{2}+\left\Vert
z\right\Vert ^{2}\geq\left\Vert \mathbf{x}+\mathbf{z}\right\Vert
^{2}+\left\Vert \mathbf{y}+\mathbf{z}\right\Vert ^{2}%
\]
for every $\mathbf{x},\mathbf{y},\mathbf{z\in}\mathbb{R}_{+}^{N}$. By
proceeding as in the case of Theorem \ref{thm_Zhang} one obtains the following
Popoviciu like inequality.

\begin{proposition}
\label{prop_Pop_functional}For every $\mathbf{x},\mathbf{y},\mathbf{z\in
}\mathbb{R}_{+}^{N}$ and every nondecreasing function $f:[0,\infty
)\rightarrow\mathbb{R}$ such that $f(0)=0$ and $f(x^{1/2})$ is convex, we have%
\begin{multline*}
\frac{f\left(  \left\Vert x\right\Vert \right)  +f\left(  \left\Vert
y\right\Vert \right)  +f\left(  \left\Vert z\right\Vert \right)  }{3}+f\left(
\left\Vert \mathbf{x}+\mathbf{y}+\mathbf{z}\right\Vert \right) \\
\geq\frac{2}{3}f\left(  \left\Vert \mathbf{x}+\mathbf{y}\right\Vert \right)
+f\left(  \left\Vert \mathbf{y}+\mathbf{z}\right\Vert \right)  +f\left(
\left\Vert \mathbf{z}+\mathbf{x}\right\Vert \right)  .
\end{multline*}

\end{proposition}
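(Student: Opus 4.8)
The plan is to imitate the proof of Theorem~\ref{thm_Zhang}, replacing the determinant by the map $\mathbf{v}\mapsto\left\Vert \mathbf{v}\right\Vert ^{2}$ restricted to the positive cone and replacing Zhang's determinantal inequality by the strong superadditivity of the squared norm recorded just before the statement. First I would collect the elementary facts needed to apply Corollary~\ref{cor_HLP1}. Besides the superadditivity
\[
\left\Vert \mathbf{x}+\mathbf{y}+\mathbf{z}\right\Vert ^{2}+\left\Vert \mathbf{z}\right\Vert ^{2}\geq\left\Vert \mathbf{x}+\mathbf{z}\right\Vert ^{2}+\left\Vert \mathbf{y}+\mathbf{z}\right\Vert ^{2},
\]
which for $\mathbf{x},\mathbf{y},\mathbf{z}\in\mathbb{R}_{+}^{N}$ amounts to $2\langle\mathbf{x},\mathbf{y}\rangle\geq0$ after expanding the squares, I would note that $\left\Vert \mathbf{x}+\mathbf{z}\right\Vert ^{2}\leq\left\Vert \mathbf{x}+\mathbf{y}+\mathbf{z}\right\Vert ^{2}$ and $\left\Vert \mathbf{y}+\mathbf{z}\right\Vert ^{2}\leq\left\Vert \mathbf{x}+\mathbf{y}+\mathbf{z}\right\Vert ^{2}$, since adding a vector of the positive cone only increases the inner products occurring in the expansion. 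Consequently the numbers $x_{1}=\left\Vert \mathbf{x}+\mathbf{z}\right\Vert ^{2}$, $x_{2}=\left\Vert \mathbf{y}+\mathbf{z}\right\Vert ^{2}$, $y_{1}=\left\Vert \mathbf{x}+\mathbf{y}+\mathbf{z}\right\Vert ^{2}$ and $y_{2}=\left\Vert \mathbf{z}\right\Vert ^{2}$ satisfy $\max\{x_{1},x_{2}\}\leq\max\{y_{1},y_{2}\}$ and $x_{1}+x_{2}\leq y_{1}+y_{2}$.

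Then I would apply Corollary~\ref{cor_HLP1} with $n=2$ to the function $g(t)=f(t^{1/2})$, which is nondecreasing and convex by hypothesis and satisfies $g(0)=f(0)=0$; this yields
\[
f\left(  \left\Vert \mathbf{x}+\mathbf{z}\right\Vert \right)  +f\left(  \left\Vert \mathbf{y}+\mathbf{z}\right\Vert \right)  \leq f\left(  \left\Vert \mathbf{x}+\mathbf{y}+\mathbf{z}\right\Vert \right)  +f\left(  \left\Vert \mathbf{z}\right\Vert \right)  .
\]
Since the superadditivity relation is symmetric in the two variables other than the distinguished one, and any of $\mathbf{x},\mathbf{y},\mathbf{z}$ may play the distinguished role, the same argument gives the two companions obtained by exchanging $\mathbf{z}$ with $\mathbf{x}$, respectively with $\mathbf{y}$. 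Adding the three inequalities side by side, the term $f(\left\Vert \mathbf{x}+\mathbf{y}+\mathbf{z}\right\Vert )$ appears three times on the left and each of $f(\left\Vert \mathbf{x}\right\Vert )$, $f(\left\Vert \mathbf{y}\right\Vert )$, $f(\left\Vert \mathbf{z}\right\Vert )$ once, while each of $f(\left\Vert \mathbf{x}+\mathbf{y}\right\Vert )$, $f(\left\Vert \mathbf{y}+\mathbf{z}\right\Vert )$, $f(\left\Vert \mathbf{z}+\mathbf{x}\right\Vert )$ appears exactly twice on the right; dividing by $3$ gives the stated inequality.

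I do not anticipate a real difficulty, the argument being entirely parallel to that of Theorem~\ref{thm_Zhang}. The only slightly delicate point is the verification of the $\max$-hypothesis of Corollary~\ref{cor_HLP1}, which is precisely where the restriction to the positive cone $\mathbb{R}_{+}^{N}$ is used (monotonicity of the norm under adding a nonnegative vector); the remainder is the bookkeeping of the symmetrization step.
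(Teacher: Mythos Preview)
Your argument is correct and follows exactly the approach indicated in the paper, which simply says ``by proceeding as in the case of Theorem~\ref{thm_Zhang}'': apply Corollary~\ref{cor_HLP1} to $g(t)=f(t^{1/2})$ using the strong superadditivity of the squared norm on $\mathbb{R}_{+}^{N}$ together with the monotonicity $\left\Vert \mathbf{x}+\mathbf{z}\right\Vert ^{2},\left\Vert \mathbf{y}+\mathbf{z}\right\Vert ^{2}\leq\left\Vert \mathbf{x}+\mathbf{y}+\mathbf{z}\right\Vert ^{2}$, and then symmetrize. One trivial slip: in your bookkeeping sentence you swapped ``left'' and ``right'' relative to the displayed inequality (in your $\leq$-form the pairwise terms are on the left and $f(\left\Vert \mathbf{x}+\mathbf{y}+\mathbf{z}\right\Vert)$ on the right), but the count and the final conclusion are correct.
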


Similarly, starting from\ Serre's determinantal inequality\textbf{
}\cite{Serre},%
\begin{multline*}
\det\nolimits^{1/2}A+\det\nolimits^{1/2}B+\det\nolimits^{1/2}C+\det
\nolimits^{1/2}(A+B+C)\\
\leq\det\nolimits^{1/2}\left(  A+B\right)  +\det\nolimits^{1/2}\left(
B+C\right)  +\det\nolimits^{1/2}\left(  C+A\right)  ,
\end{multline*}
valid for all triplets $A,B,C$ of positive semidefinite matrices, one obtains
from Corollary \ref{corHLP2} the functional inequality%
\begin{multline*}
f\left(  \det\nolimits^{1/2}\left(  A+B\right)  \right)  +f\left(
\det\nolimits^{1/2}\left(  B+C\right)  \right)  +f\left(  \det\nolimits^{1/2}%
\left(  C+A\right)  \right) \\
\geq f\left(  \det\nolimits^{1/2}A+\det\nolimits^{1/2}B+\det\nolimits^{1/2}%
C\right)  +f\left(  \det\nolimits^{1/2}(A+B+C)\right)  ,
\end{multline*}
for every nonnegative,\ nondecreasing and concave function $f:[0,\infty
)\rightarrow\mathbb{R}.$

We leave open the problem whether Propositions \ref{prop_Fr_functional} and
\ref{prop_Pop_functional} still work for all triplets of vectors in
$\mathbb{R}^{N}.$

\medskip

\noindent\textbf{\noindent Acknowledgement. }The author would like to thank
\c{S}tefan Cobza\c{s}, Dan-\c{S}tefan Marinescu and Christof Schötz for many
useful comments on the subject of this paper.


\begin{thebibliography}{99}                                                                                               %


\bibitem {BCL1994}Ball, K., Carlen, E.A., Lieb, E.H.: Sharp uniform convexity
and smoothness inequalities for trace norms. Invent. Math. \textbf{115}(1),
463--482 (1994).

\bibitem {BNP2010}Bencze, M., Niculescu, C.P., Popovici, F.: Popoviciu's
inequality for functions of several variables. J. Math. Anal. Appl.
\textbf{365}(1), 399--409 (2010).

\bibitem {CN2014}Choudary, A.D.R., Niculescu, C.P.: Real Analysis on
Intervals. Springer India (2014).

\bibitem {Cl1936}Clarkson, J.A.: Uniformly convex spaces. Trans. Amer. Math.
Soc. \textbf{40}(3) 396-414 (1936).

\bibitem {Cl1937}Clarkson, J.A.: The von Neumann-Jordan constant for the
Lebesgue space. Ann. of Math. \textbf{38}(1), 114--115 (1937).

\bibitem {Enflo1970}Enflo, P.: Uniform structures and square roots in
topological groups. II. Israel J. Math. \textbf{8}, 253--272 (1970).

\bibitem {Fr1935}Fréchet, M.: Sur la définition axiomatique d'une classe
d'espaces vectoriels distanciés applicables vectoriellement sur l'espace de
Hilbert. Ann. of Math. \textbf{36}, 705--718 (1935).

\bibitem {Han56}Hanner, O.: On the uniform convexity of $L^{p}$ and $\ell^{p}%
$, Ark. Mat. \textbf{3}(3), 239--244 (1956).

\bibitem {Hopf}Hopf, E.: Über die Zusammenhänge zwischen gewissen höheren
Differenzenquotienten reeller Funktionen einer reellen Variablen und deren
Differenzierbarkeitseigenschaften. Dissertation, Univ. Berlin (1926).

\bibitem {JN}Jordan, P., von Neumann, J.: On inner products in linear metric
spaces. Ann. of Math. \textbf{36}, 719--723 (1935).

\bibitem {KM96}Kato, M., Miyazaki, K.: On generalized Clarkson's inequalities
for $L_{p}(\mu;L_{q}(\nu))$ and Sobolev spaces. Math. Japon. \textbf{43},
505-515 (1996).

\bibitem {KT98}Kato, M., Takahashi, Y.: Von Neumann--Jordan constant for
Lebesgue--Bochner spaces. J. Inequal. Appl. \textbf{2}(1), 89-97 (1998).

\bibitem {KT2002}Kato, M., Takahashi, Y.: On optimal 2-uniform convexity
inequalities. Nonlinear Analysis and Convex Analysis 1298, 65-69 (2002).

\bibitem {MN2023}Marinescu, D-\c{S}., Niculescu, C.P.: Old and new on the
$3$-convex functions. Math. Inequal. Appl. \textbf{26}(4), 911--933 (2023).

\bibitem {MN2024}Marinescu, D-\c{S}., Niculescu, C.P.: A survey of the
Hornich-Hlawka inequality. Preprint arXiv http://arxiv.org/abs/2407.03278 (2024).

\bibitem {McC1967}McCarthy, J.: $\mathcal{C}_{p}$. Israel J. Math. \textbf{5},
249-271 (1967).

\bibitem {N2021}Niculescu, C.P.: A new look at the Hardy-Littlewood-Polya
inequality of majorization. J. Math. Anal. Appl. \textbf{501}(2), article no.
125211 (2021).

\bibitem {NP2024}Niculescu, C.P., Persson, L.- E.: Convex Functions and their
Applications. A Contemporary Approach. (Third edition) CMS Books in
Mathematics, Springer (2024).

\bibitem {NS2023}Niculescu, C.P., Sra, S.: Functional inequalities for
functions with positive differences on convex cones. Math. Results 78(6),
article no. 217 (2023).

\bibitem {PX}Pisier, G., Xu, Q.: Non-commutative Lp-spaces. In Handbook of the
geometry of Banach spaces 2, 1459-1517. North-Holland, Amsterdam (2003).

\bibitem {Pol1950}Pólya, G.: Remark on Weyl's note "Inequalities between the
two kinds of eigenvalues of a linear transformation", Proc. Natl. Acad. Sci.
U.S.A. \textbf{36}, 49--51 (1950).

\bibitem {Pop34}Popoviciu, T.: Sur quelques propriétés des fonctions d'une ou
de deux variables réelles. Mathematica (Cluj) \textbf{8}, 1-85 (1934).

\bibitem {Pop35}Popoviciu, T.: Sur l'approximation des fonctions convexes
d'ordre supérieur. Mathematica (Cluj) \textbf{10}, 49-54 (1935).

\bibitem {Pop44}Popoviciu, T.: Les Fonctions Convexes. Hermann Cie. Editeurs,
Paris (1944).

\bibitem {Pop1965}Popoviciu, T.: Sur certaines inégalités qui caractérisent
les fonctions convexes. Analele \c{s}tiin\c{t}ifice Univ. "Al. I. Cuza" Iasi,
Sec\c{t}ia I-a Mat. \textbf{11}, 155--164 (1965).

\bibitem {Res}Ressel, P.: The Hornich--Hlawka inequality and Bernstein
functions. J. Math. Inequal. \textbf{9}, 883--888 (2015).

\bibitem {SSV}Schilling, R.L., Song, R., Vondra\v{c}ek, Z.: Bernstein
Functions. Theory and Applications. De Gruyter, Berlin (2010).

\bibitem {Sch}Schötz, C.: Quadruple inequalities: Between Cauchy-Schwarz and
triangle. ArXiv preprint, arXiv:2307.01361 v.3 (2024).

\bibitem {Serre}Serre, D.: The reverse Hlawka inequality in a Minkowski space.
C. R. Math. Acad. Sci. Paris \textbf{353}(7), 629--633 (2015).

\bibitem {Simon}Simon, B.: Trace ideals and their applications. Second
edition. Mathematical Surveys and Monographs vol. \textbf{120}, American
Mathematical Society, Providence, RI (2005).

\bibitem {T1949}Tomi\'{c}, M.: Théorème de Gauss relatif au centre de gravité
et son application. Bull. Soc. Math. Phys. Serbie \textbf{1}, 31--40 (1949)

\bibitem {W1949}Weyl, H.: Inequalities between two kinds of eigenvalues of a
linear transformation. Proc. Nat. Acad. Sci. U. S. A. \textbf{35}, 408--411 (1949)

\bibitem {Zhang}Zhang, F.: Matrix Theory: Basic Results and Techniques. 2nd
ed. Springer, New York (2011).
\end{thebibliography}
\end{document}